\documentclass[12pt]{article}
\usepackage[latin9]{inputenc}
\pagestyle{plain}
\usepackage{amsthm}
\usepackage{amsmath}
\usepackage{amssymb}
\usepackage{graphicx}

\makeatletter
\usepackage{enumitem}		
\theoremstyle{plain}
\newtheorem{thm}{\protect\theoremname}
\theoremstyle{plain}
\newtheorem{cor}[thm]{\protect\corollaryname}
\theoremstyle{definition}
\newtheorem{example}[thm]{\protect\examplename}
\theoremstyle{definition}
\newtheorem{defn}[thm]{\protect\definitionname}
\theoremstyle{plain}
\newtheorem{lem}[thm]{\protect\lemmaname}
\theoremstyle{plain}
\newtheorem{prop}[thm]{\protect\propositionname}
\theoremstyle{plain}
\newtheorem{conj}[thm]{\protect\conjecturename}

\theoremstyle{definition}
\newtheorem{claim}{\protect\claimname}
\newtheorem*{claim*}{\protect\claimname}


\usepackage{color}\usepackage{fullpage}\usepackage{epsf}\usepackage{amsfonts}\usepackage{amsthm}\usepackage{color}\usepackage{transparent}

\DeclareMathOperator{\E}{\mathbb{E}}

\usepackage{pdfsync}

\makeatother

\providecommand{\corollaryname}{Corollary}
\providecommand{\definitionname}{Definition}
\providecommand{\examplename}{Example}
\providecommand{\lemmaname}{Lemma}
\providecommand{\propositionname}{Proposition}
\providecommand{\theoremname}{Theorem}
\providecommand{\claimname}{Claim}
\providecommand{\conjecturename}{Conjecture}

%
%
%
%

\begin{document}
\title{On directed versions of the Corrádi-Hajnal Corollary}

\author{Andrzej Czygrinow\thanks{School of Mathematical Sciences and Statistics, Arizona
State University, Tempe, AZ 85287, USA. E-mail address: aczygri@asu.edu.}
\and
H. A. Kierstead
\thanks{School of Mathematical Sciences and Statistics, Arizona
State University, Tempe, AZ 85287, USA. E-mail address: kierstead@asu.edu.
Research of this author is supported in part by NSA grant H98230-12-1-0212.}
\and
Theodore Molla\thanks{School of Mathematical Sciences and Statistics, Arizona
State University, Tempe, AZ 85287, USA. E-mail address: tmolla@asu.edu.
Research of this author is supported in part by NSA grant H98230-12-1-0212.}
}

\maketitle
\begin{abstract}
For $k\in \mathbb N$, Corr\'adi and Hajnal proved that every graph $G$ on $3k$ vertices with minimum degree $\delta(G)\ge2k$ has a $C_3$-factor, i.e., a partitioning of the vertex set so that each part induces the $3$-cycle $C_3$. Wang proved that  every directed graph $\overrightarrow G$ on $3k$ vertices with minimum total degree $\delta_t(\overrightarrow G):=\min_{v\in V}(deg^-(v)+deg^+(v))\ge   3(3k-1)/2$
 has a $\overrightarrow C_3$-factor, where $\overrightarrow C_3$ is the directed $3$-cycle.  The degree bound in Wang's result is tight. However,
our main result implies that for all integers $a\ge1$ and $b\ge0$ with $a+b=k$, every directed graph $\overrightarrow G$ on $3k$ vertices with minimum total degree $\delta_t(\overrightarrow G)\ge   4k-1$ has a factor consisting of $a$  copies of $\overrightarrow T_3$ and $b$ copies of 
 $ \overrightarrow C_3$, where $\overrightarrow T_3$ is the transitive tournament on three vertices. In particular, using $b=0$, there is a $\overrightarrow T_3$-factor of $\overrightarrow G $, and using $a=1$, it is possible to obtain a $\overrightarrow C_3$-factor of $\overrightarrow G$ by reversing just one edge of $\overrightarrow G$. All these results are phrased and proved more generally in terms of undirected multigraphs. 

We conjecture that every directed graph $\overrightarrow G$ on $3k$ vertices with minimum semidegree $\delta_0(\overrightarrow G):=\min_{v\in V}\min(deg^-(v),deg^+(v))\ge2k$ has a $\overrightarrow C_3$-factor, and prove that this is asymptotically correct.
\end{abstract}

\section{Introduction}

For a graph $G=(V,E)$ set $|G|:=|V|$ and $\|G\|:=|E|$. Let $d(v)$ denote the degree of a vertex $v$, $\delta(G):=\min\{d(v):v\in V\}$ denote the \emph{minimum degree} of $G$, and $\sigma_{2}:=\min_{vw\notin E(G)}d(v)+d(w)$ denote the \emph{minimum Ore-degree} of $G$.  
Two subgraphs of $G$ are \emph{independent} if their vertex sets are disjoint.
In 1963 Corrádi and Hajnal \cite{corradi1963maximal} proved: 
\begin{thm}
\label{CHT}Every graph $G$ with $|G|\ge3k$ and 
$\delta(G)\geq2k$ contains $k$ independent cycles.
\end{thm}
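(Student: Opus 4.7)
My plan is to proceed by induction on $k$. The base case $k=1$ is immediate: any graph with $|G|\ge 3$ and $\delta(G)\ge 2$ contains a cycle (take a longest path $P$; its endpoint has all neighbors on $P$, yielding a cycle). For the inductive step, the naive strategy of removing a triangle and invoking induction on $G-V(T)$ fails, since a vertex outside $T$ may lose up to three neighbors but we can only afford to drop the minimum degree from $2k$ to $2(k-1)=2k-2$. So a more global, extremal approach is needed.

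The plan is to work with an extremal family of vertex-disjoint cycles. Choose a family $\mathcal{F}=\{C_1,\dots,C_s\}$ of pairwise vertex-disjoint cycles of $G$ that is first maximum in $s$, and second minimum in $|V(\mathcal{F})|:=\sum_i|C_i|$ among all maximum families. Suppose toward contradiction $s<k$, and set $H:=G-V(\mathcal{F})$, so $|H|\ge 3k-3s\ge 3$. By the maximality of $s$, $H$ is acyclic, hence a forest, and therefore contains a vertex $v$ with $d_H(v)\le 1$. Consequently $v$ has at least $2k-1\ge 2s+1$ neighbors in $V(\mathcal{F})$, and pigeonhole produces some cycle $C=C_i$ in which $v$ has at least three neighbors.

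The crux is the exchange step, which I expect to be the main obstacle. If $|C|\ge 4$, then among any three neighbors of $v$ on $C$ two are within arc-distance $\le |C|/3$, giving a cycle $C'$ through $v$ that is strictly shorter than $C$; replacing $C$ by $C'$ in $\mathcal{F}$ yields a maximum family of smaller total length, contradicting the second extremality. The hard case is when $C$ is a triangle. Here a single swap only trades $v$ for one of the triangle's vertices and does not produce an immediate contradiction, so one must argue more globally: track a second low-degree vertex $w$ in the forest $H$, perform simultaneous swaps on (possibly distinct) triangles, and exploit the degree condition on the displaced vertices to either obtain an $(s{+}1)$th disjoint cycle among the freed vertices, or else derive a structural contradiction with $\delta(G)\ge 2k$ and $|G|\ge 3k$. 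Making this triangle case airtight — in particular, ensuring the swapped-out vertices together with the forest pieces always house one more cycle — is where the real work lies.
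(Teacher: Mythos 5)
The paper does not actually prove Theorem~\ref{CHT}; it is stated as the classical Corr\'adi--Hajnal result and cited to \cite{corradi1963maximal}. The closest thing the paper proves is Corollary~\ref{CHC}, and it does so by a rather different device (edge-maximality of a counterexample to obtain a ``near triangle factor,'' then local swaps), not by the extremal cycle-family argument you set up. So your proof has to stand on its own.

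Your framework --- take $\mathcal{F}$ with maximum cardinality $s$, secondarily minimum total length, and derive a contradiction from $s<k$ --- is the standard and correct skeleton for this theorem. But as written there are two genuine gaps. First, the inequality $|H|\ge 3k-3s$ goes the wrong way: you only know $|V(\mathcal{F})|\ge 3s$, not $|V(\mathcal{F})|\le 3s$, so $H$ could be small or empty (e.g.\ if $\mathcal{F}$ consists of a few long cycles covering all of $V$). To get $|H|\ge 3$ you must first prove that under the two-level extremality every cycle in $\mathcal{F}$ is a triangle --- and note that the swap you describe (three neighbors of $v$ on a long cycle yield a shorter cycle through $v$) cannot even start if $H=\emptyset$, so the case $H=\emptyset$ needs its own argument. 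Second, and more seriously, the triangle-exchange case is not carried out: you describe a strategy (track a second leaf $w$ of the forest, do simultaneous swaps, find a new cycle among displaced vertices) and then explicitly say that making it airtight ``is where the real work lies.'' That is precisely the content of the theorem; without it there is no contradiction and hence no proof. As submitted, this is a correct outline with the decisive step missing, not a proof.
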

 In 1998, Enomoto \cite{E} 
proved an Ore-type version of Theorem~\ref{CHT}:
\begin{thm}
\label{EnoTh}Every graph $G$ with $|G|\geq3k$ and
$\sigma_2(G)\geq4k-1$ contains $k$ independent cycles. 
\end{thm}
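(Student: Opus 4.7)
The plan is to induct on $k$. The base case $k=1$ reduces to showing that any graph $G$ with $|G|\ge 3$ and $\sigma_2(G)\ge 3$ contains a cycle: otherwise $G$ is a forest, and any forest on $\ge 3$ vertices has two non-adjacent leaves $u,v$, giving $d(u)+d(v)\le 2$ and contradicting $\sigma_2(G)\ge 3$.

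For the inductive step, I would take a collection $\mathcal C=\{C_1,\dots,C_m\}$ of pairwise vertex-disjoint cycles in $G$ that is first maximum in $m$ and then minimum in $\sum_i|V(C_i)|$, and assume for contradiction that $m\le k-1$. Set $R:=V(G)\setminus\bigcup_iV(C_i)$: maximality of $m$ forces $G[R]$ to be a forest, $m\le k-1$ forces $|R|\ge 3$, and minimality of the total length forces each $C_i$ to be chordless and imposes the following exchange property: if some $r\in R$ has two neighbours on a $C_i$ of length $\ge 4$, one can build a strictly shorter cycle inside $V(C_i)\cup\{r\}$, contradicting minimality. In particular, every vertex of $R$ has at most one neighbour on any $C_i$ of length $\ge 4$.

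Next I would exploit the Ore condition to find a ``heavy'' triangle. Since $G[R]$ is a forest on $\ge 3$ vertices, it contains two non-adjacent vertices $u,v$ with $d_R(u)+d_R(v)\le 2$; then
\[
\sum_{i=1}^{m}\bigl(|N(u)\cap V(C_i)|+|N(v)\cap V(C_i)|\bigr)\ \ge\ d(u)+d(v)-2\ \ge\ 4k-3.
\]
Cycles of length $\ge 4$ contribute at most $2$ each to the left-hand side by the exchange property, so the triangles in $\mathcal C$ carry essentially all of this mass; averaging over the at most $k-1$ triangles yields a triangle $C_i=xyz$ receiving at least $5$ edges from $\{u,v\}$.

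The heart of the proof, and the step I expect to be the main obstacle, is a case-by-case \emph{augmentation} turning these $\ge 5$ edges into $m+1$ pairwise vertex-disjoint cycles. One builds a new triangle $T'$ on two vertices of $\{x,y,z\}$ together with a vertex of $\{u,v\}$, replaces $C_i$ by $T'$, and frees one or two vertices of $V(C_i)$ into an updated remainder $R'$. Since five vertices cannot host two disjoint triangles, the extra cycle required to contradict maximality must reach outside $\{x,y,z,u,v\}$: either it is a triangle built in $G[R']$ using that the unused vertex of $\{u,v\}$ retains many neighbours in $R\cup V(C_i)$, or, when $m\ge 2$, it comes from a coupled augmentation with a second heavy triangle $C_j$ obtained by running the averaging step on a second non-adjacent pair of $R$. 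Tracking the disjointness of all cycles through this case analysis, and handling the boundary case $|R|=3$ where only one heavy triangle is available, is the technical core where the slack $4k-3>4(k-1)$ between hypothesis and inductive requirement is spent.
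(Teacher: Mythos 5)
You should first note that the paper does not actually prove Theorem~\ref{EnoTh}: it is quoted from Enomoto's paper, and the only related argument in the text is the Enomoto-style proof of Corollary~\ref{CHC}, which works in the much easier setting of an edge-maximal counterexample possessing a near-triangle factor. Judged on its own, your outline has two genuine gaps. The first is the assertion that $m\le k-1$ forces $|R|\ge 3$. Minimality of the total length only makes the cycles of your extremal collection chordless; it does not make them triangles, so $k-1$ cycles may cover all but at most two of the $\ge 3k$ vertices and nothing you say excludes this. Controlling the long cycles (or deriving a contradiction when the remainder is small) is a substantial portion of the actual Corr\'adi--Hajnal and Enomoto proofs, and it is simply absent here. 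A smaller inaccuracy in the same step: a vertex of $R$ can have two (opposite) neighbours on a chordless $4$-cycle, so ``at most one neighbour on any $C_i$ of length $\ge 4$'' is false as stated; your count survives with ``at most two on a $4$-cycle, at most one on a cycle of length $\ge 5$,'' but it shows how loosely the exchange arguments are being handled.

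The second gap is the endgame, which you yourself flag as the main obstacle but do not carry out. You are right that five vertices cannot host two disjoint cycles, so a single heavy triangle does not contradict the maximality of $m$; however, neither of your escape routes is substantiated. The Ore condition gives no lower bound on the degree of the single unused vertex of $\{u,v\}$, so ``retains many neighbours in $R\cup V(C_i)$'' is unjustified; and a second non-adjacent pair in $R$ need not exist ($R$ may induce a path on three vertices, whose only non-adjacent pair is the one already used), nor need its heavy triangle differ from $C_i$, nor is it clear how two heavy triangles sharing the pool $\{u,v\}$ yield $m+1$ disjoint cycles. This is precisely where the paper's proof of Corollary~\ref{CHC} does its real work: after the first heavy triangle it runs a second, weighted count of the form $2\Vert\{b_1,a_2\},\cdot\Vert+\Vert\{a_1,a_3\},\cdot\Vert$ to locate a third triangle and then performs a three-triangle swap --- and even that is in the simpler factor setting with minimum-degree (not Ore) hypotheses. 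So your proposal identifies a plausible skeleton and the right difficulty, but as it stands it is an outline with two unproved core steps, not a proof.
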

A \emph{tiling }of a graph $G$ is a set of independent subgraphs, called \emph{tiles}.
A tiling is a \emph{factor}, if its union spans $G$. For a subgraph $H\subseteq{G}$,
an ${H}$-tiling is a tiling whose tiles are all
isomorphic to ${H}$.  The $3$-cycle $C_3$ is called a triangle.
Theorem~\ref{CHT} has the following corollary, whose complementary
version is a precursor and special case of the 1970 Hajnal-Szemerédi
\cite{hajnal1970pcp} theorem on equitable coloring. 
\begin{cor}
\label{CHC}Every graph $G$ with $|G|$ divisible by $3$ and 
$\delta(G)\geq\frac{2|G|}{3}$ has a $C_3$-factor. 
\end{cor}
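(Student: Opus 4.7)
The plan is to derive this as an immediate consequence of Theorem~\ref{CHT}. Write $|G| = 3k$ so that $k = |G|/3$, and observe that the hypothesis $\delta(G) \geq 2|G|/3$ becomes exactly $\delta(G) \geq 2k$. Thus Theorem~\ref{CHT} applies and yields a family $\mathcal{F} = \{C^{(1)}, \ldots, C^{(k)}\}$ of $k$ pairwise vertex-disjoint cycles in $G$.

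The key observation is a pigeonhole squeeze: each cycle $C^{(i)}$ has length at least $3$, so the cycles use at least $3k$ vertices in total; but $G$ has only $3k$ vertices, so equality must hold throughout. Consequently, every $C^{(i)}$ is a triangle and $\bigcup_i V(C^{(i)}) = V(G)$, i.e., $\mathcal{F}$ is the desired $C_3$-factor.

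There is really no main obstacle here: the entire content is packaged in Theorem~\ref{CHT}, and the corollary just specializes it to the extremal regime where the vertex budget forces every cycle to be of minimum possible length. The only thing worth flagging is that one uses the implicit fact that any cycle in a simple graph has at least three vertices, which is what converts a ``$k$ disjoint cycles'' statement into a ``$C_3$-factor'' statement precisely when $|G| = 3k$.
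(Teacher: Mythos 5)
Your proof is correct and is in fact the standard way to derive Corollary~\ref{CHC} from Theorem~\ref{CHT}: with $|G| = 3k$, the degree hypothesis becomes $\delta(G) \ge 2k$, Theorem~\ref{CHT} hands you $k$ disjoint cycles, and the vertex budget ($3k$ vertices, each cycle using at least $3$) forces every cycle to be a triangle and forces the collection to cover $V(G)$. There is no gap.

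However, this is not the route the paper takes. Although the paper labels the statement a corollary of Theorem~\ref{CHT}, it deliberately does \emph{not} invoke that theorem as a black box; instead it gives a short, self-contained proof modeled on Enomoto's proof of Theorem~\ref{EnoTh}, explicitly ``for completeness, and to illustrate the origins of our methods.'' The paper's argument takes an edge-maximal counterexample $G$, observes that adding a single edge $a_1a_3$ yields a $C_3$-factor and hence $G$ has a \emph{near triangle factor} (a factor with one path $A = a_1a_2a_3$ and the rest triangles), proves a local swapping claim bounding $\|a_2, B\|$ when $\|\{a_1,a_3\}, B\|$ is large, and then uses a weighted degree count over the tiles to find triangles $B$ and $C$ with which $A$ can be rearranged into three genuine triangles, contradicting the assumption. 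What each approach buys: your derivation is shorter and cleaner if Theorem~\ref{CHT} is taken as given, while the paper's argument is self-contained and, more importantly, is a warm-up for the exchange-and-counting technique (maximal counterexample, near-factor, claim-based swaps, weighted degree sums over tiles) that reappears in the proofs of Theorems~\ref{thm:4-triangles}, \ref{thm:stWang}, and \ref{th:main}. So the student's shortcut, while valid, would defeat the pedagogical purpose the paper assigns to this proof.
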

This paper is motivated by the problem of proving versions of Corollary~\ref{CHC}
for directed graphs $\overrightarrow{G}:=(V,\overrightarrow{E}$).
Our directed graphs are \emph{simple}, in the sense that they have 
no loops and for all $x,y\in V$ there are at most two edges, one
of form $\overrightarrow{xy}$ and one of form $\overrightarrow{yx}$,
whose ends are in $\{x,y\}$. The \emph{in}- and \emph{out-degree} of a vertex $v$ are denoted by $deg^-(v)$ and $deg^+(v)$; the  \emph{total degree} of 
$v$ is the sum $deg_t(v):=deg^{-}(v)+deg^{+}(v)$, and the \emph{semidegree} of $v$ is $deg_0(v):=\min\{deg^-(v),deg^+(v)\}$.
The \emph{minimum total degree} of $\overrightarrow{G}$ is 
$\delta_t(\overrightarrow{G}):=\min\{deg_t(v):v \in V\}$, 
 and  the \emph{minimum semidegree} of $\overrightarrow{G}$ is 
 $\delta_0(\overrightarrow{G}):=\min\{deg_0(v):v\in V\}$.

Let $\overrightarrow{K}\subseteq\overrightarrow{G}$ be a subgraph
on three vertices $x,y,z$. If $\{\overrightarrow{xy},\overrightarrow{yz},\overrightarrow{xz}\}\subseteq E(\overrightarrow{K})$
then $\overrightarrow{K}$ is a transitive triangle, denoted by $\overrightarrow{T}_{3}$;
if $\{\overrightarrow{xy},\overrightarrow{yz},\overrightarrow{zx}\}\subseteq E(\overrightarrow{K})$
then $\overrightarrow{K}$ is a \emph{cyclic} triangle, denoted by
$\overrightarrow{C}_{3}$.  

Wang \cite{wangdir} proved the following directed version of Corollary
\ref{CHC}.
\begin{thm}
\label{thm:Wang}Every directed graph $\overrightarrow{G}$  with $\delta_t(\overrightarrow{G)}\ge\frac{3|\overrightarrow{G}|-3}{2}$ has a $\overrightarrow{C}_3$-tiling of size 
$\lfloor\frac{\overrightarrow{|G|}}{3}\rfloor$.
\end{thm}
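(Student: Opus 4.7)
The plan is to argue by contradiction using a maximum $\overrightarrow{C}_3$-tiling. Let $n=|\overrightarrow G|$, and suppose $\mathcal T$ is a maximum $\overrightarrow{C}_3$-tiling with $|\mathcal T|=t<\lfloor n/3\rfloor$. Set $R:=V\setminus V(\mathcal T)$, so $|R|=n-3t\ge 3$. Fix any three vertices $u_1,u_2,u_3\in R$; by maximality of $\mathcal T$ they do not induce a $\overrightarrow{C}_3$, giving us a first piece of structural information.

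The key augmenting step is to find a tile $T=\{x,y,z\}\in\mathcal T$ for which the six vertices $U=\{u_1,u_2,u_3,x,y,z\}$ admit two vertex-disjoint $\overrightarrow{C}_3$s in $\overrightarrow G[U]$; replacing $T$ in $\mathcal T$ by these two cyclic triangles produces a strictly larger tiling, a contradiction. I would first establish a local lemma stating that a $6$-vertex directed graph with sufficiently many edges contains two vertex-disjoint $\overrightarrow{C}_3$s, then show that for some $T\in\mathcal T$ the induced digraph on $U$ satisfies the hypothesis of that lemma.

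The counting input to the local lemma is straightforward averaging. For each $u_i\in R$, the number of directed edges from $u_i$ into $V(\mathcal T)$ is at least $3(n-1)/2-2(|R|-1)$, since the in/out-neighborhood of $u_i$ within $R$ contributes at most $2(|R|-1)$. Summing over $i\in\{1,2,3\}$ and distributing over the $t$ tiles, the average number of directed edges between $\{u_1,u_2,u_3\}$ and $V(T)$ across $T\in\mathcal T$ tends to roughly $9/2$ per $u_i$ as $t\to n/3$, i.e.\ most of the $18$ possible directed edges are present on some $T$. Combined with a similar estimate for the $6$ edges inside $V(T)$ (which form a $\overrightarrow{C}_3$ by assumption), this yields a $U$ with very large minimum total degree inside $U$ itself.

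The main obstacle is the local lemma, since a high edge count does not by itself preclude a near-transitive orientation with no $\overrightarrow{C}_3$ at all. I would therefore combine the averaging with a careful case analysis on the orientations between $\{u_1,u_2,u_3\}$ and $V(T)$, exploiting the fact that $\{u_1,u_2,u_3\}$ is $\overrightarrow{C}_3$-free and $\{x,y,z\}$ is cyclic. If the single-tile swap fails for every $T\in\mathcal T$, I would escalate to a two-tile exchange: show that $\{u_1,u_2,u_3\}\cup V(T)\cup V(T')$ contains three vertex-disjoint $\overrightarrow{C}_3$s for some pair $T,T'\in\mathcal T$, again using the averaging bound. The residual extremal configuration — where no exchange of any bounded size succeeds — should then be shown to force a vertex with total degree strictly less than $(3n-3)/2$, contradicting the hypothesis and completing the proof.
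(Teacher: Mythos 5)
Your plan has a genuine gap at its central step, and you concede it yourself: the ``local lemma'' is never proved, and in the form you state it --- a large count of directed edges between $\{u_1,u_2,u_3\}$ and a tile $T$ forces two disjoint $\overrightarrow{C}_3$'s in $\overrightarrow{G}[U]$ --- it cannot be true as a statement about cross-edge counts alone. For instance, if the three uncovered vertices are pairwise nonadjacent (the hypothesis does not forbid this: a vertex nonadjacent to two others can still have total degree $2(n-3)\ge\frac{3n-3}{2}$ for $n\ge 9$), then every cyclic triangle inside $U$ uses at most one $u_i$, so two disjoint cyclic triangles in $U$ would need four vertices of $T$; the single-tile swap then fails for \emph{every} $T$ no matter how many of the $18$ cross edges are present. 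So the whole burden falls on the orientation/adjacency case analysis inside $\{u_1,u_2,u_3\}$ and between the sides, which is precisely what you leave open; and since the bound $\frac{3n-3}{2}$ is tight (Example~\ref{ex:Wang}), the proposed escalations (``two-tile exchange'', ``the residual extremal configuration should be shown to force a low-degree vertex'') are not routine --- they \emph{are} the theorem. Two further unaddressed points: your averaging gives roughly $13.5$ cross edges per tile only when $t$ is close to $n/3$ (for a maximum tiling with small $t$ it yields nothing, and that regime needs a separate argument), and the cases $n\not\equiv 0\pmod 3$ are not treated.

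For comparison, the paper does not augment a maximum $\overrightarrow{C}_3$-tiling in the digraph at all. It passes to the underlying standard multigraph and proves Theorem~\ref{thm:stWang}: $\delta(M)\ge\frac{3n-3}{2}$ yields $\lfloor\frac{n}{3}\rfloor$ disjoint $5$-triangles, and a $5$-triangle contains a cyclic triangle no matter how its edges are oriented --- this reformulation is exactly what converts the orientation case analysis (your main obstacle) into plain multiplicity counting. The proof of Theorem~\ref{thm:stWang} then starts from a full-size tiling by $4$- and $5$-triangles, which already exists under the weaker bound $\frac{4n-3}{3}$ by Theorem~\ref{thm:4-triangles}, chooses it with the maximum number of $5$-triangles, and upgrades a remaining $4$-triangle $A$ by a weighted exchange (the potential $f(B)=\|A,B\|+\|a_3',B\|$ locates a tile $B$ with $f(B)\ge 19$, after which a short case analysis on $\|a,B\|\in\{4,5,6\}$ produces two disjoint $5$-triangles), with the $n\not\equiv 0\pmod 3$ cases handled by deleting or adding a dominating vertex. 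If you want to salvage your route, adopting the multigraph viewpoint is the missing idea that makes the counting close.
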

The degree condition of Theorem~\ref{thm:Wang} is tight: 

\begin{example}\label{ex:Wang}
Suppose
$n=2k+1$ is odd and divisible by $3.$ Let $\overrightarrow G$ be the directed graph
with $V(\overrightarrow G)=V_{1}\cup V_{2}$, where $V_{1}\cap V_{2}=\emptyset$,
$k=|V_{1}|$, $|V_{2}|=k+1$, and 
\[
E(\overrightarrow G)=F:=\{\overrightarrow{xy}:x,y\in V_{1}\vee x,y\in V_{2}\vee(x\in V_{1}\wedge y\in V_{2})\}.
\]
Then $\delta_t(\overrightarrow G)=3k-1=\frac{3|\overrightarrow G|-3}{2}-1$, and no $\overrightarrow{C}_3$
contains vertices from both parts $V_{1}$ and $V_{2}$. So no tiling
of $\overrightarrow G$ contains $\frac{n}{3}$ cyclic triangles. 
\end{example}

While Theorem~4 is tight, we can significantly relax the minimum
degree condition for the cost of at most one incorrectly oriented
edge. Our main result implies:
\begin{cor}[to Theorem \ref{th:main}]
\label{cor:main}Suppose $\overrightarrow{G}$
is a directed graph with $\delta_t(\overrightarrow{G})\geq\frac{4|\overrightarrow G|-3}{3}$,
and $c\geq0$ and $t\geq1$ are integers with $c+t=\lfloor\frac{|\overrightarrow G|}{3}\rfloor$.
Then $\overrightarrow{G}$ has a tiling consisting of $c$ cyclic
triangles and $t$ transitive triangles. 
\end{cor}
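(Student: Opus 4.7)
The plan is to reduce Corollary~\ref{cor:main} to Theorem~\ref{th:main} via the standard correspondence that sends a simple directed graph $\overrightarrow G$ on $n$ vertices to the undirected multigraph $M$ on $V(\overrightarrow G)$ whose edge $\{x,y\}$ has multiplicity equal to the number of directed edges of $\overrightarrow G$ between $x$ and $y$. Under this correspondence $d_M(v) = \deg^-(v) + \deg^+(v) = deg_t(v)$ for every $v$, so the hypothesis $\delta_t(\overrightarrow G) \ge (4n-3)/3$ becomes $\delta(M) \ge (4n-3)/3$, matching the degree hypothesis of Theorem~\ref{th:main}.

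The translation I then need is a short case analysis of how a multigraph triangle $T \subseteq M$ on $\{x,y,z\}$ is realised as a directed triangle in $\overrightarrow G$: (i) if all three pairs of $T$ have multiplicity $1$, then $T$ arises in $\overrightarrow G$ as a single directed triangle, either a $\overrightarrow T_3$ or a $\overrightarrow C_3$, with the type forced by the orientations; (ii) if $T$ has at least one doubled pair, then $T$ always admits a $\overrightarrow T_3$ realisation in $\overrightarrow G$, since an appropriate orientation of the doubled pair completes a transitive triple with the other two edges; (iii) such a $T$ also admits a $\overrightarrow C_3$ realisation unless it has exactly one doubled pair and the opposite vertex is a common source or common sink of its two single edges.

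With these preparations, apply Theorem~\ref{th:main} to $M$ with the parameters $c$ and $t$ of the corollary. The main theorem should deliver a triangle tiling of $M$ of size $\lfloor n/3 \rfloor$ together with a designation of $t$ of its tiles as \emph{transitive-type} and $c$ of them as \emph{cyclic-type} that is compatible with (i)--(iii). Realising each labelled tile as a $\overrightarrow T_3$ or $\overrightarrow C_3$ on its three vertices, and taking the union of the tiles, then produces the tiling of $\overrightarrow G$ demanded by the corollary.

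The main obstacle is of course Theorem~\ref{th:main} itself, which amounts to a multigraph refinement of Enomoto's Theorem~\ref{EnoTh} at minimum degree $(4n-3)/3$ with additional control over the cyclic/transitive split of the tiles. The hypothesis $t \ge 1$ is genuinely necessary: a multigraph lift of Example~\ref{ex:Wang} forces at least one simple triangle of $\overrightarrow T_3$-type, in agreement with Wang's tightness. I would expect the proof of Theorem~\ref{th:main} to combine an Enomoto-style absorbing or extremal argument establishing the existence of a triangle tiling in $M$ with local swap operations---enabled by the excess edge multiplicities that $\delta(M) \ge (4n-3)/3$ provides---that tune the cyclic/transitive split to any prescribed $(c,t)$ with $t \ge 1$.
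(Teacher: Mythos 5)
Your high-level reduction---pass to the underlying multigraph $M$ of $\overrightarrow G$, note $\delta(M)=\delta_t(\overrightarrow G)\ge\frac{4n-3}{3}$, apply Theorem~\ref{th:main}, and realise each multigraph tile as a directed triangle---is exactly the paper's route, and your case analysis (i)--(iii) of how a multigraph triangle lifts to a $\overrightarrow T_3$ or $\overrightarrow C_3$ is correct. The gap is in the bridge you leave unexecuted: you "apply Theorem~\ref{th:main} to $M$ with the parameters $c$ and $t$" and say it "should deliver \ldots a designation of $t$ of its tiles as transitive-type and $c$ of them as cyclic-type," but Theorem~\ref{th:main} takes no such parameters and produces no such labelling. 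What it actually produces is a tiling of $M$ into exactly one $4$-triangle and $\lfloor n/3\rfloor-1$ five-triangles. You need to then observe, as the paper does in the introduction, that a $4$-triangle (one heavy pair, two light pairs) always contains a $\overrightarrow T_3$ on the same three vertices---regardless of how the two light pairs are oriented, one of the two orientations of the heavy pair completes a transitive triangle---but may fail to contain a $\overrightarrow C_3$, precisely in your case (iii); whereas a $5$-triangle (two heavy pairs) contains both a $\overrightarrow T_3$ and a $\overrightarrow C_3$, since the orientation of the lone light pair can always be completed either way. So with $t\ge 1$ you assign the unique $4$-triangle to be the required $\overrightarrow T_3$, and then realise $c$ of the $\lfloor n/3\rfloor-1$ five-triangles as $\overrightarrow C_3$ and the remaining $t-1$ as $\overrightarrow T_3$. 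This is why $t\ge 1$ is necessary and why the $4$-/$5$-triangle formulation of Theorem~\ref{th:main}, rather than the "Enomoto-style theorem with a built-in cyclic/transitive split" you posit, is the right multigraph statement. Your closing speculation about how Theorem~\ref{th:main} is proved is not part of what the corollary's derivation requires.
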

The degree condition of Corollary~\ref{cor:main} is also tight. 

\begin{example}
There exists a directed graph $\overrightarrow{G}$ such that $\delta_t(\overrightarrow{G})\geq\frac{4|\overrightarrow G|-3}{3}-1$,
but $\overrightarrow{G}$ cannot be tiled with any combination of
cyclic and transitive triangles: Suppose $|\overrightarrow G|=3k$.
Set $V(\overrightarrow{G})=V_{1}\cup V_{2}$, where $V_{1}\cap V_{2}=\emptyset$,
$|V_{1}|=k+1$, and $|V_{2}|=2k-1$, and let $E(\overrightarrow{G})=\{\overrightarrow{xy}:x\notin V_{1}\wedge y\notin V_{1}\}$.
Then $\delta_t(\overrightarrow{G})=4k-2=\frac{4n-3}{3}-1$,
but $\overrightarrow{G}$ does not have any triangle factor, since
every vertex in $V_{1}$ would need to be paired with two vertices
from $V_{2}$, and there are too few vertices in $V_{2}$.
\end{example}

Our methods are obscured by the elementary proofs of Theorem~\ref{th:main} and Corollary~\ref{cor:main}. We used stability techniques (regularity-blow-up, absorbing structures) to discover what should be true, and only then were able to concentrate our energy on a successful elementary argument of the optimal result. Sometimes the process works in the other direction.  The striking gap between Wang's Theorem and Corollary~\ref{cor:main}, suggests that there is a better theorem. An important characteristic of Example~\ref{ex:Wang} is that while every vertex has large total degree, it also has small semidegree. 
   This led us to the following conjecture.

\begin{conj}\label{con1}
Every directed graph $\overrightarrow G$ with $|\overrightarrow G|=3k$ and $\delta_0(\overrightarrow G)\ge2k$ has a $\overrightarrow C_3$-factor. 
\end{conj}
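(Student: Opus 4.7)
The plan is to leverage Corollary~\ref{cor:main} to obtain a tiling that is essentially a single vertex-swap away from being cyclic, and then repair that last transitive triangle by a local swap argument.

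Since $\delta_0(\overrightarrow G)\ge 2k$ implies $\delta_t(\overrightarrow G)\ge 4k>4k-1$, Corollary~\ref{cor:main} with $t=1$ and $c=k-1$ produces a factor $\mathcal F$ of $\overrightarrow G$ consisting of one transitive triangle $T=\{x,y,z\}$ with arcs $\overrightarrow{xy},\overrightarrow{yz},\overrightarrow{xz}$, together with $k-1$ cyclic triangles $T_1,\dots,T_{k-1}$. The goal is to convert $\mathcal F$ into an all-cyclic factor.

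The first move is trivial: if $\overrightarrow{zx}\in E(\overrightarrow G)$, then $x\to y\to z\to x$ is already a $\overrightarrow C_3$ and we are done. Otherwise the pair $\{x,z\}$ is spanned by exactly one arc, so $\delta_0\ge 2k$ forces $x$ to receive at least $2k-1$ arcs from $V\setminus\{x,y,z\}$ and $z$ to send at least $2k-1$ arcs into $V\setminus\{x,y,z\}$. The main step is then a swap argument. For each cyclic $T_i=\{u_i,v_i,w_i\}$ with arcs $\overrightarrow{u_iv_i},\overrightarrow{v_iw_i},\overrightarrow{w_iu_i}$, the one-vertex swap $x\leftrightarrow u_i$ replaces $T\cup T_i$ by the two cyclic triangles $\{u_i,y,z\}$ and $\{x,v_i,w_i\}$ precisely when the four arcs $\overrightarrow{u_iy}$, $\overrightarrow{zu_i}$, $\overrightarrow{xv_i}$, $\overrightarrow{w_ix}$ all belong to $E(\overrightarrow G)$. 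There are nine such one-vertex swaps per $T_i$ (three choices from $T$ times three from $T_i$), plus analogous two-vertex swaps and longer ``augmenting'' sequences through several $T_i$. A careful double-count exploiting $\delta_0\ge 2k$ on $x,y,z$ and on each $u_i,v_i,w_i$ should show that if none of these swaps succeeds, then the arcs between $T$ and $V\setminus T$ are forced into an extremely rigid, near-bipartite pattern.

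The main obstacle, as in every exact Corr\'adi--Hajnal-type theorem, is this extremal case in which every local swap fails. Here the plan is to classify precisely which configurations of $\mathcal F$ block every swap, show that those configurations force a near-partition of $V$ into classes with tightly constrained in- and out-neighborhoods---a structure reminiscent of Example~\ref{ex:Wang} but obstructed by $\delta_0\ge 2k$ rather than merely by $\delta_t$---and then construct a $\overrightarrow C_3$-factor directly inside the resulting highly structured graph, possibly by an inductive peel-off of one cyclic triangle at a time using the surplus semidegree. Eliminating all $o(k)$ slack from this extremal analysis is the genuine difficulty, and presumably the reason the authors leave the exact statement of Conjecture~\ref{con1} open while proving only its asymptotic counterpart.
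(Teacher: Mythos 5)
There is a genuine gap: what you have written is a program for a proof, not a proof. Note first that the statement you were asked to prove is stated in the paper as Conjecture~\ref{con1} and is left open there; the paper only proves an asymptotic version (Theorem~\ref{thm:5tri_factor}, yielding Corollary~\ref{cor:stability}, via an absorbing/stability argument) and a conditional reduction (Corollary~\ref{cor:CiC}: Conjecture~\ref{conj:5tri} would imply Conjecture~\ref{con1}). So there is no exact proof in the paper to match, and your attempt does not supply one either. Your starting point is fine -- $\delta_0\ge 2k$ gives $\delta_t\ge 4k>\frac{4\cdot 3k-3}{3}$, so Corollary~\ref{cor:main} with $c=k-1$, $t=1$ does yield a factor with exactly one transitive triangle -- but everything after that is conditional language: the double count that is supposed to show that failure of all one- and two-vertex swaps forces a rigid near-bipartite structure is never carried out (``should show''), the ``longer augmenting sequences'' through several cyclic triangles are invoked without being defined or analyzed, and the extremal case -- classifying the blocking configurations and constructing a $\overrightarrow C_3$-factor inside them using the semidegree surplus -- is exactly where the whole difficulty of the conjecture lives and is left entirely unexecuted (``the plan is to classify\dots possibly by an inductive peel-off'').

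Concretely, two things would have to be established and are not: (1) that non-existence of the local swaps you list really does force a global structure (a priori it only constrains the arcs between $T$ and each individual $T_i$, and propagating this to a vertex partition of all of $V$ requires an argument, typically via longer alternating/augmenting structures whose feasibility under $\delta_0\ge 2k$ is unproven); and (2) that the resulting extremal structure admits a cyclic-triangle factor, which cannot be waved through by analogy with Example~\ref{ex:Wang}, since that example is extremal for $\delta_t$, not for $\delta_0$, and the relevant extremal configurations under the semidegree hypothesis are not identified anywhere in your argument. You acknowledge this yourself in your final sentence, which is an admission that the proof is incomplete rather than a proof.
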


In support of this conjecture, we use stability techniques to prove the following asymptotic version.  We expect that with more effort this approach can be improved to a proof of the conjecture for sufficiently large graphs by using the techniques similar to those of Levitt, S\'ark\"ozy and Semer\'edi  \cite{levitt2010avoid}.

\begin{cor}[to Theorem \ref{thm:5tri_factor}]
\label{cor:stability}For every $\varepsilon>0$
there exists  $n_{0}\in\mathbb N$ such that every directed graph $\overrightarrow{G}$
with $|\overrightarrow G| \ge n_0$ and minimum semidegree 
 $\delta_0(\overrightarrow{G})\geq(\frac{2}{3}+\varepsilon)|G|$
has a $\overrightarrow C_3$-tiling of size $\left\lfloor\frac{|\overrightarrow G|}{3}\right\rfloor$.
\end{cor}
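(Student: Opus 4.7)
My plan is to deduce the corollary from Theorem~\ref{thm:5tri_factor} by the R\"odl--Ruci\'nski--Szemer\'edi absorbing method. The three ingredients are an absorbing set $A$ built inside $\overrightarrow{G}$, a near-perfect $\overrightarrow{C}_3$-tiling of $\overrightarrow{G}-A$ coming from the theorem, and a short modular correction that combines them into a tiling of size $\lfloor |\overrightarrow{G}|/3\rfloor$.

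\medskip

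First I would build the absorbing set. Fix $0 < \beta \ll \alpha \ll \varepsilon$. The goal is to produce $A \subseteq V(\overrightarrow{G})$ with $|A| \le \alpha n$ and a $\overrightarrow{C}_3$-tiling of $A$ such that for every $W \subseteq V(\overrightarrow G) \setminus A$ with $3 \mid |W|$ and $|W| \le \beta n$, the graph $\overrightarrow G[A \cup W]$ has a $\overrightarrow{C}_3$-factor. For each triple $\{x,y,z\}$ spanning a cyclic triangle I would look for nine-vertex ``switching gadgets'' whose six extra vertices $\{a_1,a_2,b_1,b_2,c_1,c_2\}$ can be tiled either as two disjoint cyclic triangles $a_1b_1c_1$, $a_2b_2c_2$ (omitting $x,y,z$) or as three cyclic triangles $xa_1a_2$, $yb_1b_2$, $zc_1c_2$ (using $x,y,z$). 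The semidegree assumption $\delta_0(\overrightarrow G) \ge (2/3+\varepsilon)n$ ensures that every pair of vertices has at least $(1/3+\Omega(\varepsilon))n$ common in-neighbours and common out-neighbours, so iterating this six times shows that at least $c(\varepsilon)n^6$ such gadgets exist. A random subset followed by a greedy pruning step yields $A$.

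\medskip

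Next I would apply Theorem~\ref{thm:5tri_factor} to $\overrightarrow G - A$. Since $\alpha$ is small compared to $\varepsilon$, the semidegree of $\overrightarrow G - A$ is still at least $(2/3 + \varepsilon/2)(n - |A|)$, so the theorem produces a $\overrightarrow{C}_3$-tiling that leaves an uncovered set $W_0$ of size at most $\beta n$. I would then discard at most two cyclic triangles from this tiling and add their vertices to $W_0$ to obtain $W \supseteq W_0$ with $3 \mid |W|$ and $|W| \le \beta n + 6$. The absorbing property of $A$ gives a $\overrightarrow{C}_3$-factor of $\overrightarrow G[A \cup W]$, and combining this with the remaining triangles of the tiling of $\overrightarrow G - A$ completes a $\overrightarrow{C}_3$-tiling of $\overrightarrow G$ of size $\lfloor n/3 \rfloor$.

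\medskip

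The main obstacle is the directed absorbing lemma. Unlike in the undirected case, each of the twelve edges in the nine-vertex gadget must carry the correct orientation to form a \emph{cyclic} (rather than transitive) triangle, so each of the six gadget vertices must be chosen from a carefully intersected collection of in- and out-neighbourhoods. Every such intersection costs a factor of order $\varepsilon$, so the final count is $\Omega(\varepsilon^{\,O(1)} n^6)$; verifying that this survives all six choices, and then confirming that Theorem~\ref{thm:5tri_factor} still applies to the slightly reduced graph $\overrightarrow G - A$, are the only steps that require genuine work. Everything else is a routine application of the absorbing framework.
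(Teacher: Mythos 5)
There is a genuine gap, and it is exactly at the point where you invoke Theorem~\ref{thm:5tri_factor}. That theorem does not say that every standard multigraph with $\delta(M)\ge(\frac43+\varepsilon)n$ has a $5$-triangle factor; it has the additional hypothesis that $H_M$ is \emph{not} $\alpha$-splittable, and your proposal never verifies (and cannot verify) this for $\overrightarrow G-A$. The semidegree condition $\delta_0(\overrightarrow G)\ge(\frac23+\varepsilon)n$ does not rule out splittability of the heavy graph: take $V=X\cup Y$ with $|X|=|Y|=n/2$, put all edges in both directions inside each part, and orient each cross pair with a single edge so that every vertex has about $n/4$ cross in-neighbours and $n/4$ cross out-neighbours; then $\delta_0\approx \frac{3n}{4}$ but there are no heavy cross edges at all, so $H_M$ is $0$-splittable. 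This extremal case is precisely why the paper's proof of Corollary~\ref{cor:stability} is not a one-line consequence of Theorem~\ref{thm:5tri_factor}: when $H_M$ is $\alpha^2$-splittable, the paper cleans up a small vertex set $Z$ with a matching extended to $5$-triangles, fixes divisibility with one crossing cyclic triangle, and then applies Wang's Theorem~\ref{thm:Wang} to $\overrightarrow G[A'']$ and $\overrightarrow G[B'']$ separately, checking $\delta_t\ge\frac32(|A''|-1)$ and $\frac32(|B''|-1)$ inside each part. Your proposal has no counterpart to this extremal analysis. (A smaller misreading: when the theorem does apply, it gives a perfect $5$-triangle factor under divisibility, not merely an almost-perfect tiling with leftover $\beta n$; and one should say explicitly that a $5$-triangle in the underlying multigraph yields a cyclic triangle in $\overrightarrow G$ under any orientation, which is the paper's reason for working with $5$-triangles.)

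Your directed absorbing lemma is also not justified as stated. In the nine-vertex gadget the last vertex $c_2$ must simultaneously lie in $N^-(z)\cap N^+(c_1)\cap N^+(b_2)\cap N^-(a_2)$, an intersection of four neighbourhoods each only guaranteed to have size $(\frac23+\varepsilon)n$; the trivial bound gives $(4\varepsilon-\frac13)n$, which is vacuous for $\varepsilon<\frac1{12}$, so the claimed count of $c(\varepsilon)n^6$ gadgets does not follow from the degree condition. This is not a cosmetic issue: if such a direct absorbing argument worked under the semidegree hypothesis alone, it would essentially bypass the splittable case and prove the asymptotic form of Conjecture~\ref{con1} without any extremal analysis, whereas the paper's own absorbing structure (heavy edges, $5$-chains, $45$-vertex sponges inside the multigraph) needs the non-splittability hypothesis precisely to prove the connectedness statement $L_5(x)=V$ in Claim~1 of the proof of Theorem~\ref{thm:5tri_factor}. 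To repair your argument you would need either a gadget whose existence really does follow from $\delta_0\ge(\frac23+\varepsilon)n$, or, as the paper does, a separate treatment of the splittable configuration.
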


It turns out that our results can be phased more generally, and
proved more easily,  in
terms of multigraphs. Suppose that $M=(V,E)$ is a multigraph. For
two vertices $x,y\in V$ let $\mu(x,y)$ be the number of edges with
ends $x$ and $y$.  In particular, if $xy\notin E$ then $\mu(x,y)=0$.  Let $\mu(M):=\max_{x,y\in V}\mu(x,y)$. 
 The \emph{degree} of $x$ is $d(x):=\sum_{y\in V}\mu(x,y)$. 
The \emph{minimum degree} of $M$ is $\delta(M):=\min\{d(v):v\in V\}$. A
$k$-\emph{triangle} is a multigraph $T_k$ such that $C_3\subseteq T_k$,
$|T_k|=3$ and $\|T_k\|=k$.

The \emph{underlying multigraph} $M$ of a simple directed graph $\overrightarrow{G}$
is obtained by removing the orientation of all edges of $G$. In particular,
if $\overrightarrow{xy}$ and $\overrightarrow{yx}$ are both edges of
$\overrightarrow{G}$ then $\mu_{M}(x,y)=2$. By our definitions,
$\delta_t(\overrightarrow{G})=\delta(M)$. If $M$ contains a $4$-triangle with vertices $x,y,z$ and $\mu_{M}(x,y)=2$ then $\overrightarrow{G}$
contains a transitive triangle with the same vertex set, since regardless
of the orientation of $xz$ and $yz$, one of the orientations $\overrightarrow{xy}$
or $\overrightarrow{yx}$ completes a transitive triangle on $\{x,y,z\}$.
Thus if $M$ contains $t$ independent $4$-triangles  then $\overrightarrow{G}$
contains $t$ independent transitive triangles. Notice that the converse is not true. For instance, if $\overrightarrow{G}$ is an orientation then $\mu(M)=1$, and so $M$ contains no $4$-triangle. Similarly, if $M$ contains a $5$-triangle with vertices $x,y,z$ and $\mu(xy)=2=\mu(yz)$ then  $\overrightarrow{G}$ contains a cyclic triangle (and also a transitive triangle) on the same vertex set, since any orientation of the edge $xz$ can be extended to a cyclic triangle by choosing the orientations of the other two edges carefully.


It is convenient to introduce the following terminology and notation. A multigraph $M:=(V,E)$ is \emph{standard} if  $\mu(M)\leq2$. For a fixed standard multigraph $M$ we use the following default notation. Two simple graphs
$G:=G_{M}:=(V,E_{G})$ and $H:=H_{M}:=(V,E_{H})$ are defined by $E_{G}:=\{xy:\mu(x,y)\geq1\}$
and $E_{H}:=\{xy:\mu(x,y)=2\}$. Edges $xy\in E_{H}$ are said to
be \emph{heavy}, and $y$ is said to be a \emph{heavy} neighbor of
$x$; edges in $E_{G}\smallsetminus E_{H}$ are \emph{light}. We also set $n:=|M|$.

The following is our
main theorem.
\begin{thm}
\label{th:main}
Every standard multigraph
$M$ with  $\delta(M)\geq\frac{4n-3}{3}$
has a tiling  of size $\left\lfloor\frac{n}{3}\right\rfloor$, where one tile is a $4$-triangle and the remaining $\left\lfloor\frac{n}{3}\right\rfloor-1$ tiles are   $5$-triangles.
\end{thm}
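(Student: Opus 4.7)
The plan is to reduce the multigraph problem to the Corr\'adi--Hajnal corollary for $G_M$ and then upgrade the resulting $C_3$-factor using the heavy graph $H_M$. From $\delta(M) \ge (4n-3)/3$, writing $d_M(v) = d_{G_M}(v) + d_{H_M}(v)$ and using the fact that every heavy edge is a $G_M$-edge (so $d_{H_M}(v) \le d_{G_M}(v)$), I obtain
\[
\delta(G_M) \ge \bigl\lceil (4n-3)/6 \bigr\rceil \quad \text{and}\quad \delta(H_M) \ge \lceil n/3\rceil.
\]
For $n = 3k$ the first bound is exactly the Corr\'adi--Hajnal threshold $\delta \ge 2k$, so Corollary~\ref{CHC} provides a $C_3$-factor $\mathcal F = \{T_1,\dots,T_k\}$ of $G_M$. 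Assign each $T_i$ a \emph{weight} $h(T_i) \in \{0,1,2,3\}$ counting heavy edges among the three edges of $T_i$; then $T_i$ is a $5$-triangle iff $h(T_i)\ge 2$ and a $4$-triangle iff $h(T_i)=1$.

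I would choose $\mathcal F$ to maximize $h(\mathcal F) := \sum_i h(T_i)$ and argue that (i) at most one $T_i$ has $h(T_i) \le 1$, and (ii) the exceptional triangle $T_{i^*}$ has $h(T_{i^*}) \ge 1$. For (i), given two triangles $T_i, T_j$ with $h(T_i), h(T_j) \le 1$, the heavy-degree bound $\delta(H_M) \ge n/3$ forces many heavy incidences from $V(T_i) \cup V(T_j)$ into $V \setminus (V(T_i) \cup V(T_j))$. A counting argument then locates a triangle-preserving exchange between $T_i$ and $T_j$ --- or a three-way rotation involving a third triangle $T_\ell$ --- that strictly increases $h(\mathcal F)$, contradicting maximality. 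Statement (ii) follows similarly from a single-vertex swap that imports one heavy edge into the exceptional $3$-triangle. When $n = 3k + r$ with $r \in \{1,2\}$, first remove $r$ well-chosen vertices so that the reduced multigraph still satisfies the threshold $(4(n-r)-3)/3$, and then apply the divisible case.

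\textbf{Main obstacle.} The delicate step is guaranteeing the swaps in (i). A vertex exchange between $T_i$ and $T_j$ can easily destroy the $G_M$-triangle structure, since the new triples must still span triangles in $G_M$. One must combine $\delta(G_M) \ge 2n/3$ with $\delta(H_M) \ge n/3$, and likely resort to three-way rotations in some configurations, to guarantee that a triangle-preserving, heavy-increasing exchange always exists. The critical sub-case is the extremal one in which $\delta(G_M)$ saturates the Corr\'adi--Hajnal bound and $M$ resembles the bipartite-like blow-up of Example~3; there the $C_3$-factor of $G_M$ is essentially forced and iterative swapping is highly constrained, so this regime most likely has to be handled by an explicit ad hoc construction rather than by local exchange.
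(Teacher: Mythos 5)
Your reduction is sound as far as it goes: $\delta(G_M)\ge 2n/3$ and $\delta(H_M)\ge n/3$ are exactly the bounds the paper extracts from $\delta(M)\ge\frac{4n-3}{3}$, and starting from a $C_3$-factor of $G_M$ via Corollary~\ref{CHC} and improving it by exchanges is precisely how the paper proves the much easier Theorem~\ref{thm:4-triangles} (one heavy edge per tile). But the entire content of Theorem~\ref{th:main} is your claims (i) and (ii), and your proposal does not prove them; it only asserts that ``a counting argument locates a triangle-preserving exchange or a three-way rotation,'' and your own closing paragraph concedes that the tight regime would need an ad hoc construction you do not supply. The paper's proof shows why swaps confined to a weight-maximal triangle factor and two deficient tiles are not enough at this threshold: two specific tiles need not carry enough edges between them to admit any local improvement, and the improving reconfigurations must pass through objects that are not triangle factors at all. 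Concretely, the paper (a) passes to an edge-maximal counterexample and proves (Claim~\ref{clm:1}) that a maximum $5$-triangle tiling misses only three vertices, which already requires adding an edge to $M$ and optimizing over $6$-sets containing two independent heavy edges whose complement has a $T_5$-factor; (b) re-optimizes so that the three leftover vertices span a heavy path $xyz$ (Claim~\ref{clm:6} together with the choice of $\mathcal T$ maximizing $\|W\|_H$); and (c) runs a genuinely three-tile analysis --- the leftover path, a tile $A$ chosen as in Claim~\ref{clm:7}, and a second tile $B$ located by the weighted count $2\|a_1y,B\|+\|xz,B\|\ge 25$ of Claim~\ref{clm:8} --- supported by Propositions~\ref{prop:two_vertices_9}--\ref{prop:heavy_path_5_4_triangles} and the case analysis of Claims~\ref{clm:9}--\ref{clm:13}. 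None of this is reachable from ``maximize $h(\mathcal F)$ and swap,'' so the central step of your plan is a genuine gap rather than a compressed version of the paper's argument.

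A smaller but real error: for $n\equiv 2\pmod 3$ you cannot in general delete two ``well-chosen'' vertices and keep the hypothesis, since deleting two vertices may cost a remaining vertex $4$ in degree while the threshold drops by less; even using integrality, $\left\lceil\frac{4n-3}{3}\right\rceil-4<\frac{4(n-2)-3}{3}$. The paper instead adds a new vertex joined to all of $V$ by heavy edges and applies the divisible case to $M^{+}$; your $r=1$ reduction by deleting a single vertex is fine.
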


Other authors have studied different degree conditions for directed graphs. 
For example Ghouila-Houri \cite{gh} and Woodall \cite{wood}  proved analogs of Dirac's and Ore's theorems for directed graphs. Orientations of graphs (directed graphs with neither multiple edges nor $2$-cycles) lead to another group of results. For example,  Keevash, Kuhn, and Osthus \cite{kko} proved that if $\overrightarrow{G}$ is an oriented graph then $\delta_0(\overrightarrow{G})\geq (3|\overrightarrow{G}| - 4)/8$ guarantees the existence of a Hamilton cycle. Recently, Keevash and Sudakov \cite{ks} showed that
there is some
   $\epsilon>0$ such that for  sufficiently large $n$
    if $\overrightarrow{G}$ is an oriented graph on at least $n$ vertices with $\delta_0(\overrightarrow{G})\geq (1/2 - \epsilon)|\overrightarrow{G}|$ then $\overrightarrow{G}$ contains a packing of directed triangles covering all but at most three vertices.  

The paper is organized as follows. In the remainder of this section we review some additional notation.  In Section 2 we warm up by giving a short self-contained proof of the generalization of Wang's Theorem to standard multigraphs.  This generalization is needed in Section 4.  In Section 3 we prove our main result, and in Section 4 we prove stability results related to Conjecture~\ref{con1}. 

\subsection{Additional notation}

Fix a multigraph (or graph) $M=(V,E)$.
 Set $|M|:=|V|$ and $\left\Vert M\right\Vert :=|E|$.
For a subset $U\subseteq V$, let $\left\Vert U\right\Vert :=\left\Vert M[U]\right\Vert =\frac{1}{2}\sum_{e\in E(U)}\mu(e)$; and
 let $\overline{U}:=V\smallsetminus U$. 
 For any vertex $v\in V(M)$, let $\left\Vert v,U\right\Vert :=\sum_{u\in U}\mu(v,u)$. 
For $U,U'\subseteq V(G)$, let $\left\Vert U,U'\right\Vert :=\sum_{v\in U}\left\Vert v,U'\right\Vert $,
and let $E(U,U')$ be the set of edges with one end in $U$ and the
other end in $U'$. Then $\left\Vert U,U'\right\Vert =|E(U,U')|+\left\Vert U\cap U'\right\Vert $. Viewing an edge or nonedge $xy$ as a set,  these definitions imply $\mu(xy)=\|xy\|$.
If $T$ is a triangle
and $V(T)=\{x,y,z\}$ we may identify $T$ by listing the vertices
as $xyz$ or by listing an edge and a vertex, that is, if $e=yz$
then we may refer to $T$ as $ex$ or $xe$.

Set $[n]:=\{1,\dots,n\}$. If $i,j\in [n]$ is clear from the context, we may write $i\oplus j$ for $i+j\pmod n$ with out explicitly mentioning $n$.

\section{Warm-up}\label{section:4-triangles}

In this section we warm up by proving the  multigraph generalizations of Theorem~\ref{thm:Wang} and the case $c=0$ of Corollary~\ref{cor:main}. For completeness, and to illustrate the origins of our methods, 
we begin with a short proof of Corollary~\ref{CHC} based on Enomoto's proof of Theorem~\ref{EnoTh}.

\begin{proof}[Proof of Corollary~\ref{CHC}]
Let $G=(V,E)$ be an edge-maximal counterexample. 
 Then $n=3k$, $\delta(G)\geq2k$,
 $G$ does not contain a $C_3$-factor (so $G\ne K_{3k}$),
but the graph $G^{+}$ obtained by adding a new edge $a_{1}a_{3}$
does have a $C_3 $-factor. So $G$ has a \emph{near triangle factor} $\mathcal{T}$, 
i.e., a factor such that $A:=a_{1}a_{2}a_{3}\in\mathcal{T}$
is a path and every $H\in\mathcal{T}-A$ is a triangle. 
\begin{claim*}
Suppose $\mathcal{T}$ is a near triangle factor of $G$ with
path $A:=a_{1}a_{2}a_{3}$ and triangle $B:=b_{1}b_{2}b_{3}$. If
$\left\Vert \{a_{1},a_{3}\},B\right\Vert \ge5$ then $\left\Vert a_{2},B\right\Vert =0$.\end{claim*}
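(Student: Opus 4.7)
The plan is to prove the contrapositive by contradiction: assume $\|a_2,B\|\ge 1$, pick a neighbor $b_m$ of $a_2$ in $B$, and use the hypothesis $\|\{a_1,a_3\},B\|\ge 5$ to repartition the six vertices $\{a_1,a_2,a_3,b_1,b_2,b_3\}$ into two honest triangles. Combined with the triangles of $\mathcal{T}-\{A,B\}$, this yields a genuine $C_3$-factor of $G$, contradicting the choice of $G$.

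First I would observe that of the $6$ potential edges from $\{a_1,a_3\}$ to $B$, at most one is missing. Hence there exist at least two vertices of $B$, call them $b_j,b_k$, that are each adjacent to both $a_1$ and $a_3$; and there is at most one ``bad'' vertex of $B$, call it $b_l$, that fails to be adjacent to one of $a_1,a_3$ (say WLOG $a_3b_l\notin E$; the symmetric case is identical by swapping the roles of $a_1$ and $a_3$, which is legitimate because $A$ is a path with endpoints $a_1$ and $a_3$). In the case where no edge is missing, $b_l$ is not defined and every vertex of $B$ is fully adjacent to $\{a_1,a_3\}$.

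The key repartitions of $V(A)\cup V(B)$ into two triangles are
\[
\textbf{(I)}\ \{a_1,a_2,b_m\}\cup\{a_3,b_j,b_k\},
\qquad
\textbf{(II)}\ \{a_2,a_3,b_m\}\cup\{a_1,b_j,b_k\},
\]
where in each case $\{j,k\}=\{1,2,3\}\setminus\{m\}$. I would verify that (I) is a pair of triangles exactly when $a_1b_m,a_3b_j,a_3b_k\in E$ (the remaining edges $a_1a_2,a_2b_m,b_jb_k$ are already present from $A$, the assumption, and $B$), and (II) works exactly when $a_3b_m,a_1b_j,a_1b_k\in E$. If the bad vertex $b_l$ does not exist, (I) works for any choice of $m$. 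Otherwise, with $b_l$ the unique vertex non-adjacent to $a_3$: if $m=l$ then (I) works, because $b_j,b_k\ne b_l$ are fully adjacent and $a_1b_l$ holds; if $m\ne l$ then (II) works, since $a_3b_m$ holds (as $m\ne l$), and $a_1b_j,a_1b_k$ hold because the only possibly-missing edge was incident to $a_3$.

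Replacing $A$ and $B$ in $\mathcal{T}$ by these two triangles produces a $C_3$-factor of $G$, contradicting that $G$ is a counterexample; thus $\|a_2,B\|=0$. The only subtlety is the case analysis based on which (if any) of the six edges from $\{a_1,a_3\}$ to $B$ is absent, but because at most one edge is missing, the two template repartitions (I) and (II) together cover every possibility, and I do not anticipate any real obstacle beyond keeping the bookkeeping straight.
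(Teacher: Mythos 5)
Your proposal is correct and is essentially the paper's own argument: after the same WLOG (all missing edges from $\{a_1,a_3\}$ to $B$ incident to one endpoint, so the other endpoint sees all of $B$), your templates (I) and (II) are exactly the two repartitions $\{a_1a_2b_i, a_3b_{i\oplus1}b_{i\oplus2}\}$ and $\{a_2a_3b_i, a_1b_{i\oplus1}b_{i\oplus2}\}$ used in the paper, chosen according to whether $b_m\in N(a_3)$. No substantive difference.
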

\begin{proof}
Choose notation so that $\left\Vert a_{1},B\right\Vert =3$ and $\left\Vert a_{3},B\right\Vert \geq2$.
Suppose $b_{i}\in N(a_{2})$. Then either $\{a_{1}b_{i\oplus1}b_{i\oplus2},a_{2}a_{3}b_{i}\}$
or $\{a_{1}a_{2}b_{i},a_{3}b_{i\oplus1}b_{i\oplus2}\}$ is a $C_3$-factor of $G[A\cup B]$, depending on whether $b_{i}\in N(a_{3})$.
Regardless, this contradicts the minimality of $G$. 
\end{proof}
Since $\left\Vert \{a_{1},a_{3}\},G\right\Vert \geq4k$, but $\left\Vert \{a_{1},a_{3}\},A\right\Vert =2<4$,
there is a triangle $B:=b_{1}b_{2}b_{3}\in\mathcal{T}$ with $\left\Vert \{a_{1},a_{3}\},B\right\Vert \geq5$.
 Choose notation so that $b_1 ,b_2,b_3\in N(a_1)$ and  $b_2,b_3\in  N(a_3)$. 
 Applying  the claim to $A$ yields $\left\Vert a_{2},B\right\Vert =0$. 
Thus 
\[
2\left\Vert \{b_{1},a_{2}\},A\cup B\right\Vert +\left\Vert \{a_{1},a_{3}\},A\cup B\right\Vert \leq2(4+2)+2(1+3)=20<24=6\cdot2\cdot2.
\]
 Since $2\left\Vert \{b_{1},a_{2}\},G\right\Vert +\left\Vert \{a_{1},a_{3}\},G\right\Vert \geq12k$,
some triangle $C:=c_{1}c_{2}c_{3}\in\mathcal{T}$ satisfies:
\[
2\left\Vert \{b_{1},a_{2}\},C\right\Vert +\left\Vert \{a_{1},a_{3}\},C\right\Vert \geq13.
\]
Then $\left\Vert a_{2},C\right\Vert,\left\Vert \{a_{1},a_3\},C\right\Vert >0$. By Claim, $\left\Vert \{a_{1},a_{3}\},C\right\Vert \leq4$;
 so $\left\Vert \{b_{1},a_{2}\},C\right\Vert \geq5$. Claim applied
 to $\mathcal{T}\cup\{b_{1}a_{1}a_{2},a_{3}b_{2}b_{3}\}\smallsetminus\{A,B\}$ yields
 $\left\Vert a_{1},C\right\Vert =0$. 
So $\left\Vert a_{3},C\right\Vert >0$ and either 
$\left\Vert \{b_{1},a_{2}\},C\right\Vert =6$ 
or  $\left\Vert a_3,C\right\Vert = 3$. Thus some $i\in [3]$
satisfies $c_ia_2,c_ia_3,c_{i\oplus1}b_1,c_{i\oplus2}b_1\in E(G)$. So
$\mathcal T\cup \{c_ia_2a_3,b_1c_{i\oplus1}c_{i\oplus2},a_1b_2b_3\}\smallsetminus \{A,B,C\}$ is a  $C_3$-factor of  $G$.
\end{proof}

Next we use Corollary~\ref{CHC} to prove Theorem~\ref{thm:4-triangles}.
\begin{thm}\label{thm:4-triangles}
  Every standard multigraph $M$
  on $n$ vertices with $\delta(M)\geq\frac{4n-3}{3}$
  contains $\lfloor\frac{n}{3}\rfloor$ independent $4$-triangles.
\end{thm}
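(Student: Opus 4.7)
Our plan is to bootstrap Corollary~\ref{CHC}: first produce a $C_3$-tiling of $G:=G_M$ of the required size $k:=\lfloor n/3 \rfloor$, then upgrade it by local swaps so that every tile contains at least one heavy edge of $M$ and is therefore a $4$-triangle. Two easy consequences of $\delta(M)\ge(4n-3)/3$ will be used throughout. Since $\mu(M)\le2$ gives $d(v)=d_G(v)+d_H(v)\le 2d_G(v)$, we have $\delta(G)\ge \delta(M)/2\ge (4n-3)/6$, which rounds up to at least $2\lfloor n/3\rfloor$; and since $d_G(v)\le n-1$,
\[
d_H(v) \;=\; d(v)-d_G(v) \;\ge\; \tfrac{4n-3}{3}-(n-1) \;=\; \tfrac{n}{3},
\]
so every vertex has at least $n/3$ heavy neighbors. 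The first bound---after removing one or two carefully chosen vertices to preserve the hypothesis of Corollary~\ref{CHC} when $n\not\equiv 0 \pmod 3$---yields the desired $C_3$-tiling $\mathcal T$ of $G$.

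Among all such tilings, I would choose $\mathcal T$ to minimize the number of \emph{light} tiles, meaning tiles containing no heavy edge, and claim this minimum is zero. Suppose otherwise and let $T=x_1x_2x_3\in\mathcal T$ be light. All $d_H(x_i)\ge n/3$ heavy neighbors of each $x_i$ lie outside $T$, so there are at least $n$ heavy edges between $T$ and $V\setminus T$; the overwhelming majority of these enter the other $k-1$ tiles of $\mathcal T$, and by averaging some tile $T'=y_1y_2y_3\in\mathcal T\setminus\{T\}$ receives at least $\lceil 3k/(k-1)\rceil\ge 4$ of them. A second pigeonhole yields a vertex of $T'$, say $y_1$, that is a heavy neighbor of two of the $x_i$'s, say $x_1$ and $x_2$, so $T_1:=x_1x_2y_1$ is a $5$-triangle. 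I would then use the leftover cross heavy edges together with $\delta(G)\ge 2n/3$---which forces many $G$-edges from $x_3$ into $T'$---to arrange $\{x_3,y_2,y_3\}$ as a triangle $T_2$ of $G$ carrying at least one heavy edge. Replacing $\{T,T'\}$ in $\mathcal T$ by $\{T_1,T_2\}$ strictly reduces the number of light tiles, contradicting the choice of $\mathcal T$.

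The main obstacle is the last step of the swap: verifying that after committing two heavy edges to $T_1$, the leftover triple $\{x_3,y_2,y_3\}$ really closes into a $4$-triangle. This is where the interplay between $\delta(G)\ge 2n/3$ and the heavy-degree bound $d_H(v)\ge n/3$ is essential. A short case analysis should suffice: if $T'$ was itself already a $4$-triangle, relabel $T'$ so that its pre-existing heavy edge lies in $\{y_2,y_3\}$; if $T'$ is also light, iterate the exchange once more with a third tile $T''$, produced by applying the heavy-degree bound at $T'$, and repartition $T\cup T'\cup T''$ into three $4$-triangles. The divisibility reduction, the heavy-degree calculation, and the initial pigeonhole are routine by comparison; all of the real work lives in making the swap close.
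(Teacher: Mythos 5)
Your overall strategy is exactly the paper's: get a $C_3$-factor of $G_M$ from Corollary~\ref{CHC} (after the same divisibility trick when $n\not\equiv0\pmod3$), then minimize the number of light tiles and derive a contradiction from a light tile by a local swap. The preliminary computations $\delta(G_M)\ge\lceil\delta(M)/2\rceil$ and $d_H(v)\ge n/3$ are correct. The difficulty is where you say it is, and it is a real gap, not a technicality.

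Counting only \emph{heavy} edges from a light tile $T=x_1x_2x_3$ gives $\ge n$ heavy edges leaving $T$ and, by averaging, some tile $T'$ receiving at least $4$ of them. But $4$ heavy $T$--$T'$ pairs out of $9$, with no control over the other $5$, is not enough to close the swap: for instance the four heavy pairs could be $x_1y_1,x_1y_2,x_2y_1,x_2y_2$ while $x_3$ has no edge at all into $T'$, in which case $\{x_3,y_2,y_3\}$ is not a triangle of $G_M$ and the proposed $T_2$ simply does not exist. Cascading to a third tile $T''$ does not repair this, because the same underconstrained situation can recur, and nothing in the heavy-degree count forces the needed $G_M$-edges. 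The paper's fix is to count the full multigraph quantity $\|A,M-A\|$ rather than heavy degree alone: for a light $A$ one gets $\|A,M-A\|\ge 3\cdot\frac{4n-3}{3}-\|A,A\|=4n-9>12\cdot\frac{n-3}{3}$, so some tile $B$ satisfies $\|A,B\|\ge13$. Since there are only $9$ pairs each of multiplicity at most $2$, this single inequality already forces \emph{every} cross pair to be an edge and at least $4$ of them to be heavy. Ordering $\|a_1,B\|\ge\|a_2,B\|\ge\|a_3,B\|$ gives $\|a_1,B\|\ge5$ and $\|\{a_2,a_3\},B\|\ge7$, whence some $b_i$ has $\|\{a_2,a_3\},b_i\|\ge3$; then $a_2a_3b_i$ and $a_1b_{i\oplus1}b_{i\oplus2}$ are both $4$-triangles and the swap closes in one step. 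Replace your heavy-edge count with the $\|A,M-A\|$ count and the rest of your plan goes through without the awkward case analysis or the iteration to a third tile.
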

\begin{proof}
  We consider three cases depending on $n \pmod 3$.

  \emph{Case 0}: $n \equiv 0\pmod 3$. 
   Since 
  $\delta(G_M)\ge \lceil\frac{1}{2}\delta(M)\rceil \ge \frac{2}{3}n$, Corollary~\ref{CHC}
  implies $M$ has a  triangle factor $\mathcal{T}$.
  Choose $\mathcal T$ 
  having the maximum number of $4$-triangles.
  We are done, unless $\|A\| = 3$ for some  $A=a_1a_2a_3 \in \mathcal{T}$. 
  Since $\|A, M\| \ge 3 \left(\frac{4n-3}{3}\right)$,
  \begin{equation*}
    \|A, M-A\| \ge 4n - 3 - \|A, A\| = 4n - 9 > 
    12 \left( \frac{n-3}{3} \right).
  \end{equation*}
  Thus $\|A, B\| \ge 13$ for some $B=b_1b_2b_3 \in \mathcal{T}$.
  Suppose $\|a_1, B\| \ge \|a_2, B\| \ge \|a_3, B\|$. 
  Then  $5 \le \|a_1, B\| \le 6$ and  
   $\|\{a_2, a_3\}, B\| \ge 7$.
  Hence,  
  $\|\{a_2, a_3\}, b_i\| \ge 3$ for some $i\in[3]$; so $\mathcal T\cup\{a_2a_3b_i,a_1b_{i\oplus1}b_{i\oplus2}\}\smallsetminus\{A,B\}$ is a $4$-triangle factor of $M$.

  \emph{Case 1}: $n\equiv 1\pmod 3$. 
Pick $v\in V$, and set $M':=M-v$. Then $|M'|\equiv 0\pmod 3 $, and $$\delta(M')\ge\delta(M)-\mu(M)\ge \left\lceil\frac{4n-9}{3}\right\rceil=\frac{4(n-1)-3}{3} \ge \frac{4|M'|-3}{3}.$$ By Case 0, $M'$, and also $M$, contains $\lfloor\frac{|M'|}{3}\rfloor=\lfloor\frac {n}{3}\rfloor$ independent $4$-triangles.

  \emph{Case 2}: $n\equiv 2\pmod 3$. Form $M^+\supseteq M$ by adding a new
  vertex $x$ and heavy edges  $xv$ for all $v\in V(M)$. Then $|M^+|\equiv 0\pmod3$ and $\delta(M^+)\ge  \frac{4|M^+|-3}{3}$. By Case 0, $M^+$ contains $\frac{|M^+|}{3}$ independent $4$-triangles. So $ M=M^+-x$ contains $\frac{|M^+|}{3}-1=\lfloor\frac{n}{3}\rfloor$ of them. 
\end{proof}

Now we consider $5$-triangle tilings. First we prove Proposition~\ref{prop:one_vertex_4}, which is also needed in the next section. Then we strengthen Wang's Theorem to standard multigraphs.

\begin{prop}
  Let $T=v_1v_2v_3 \subseteq M$ be a $5$-triangle, and $x \in V(M - T)$.
 If $3\le\left\Vert x, T\right\Vert \le 4$
  then $xe$ is a $(\left\Vert x, T\right\Vert+1)$-triangle for some $e\in E(T)$.
  \label{prop:one_vertex_4}
\end{prop}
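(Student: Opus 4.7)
My plan is as follows. Since $M$ is standard and $\|T\|=5$, the three edge-multiplicities of $T$ lie in $\{1,2\}$ and sum to $5$, so $T$ has exactly one light edge and two heavy edges. After relabeling, take $v_2v_3$ to be the light edge, so $\mu(v_1v_2) = \mu(v_1v_3) = 2$ and $\mu(v_2v_3) = 1$. Write $a_i := \mu(x,v_i)$, so $a_1 + a_2 + a_3 = k := \|x,T\| \in \{3,4\}$ with each $a_i\in\{0,1,2\}$.

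The conclusion asks for some $e = v_iv_j \in E(T)$ such that a $(k+1)$-triangle lives on $\{x, v_i, v_j\}$; this is possible precisely when $a_i, a_j \ge 1$ (so every pair among the three vertices is joined by at least one edge, giving a $C_3$) and $a_i + a_j + \mu(v_iv_j) \ge k+1$ (so enough total multiplicity is present). Given both conditions, a $(k+1)$-triangle on $\{x,v_i,v_j\}$ is produced by keeping one copy of each pair and adjusting heavy-edge copies until the edge count is exactly $k+1$, all while the $C_3$ is preserved.

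I would carry out a short case split on the support pattern of $(a_1,a_2,a_3)$. If some $a_\ell = 0$, the only candidate pair is $\{i,j\} = \{1,2,3\}\setminus\{\ell\}$, and $e = v_iv_j$ gives weight $a_i + a_j + \mu(v_iv_j) = k + \mu(v_iv_j) \ge k+1$. Otherwise every $a_i \ge 1$: for $k=3$ this forces $(a_1,a_2,a_3) = (1,1,1)$, and either heavy edge contributes weight $1+1+2 = 4 = k+1$ directly; for $k=4$ exactly one coordinate equals $2$ and is incident to at least one heavy edge of $T$, whose weight is $2+1+2 = 5 = k+1$. In every case an $e$ meeting the two required properties exists, so the $(k+1)$-triangle exists.

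The main obstacle is interpretive rather than computational: in the $a_\ell = 0$ subcase with $\mu(v_iv_j) = 2$, the induced triangle on $\{x, v_i, v_j\}$ has weight $k+2$, so one must read ``$xe$ is a $(k+1)$-triangle'' as asserting the existence of a $(k+1)$-triangle on the three-vertex set $\{x\}\cup V(e)$ (obtained by dropping one surplus copy of the heavy edge $v_iv_j$ while keeping $C_3$, since the remaining multiplicity $1$ suffices), rather than as an equality on the full induced submultigraph. This matches the paper's flexible convention that $xe$ identifies a triangle on a given three-vertex set, not necessarily the full induced submultigraph. With this reading the case analysis is mechanical and the proposition follows.
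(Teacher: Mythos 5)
Your proposal is correct and takes essentially the same route as the paper: a short case analysis on how the multiplicities $\|x,v_i\|$ sit relative to the two heavy edges of the $5$-triangle, then choosing an edge $e\in E(T)$ carrying enough weight and an actual $C_3$. Your explicit handling of the ``exactly versus at least $\|x,T\|+1$ edges'' point is consistent with the paper's (loose) convention --- the paper's own two-line proof produces the same overshoot (e.g.\ when $\mu(x,v_1)=2$) and implicitly reads ``$xe$ is a $(\|x,T\|+1)$-triangle'' as asserting such a triangle on those three vertices, which is all that later applications use.
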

\begin{proof}
  Suppose 
  $v_1v_2,v_1v_3\in E_H$. If $N(x)\subseteq \{v_2v_3\}$ then $xv_2v_3$ is a $(\left\Vert x, T\right\Vert+1)$-triangle.
  Else,  $\|x,v_1v_i\|\ge\|x,T\|-1$ for some  $i \in \{2, 3\}$. 
  So $xv_1v_i$ is a $(\|x,T\|+1)$-triangle.
\end{proof}



\begin{thm}\label{thm:stWang} 
  Every standard multigraph $M$ with 
  $\delta(M) \ge \frac{3n - 3}{2}$ contains $\lfloor\frac{n}{3}\rfloor$ independent $5$-triangles.
\end{thm}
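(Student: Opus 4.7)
Plan. I'd follow the structure of the proof of Theorem~\ref{thm:4-triangles}, splitting on $n \pmod 3$; the core is Case $n \equiv 0$ and the other cases reduce to it.

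For Case $n \equiv 0 \pmod 3$: The inequality $(3n-3)/2 \ge (4n-3)/3$ for $n \ge 3$ lets me invoke Theorem~\ref{thm:4-triangles} to obtain a $4$-triangle factor. Among all tilings $\mathcal T$ of $M$ by $4$- and $5$-triangles, choose one maximizing first (i) the number of $5$-triangles and then (ii) $\sum_{T \in \mathcal T}\|T\|$. If every tile is a $5$-triangle we are done; otherwise fix a $4$-triangle $A = a_1a_2a_3 \in \mathcal T$ with heavy edge $a_1a_2$. Summing degrees over $V(A)$ gives $\|A, M - A\| \ge 3\delta(M) - 2\|A\| \ge (9n-25)/2$, and averaging over the $(n-3)/3$ other tiles produces some $B = b_1b_2b_3 \in \mathcal T$ with $\|A,B\| \ge 14$. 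The decisive claim is that such a pair $(A,B)$ admits a partition of $V(A) \cup V(B)$ into two vertex-disjoint $5$-triangles, contradicting maximality. I'd establish this by a case analysis on $\|a_3, B\|$: when $\|a_3, B\| \ge 4$, Proposition~\ref{prop:one_vertex_4} supplies a $5$-triangle $\{a_3, b_j, b_k\}$, and the complementary triple $\{a_1, a_2, b_\ell\}$ is forced to be a $5$-triangle by the heavy edge $a_1a_2$ and the residual cross-mass at a well-chosen $\ell$; when $\|a_3, B\|$ is small, the cross-weight is concentrated on $\{a_1, a_2\}$, forcing $\mu(a_1, b_\ell) = \mu(a_2, b_\ell) = 2$ for all $\ell$, so the partition $\{a_1, a_2, b_\ell\} \cup \{a_3, b_{\ell'}, b_{\ell''}\}$ works for an $\ell$ chosen to align $a_3$'s cross-edges with the heavy edges of $B$.

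For Case $n \equiv 2 \pmod 3$: Adjoin a new vertex $x$ joined by a heavy edge to every $v \in V(M)$ and let $M^+$ be the resulting multigraph. Then $|M^+| \equiv 0 \pmod 3$ and $\delta(M^+) \ge (3|M^+|-3)/2$, so Case 0 furnishes a $5$-triangle factor of $M^+$; deleting the unique tile containing $x$ leaves $\lfloor n/3 \rfloor$ independent $5$-triangles in $M$. For Case $n \equiv 1 \pmod 3$: Delete a vertex $v \in V(M)$ and apply Case 0 to $M - v$. When $n$ is even the ceiling on $\delta(M) \ge (3n-3)/2$ absorbs the loss $\delta(M) - \delta(M-v) \le 2$; when $n$ is odd the naive reduction is short by one in the degree bound, and a more careful choice of $v$---exploiting that every vertex of $M$ has at least $(n-1)/2$ heavy neighbours---or a direct near-factor argument in the style of Case 0 (with $v$ left uncovered) is required.

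The principal obstacle is the partition claim in Case 0. Averaging alone yields only $\|A,B\| \ge 14$, yet configurations exist at $\|A,B\|=14$---for instance $\mu(a_i,b_j)=2$ for $i \in \{1,2\}$ and all $j$, with $\mu(a_3,b_1)=\mu(a_3,b_2)=1$ and $\mu(a_3,b_3)=0$---in which no $2{+}1$ partition of $V(A) \cup V(B)$ yields two $5$-triangles. Overcoming these bad configurations will likely require a sharper averaging that exploits the structure of the remaining tiles (redistributing the global degree budget of $a_3$), invoking the secondary tie-breaker on $\sum \|T\|$, or a three-tile swap involving an auxiliary tile $C$ on which $a_3$ has substantial cross-weight.
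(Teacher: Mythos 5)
Your overall architecture matches the paper's proof---start from the tiling of Theorem~\ref{thm:4-triangles}, maximize the number of $5$-triangles, try to trade a deficient tile $A$ with another tile $B$, and reduce $n\equiv2\pmod3$ by adding a heavily dominating vertex---but the decisive step of Case~0 is left open, and your own example shows it cannot be closed as you set it up. Averaging $\|A,M-A\|\ge 3\delta(M)-2\|A\|$ over the other tiles only yields $\|A,B\|\ge14$, and at $14$ there are configurations (you exhibit one) in which no partition of $V(A)\cup V(B)$ gives two independent $5$-triangles; you list possible remedies but do not carry any of them out, so the contradiction is never actually derived. The paper's fix is precisely the ``sharper averaging redistributing the degree budget of $a_3$'' that you gesture at: it counts $f(B):=\|A,B\|+\|a_3',B\|$, i.e.\ it weights the non-heavy vertex twice, where $a_3'$ is either $a_3$ or the single uncovered vertex when $n\equiv1\pmod3$. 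Summing the four degrees $d(a_1)+d(a_2)+d(a_3)+d(a_3')\ge 4\cdot\frac{3n-3}{2}$ and subtracting the internal contribution $\|A,A'\smallsetminus A\|$ gives $f(B)\ge19$ for some $B\in\mathcal T-A$, which forces $\|a,B\|\ge4$ for $a\in\{a_3,a_3'\}$ of larger attachment. Then Proposition~\ref{prop:one_vertex_4} together with the observation that $a_1a_2b$ is a $5$-triangle whenever $\|\{a_1,a_2\},b\|\ge3$ settles the three cases $\|a,B\|\in\{4,5,6\}$; when $B$ is only a $4$-triangle a phantom parallel edge is added so that one of the two new tiles may be the upgraded $B$ while the other is a genuine $5$-triangle, still increasing the count. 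Note that your bad configuration has $f(B)=16<19$, so it simply never arises under the paper's weighted count.

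Separately, your treatment of $n\equiv1\pmod3$ is incomplete exactly where you say it is: deleting a vertex loses up to $2$ in degree, and when $n$ is odd the resulting bound falls short of $\frac{3(n-1)-3}{2}$ by one, so the reduction to Case~0 does not go through; you defer this to ``a more careful choice of $v$ or a direct near-factor argument'' without providing it. The paper sidesteps the issue entirely by merging $n\equiv0$ and $n\equiv1$ into a single case: the tiling from Theorem~\ref{thm:4-triangles} leaves at most one vertex uncovered, and that leftover vertex is exactly the $a_3'$ fed into the weighted count $f$, so the extra degree $d(a_3')$ is what rescues the averaging rather than being discarded. Your $n\equiv2$ reduction agrees with the paper's Case~2 and is fine.
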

\begin{proof}
  Consider two cases depending on whether $n\equiv 2\pmod 3$. 

  \emph{Case 1}:
  $n\not\equiv2\pmod 3$. 
  By Theorem~\ref{thm:4-triangles}, $M$ has a tiling 
  $\mathcal{T}$ consisting of $\lfloor\frac{n}{3}\rfloor$ independent $4$- and $5$-triangles.  
  Over all such tilings, 
  select $\mathcal{T}$ with the maximum number of $5$-triangles.
  We are done, unless there exists $A=a_1a_2a_3 \in \mathcal{T}$ such
  that $\left\Vert A \right\Vert = 4$.
  Assume $a_1a_2$ is the heavy edge of $A$.  
  By the case, $L:=V\smallsetminus\bigcup\mathcal T$ has at most one vertex.
  If $L\ne\emptyset$ then let $a'_3\in L$; otherwise set $a'_3:=a_3$. Also set
  $A':=A+a'_3$. Then 
  $\|a'_3,A\|\le2+2(|A'|-3)=2|A'|-4$, since otherwise $G[A']$ contains a $5$-triangle. So $\|A,A'\smallsetminus A\|\le4(|A'|-3)$.

  For  $B \in \mathcal{T}$, define 
   $ f(B) := \|A,B\|+ \|a_3', B\|$.
  Then $f(A) = 8+\|a'_3,A\|\le4+2|A'|$. 
  So
  \begin{align*}  
    \sum_{B \in \mathcal{T} 
  } f(B)
  &=d(a_1) + d(a_2) + d(a_3)+d(a'_3) - \|A,A'\smallsetminus A\|
  \ge 4\cdot \frac{3n - 3}{2}
  -\|A,A'\smallsetminus A\|\\
  &\ge 6n-6 -4(|A'|-3) = 6(n-|A'|)+(4+2|A'|)+2\\
  &> 18\left(|\mathcal{T}| - 1\right) + f(A).
\end{align*}
Thus $f(B) \ge 19$ for some $B\in \mathcal T-A$. If $B$ is
a $4$-triangle then set $B':=B+e'$, where $e'$ is parallel to some $e\in E(B)$
with $\mu(e)=1$, and set $M':=M+e'$. Otherwise, set $B':=B$ and $M':=M$. It suffices to prove that $M'[A'\cup B']$ contains two independent $5$-triangles, since in either case another $5$-triangle can be added to  $\mathcal T$, a contradiction.

Label the vertices of $B'$
as $b_1, b_2, b_3$ so that $b_1b_2$ and $b_1b_3$ are heavy
edges. 
Since $a_1a_2$ is a heavy edge, 
if $\left\Vert \{a_1, a_2\}, b\right\Vert \ge 3$ then $a_1a_2b$ is a $5$-triangle for all $b\in V(B)$.
Consider three cases based
on $k:=\max\{\|a_3,B\|,\|a'_3,B\|\}$. Let $a\in\{a_3,a'_3\}$ satisfy $\|a,B\|=k$. 
Since $f(B) \ge 19$, we have $4 \le \left\Vert a, B\right\Vert \le 6$.  

If $\left\Vert a, B\right\Vert = 4$ then 
$\left\Vert \{a_1, a_2\}, B\right\Vert \ge 11$. By Proposition~\ref{prop:one_vertex_4},  there exists 
$i \in [3]$ such that $ab_ib_{i\oplus1}$ is a $5$-triangle,  and
$a_1a_2b_{i\oplus2}$ is another disjoint $5$-triangle.

If $\left\Vert a, B\right\Vert = 5$ then 
$\left\Vert \{a_1, a_2\}, B\right\Vert \ge 9$. So there exists $ i \in \{2,3\}$ such that 
$a_1a_2b_i$ is a $5$-triangle; and $ab_1b_{5-i}$ is another $5$-triangle.

Finally, if $\left\Vert a, B\right\Vert = 6$ then 
$\left\Vert \{a_1, a_2\}, B\right\Vert \ge 7$. So there exists $i \in [3]$ such that $a_1a_2b_i$ is
a $5$-triangle; and
 $ab_{i\oplus 1}b_{i\oplus2}$ is another $5$-triangle.

\emph{Case 2}: $n\equiv 2\pmod 3$. Form $M'\supseteq M$ by adding a vertex $x$
and heavy edges $xv$ for all $v\in V$. Then $|M'|\equiv 0\pmod3$ and $\delta(M')\ge  \frac{3|M'|-3}{2}$. By Case 1, $M'$ contains $\frac{|M^+|}{3}$ independent $5$-triangles. So $ M=M'-x$ contains $\frac{|M'|}{3}-1=\frac{n}{3}$ of them. 
\end{proof}

\section {Main Theorem}
In this section we prove our main result, Theorem~\ref{th:main}. Let $M$ be a
standard multigraph with $\delta(M)\ge\frac{4n - 3}{3}$. We start with  three Propositions used in the proof.
\begin{prop} 
  Suppose $T=v_1v_2v_3 \subseteq M$ is a $5$-triangle, and $x_1,x_2 \in V(M-T)$ are distinct vertices with
  $\left\Vert \{x_1, x_2\}, T \right\Vert \ge 9$.
  Then $M[\{x_{1}, x_{2}\} \cup V(T)]$ has a factor containing
  a $5$-triangle and an edge $e$ such that $e$ is heavy if $\min_{i\in[2]}\{\|x_i,T\|\}\ge4$. 
  \label{prop:two_vertices_9} 
\end{prop}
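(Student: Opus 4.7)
The plan is to construct the desired factor by trying a short list of candidates and arguing that at least one of them works. WLOG relabel the vertices of $T$ so that $v_1v_2$ and $v_1v_3$ are the two heavy edges (and $v_2v_3$ is light). Write $a_i := \left\Vert x_i, T\right\Vert$ and $\mu_i^j := \mu(x_i, v_j)$; then $a_1 + a_2 \geq 9$ and each $a_i \leq 6$, and WLOG $a_1 \leq a_2$, so $a_1 \geq 3$ and $a_2 \geq 5$. The first candidate I would try is the factor $\{T, x_1x_2\}$, with $T$ serving as the $5$-triangle and $x_1x_2$ as the edge: this immediately succeeds when $\mu(x_1, x_2) = 2$, and also when $\mu(x_1, x_2) = 1$ and $\min\{a_1, a_2\} \leq 3$.

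When this first candidate is inadequate, I would turn to factors of the form $\{x_iv_jv_k,\{x_{3-i}, v_\ell\}\}$ with $\{j, k, \ell\} = [3]$. The $5$-triangle condition on $\{x_i, v_j, v_k\}$ reduces to $\mu_i^j + \mu_i^k + \mu(v_j, v_k) = 5$, and a case-check in the spirit of Proposition~\ref{prop:one_vertex_4} shows that for every $x_i$ with $a_i \geq 4$ some such triangle exists (the free index $\ell$ then determined by $x_i$'s distribution). Validity of the resulting factor hinges on $\mu(x_{3-i}, v_\ell) \geq 1$ (and $=2$ when $\min\{a_1, a_2\} \geq 4$). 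If even this fails, I would try factors of the form $\{\{x_1, x_2, v_j\},\{v_k, v_\ell\}\}$ with $v_jv_kv_\ell = V(T)$: here $\{v_k, v_\ell\}$ is heavy precisely when $v_j \in \{v_2, v_3\}$, and $\{x_1, x_2, v_j\}$ is a $5$-triangle iff $\mu(x_1, x_2) + \mu_1^j + \mu_2^j = 5$.

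The argument would then conclude by case analysis on $(a_1, a_2) \in \{(3,6), (4,5), (4,6), (5,5), (5,6), (6,6)\}$ together with the finitely many distributions $(\mu_i^1, \mu_i^2, \mu_i^3)$, verifying in each case that at least one of the three candidate types yields a valid factor satisfying the heavy-edge conclusion. The main obstacle is the boundary case $a_1 = 3$: when $x_1$'s distribution prevents $x_1$ from being in any $5$-triangle of $M[\{x_1,x_2\}\cup V(T)]$ (e.g., distribution $(1,1,1)$, or one missing the central vertex $v_1$), the only candidate $5$-triangle through some $x_i$ is $x_2v_2v_3$ (which exists because $a_2 = 6$ forces all $\mu_2^j = 2$), and one must show that the complementary pair $\{x_1, v_1\}$ forms an edge of $M$. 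This is the step where the sum condition $a_1 + a_2 \geq 9$ must be exploited most carefully, excluding the degenerate distribution $\mu_1^1 = 0$ via a finer combinatorial observation about how $\mu(x_1,x_2)$ and $\mu(x_1, \cdot)$ interact with $T$.
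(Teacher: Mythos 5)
Your overall plan---exhaust a short list of candidate factors and case-split on the degree sequences---is viable, and the first two candidate families in fact suffice, but the sketch goes wrong in exactly the place you identify as the main obstacle. First, a structural point: the paper's proof never needs any candidate involving the pair $\{x_1,x_2\}$ (your first and third candidate types); every factor it constructs has the shape $\{x_iv_jv_k,\,x_{3-i}v_\ell\}$, and the choice of $i,j,k,\ell$ is simply adapted to the distributions. Nothing in the hypotheses constrains $\mu(x_1,x_2)$, so making $x_1x_2$ a cornerstone candidate forces you into the extra degree-of-freedom you then have to chase in the final paragraph.

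The concrete error is in your treatment of $a_1=3$, $a_2=6$. You assert that the only available $5$-triangle through an $x_i$ is $x_2v_2v_3$, but $a_2=6$ forces $\mu_2^j=2$ for all $j$, so \emph{all three} of $x_2v_1v_2$, $x_2v_1v_3$, $x_2v_2v_3$ are (at least) $5$-triangles. Consequently, the complementary singleton need not be $v_1$; you get to \emph{choose} which vertex $v_\ell$ is left over. Since $a_1=3>0$, some $\mu_1^\ell\ge1$, and then $\{x_2v_{\ell\oplus1}v_{\ell\oplus2},\,x_1v_\ell\}$ is a valid factor (a light edge suffices here because $\min\{a_1,a_2\}=3<4$). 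Your proposed fix---``exclude the degenerate distribution $\mu_1^1=0$''---cannot work: $\mu_1=(0,1,2)$ satisfies all hypotheses with $a_1=3$, $a_2=6$, so that distribution is genuinely present and must be handled, not excluded. Likewise, appealing to ``how $\mu(x_1,x_2)$ interacts with $T$'' is a red herring; $\mu(x_1,x_2)$ is unconstrained and plays no role in the correct argument. With this case repaired (choose the spare vertex adaptively rather than fixing it to $v_1$), your case analysis would go through, but as written the sketch contains a wrong claim and an unfixable ``exclusion'' step at the crux.
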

\begin{proof}
  Label so that $v_1v_2,v_1v_3\in E_H$ and
  $\left\Vert x_1, T \right\Vert \ge \left\Vert x_2, T \right\Vert$.
  
  First suppose  $\|x_2,T\|\ge4$. If $x_2v_i\in E_H$ for some $i\in\{2,3\}$ then $\{x_1v_1v_{5-i},x_2v_i\}$ works. Else  $V(T)\subseteq N(x_2)$. Also $x_1v_j\in E_H$ for some $j\in\{2,3\}$. So $\{x_1v_j,x_2v_1v_{5-j}\}$ works.

Otherwise, $\left\Vert x_2, T \right\Vert = 3$
  and $\left\Vert x_1, T \right\Vert = 6$.
So  $\{x_1v_{i\oplus1}v_{i\oplus2},x_2v_i\}$ works for some $i\in[3]$.
\end{proof}

\begin{prop}
  Suppose $T=v_1v_2v_3 \subseteq M$ is a $5$-triangle, and $e_{1},e_{2} \in E(M-T)$
  are independent heavy edges with
  $\left\Vert e_{1}, T \right\Vert \ge 9$ and
  $\left\Vert e_{2}, T \right\Vert \ge 7$.
  Then $M[e_{1}\cup e_{2}\cup V(T)]$ contains 
  two independent $5$-triangles.
  \label{prop:two_heavy_edges_9_7} 
\end{prop}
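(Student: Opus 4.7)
The plan is to find two distinct indices $i,j\in[3]$ such that the triples $\{x_1,y_1,v_i\}$ and $\{x_2,y_2,v_j\}$ each span a $5$-triangle, where $e_1=x_1y_1$ and $e_2=x_2y_2$. Since $e_1$ and $e_2$ are independent and $i\ne j$, the two triangles will automatically be vertex-disjoint, leaving the remaining $v_k$ ($k\ne i,j$) unused among the seven vertices of $M[e_1\cup e_2\cup V(T)]$.

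The key observation is that if $v\in V(T)$ and $e=xy$ is a heavy edge with $\|e,v\|=\mu(x,v)+\mu(y,v)\ge 3$, then, because $\mu\le 2$, both $xv$ and $yv$ must be edges. Together with the heavy edge $xy$, the triple $\{x,y,v\}$ spans a triangle with at least $2+3=5$ edges, and thus contains a $5$-triangle. So it suffices to locate indices $i\ne j$ with $\|e_1,v_i\|\ge 3$ and $\|e_2,v_j\|\ge 3$.

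This is a short counting step. Set $a_i:=\|e_1,v_i\|$ and $b_j:=\|e_2,v_j\|$; each is at most $4$. From $\sum_{i}a_i=\|e_1,T\|\ge 9$, if at most one $a_i$ were $\ge 3$ then $\sum a_i\le 4+2+2=8$, a contradiction, so at least two indices satisfy $a_i\ge 3$. From $\sum_j b_j=\|e_2,T\|\ge 7$, if no $b_j$ were $\ge 3$ then $\sum b_j\le 6$, a contradiction, so at least one $j$ satisfies $b_j\ge 3$. Fix any such $j$, and then choose $i\ne j$ with $a_i\ge 3$ (possible since two indices qualify for $e_1$).

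I do not anticipate any real obstacle; the hypothesis thresholds $9$ and $7$ are exactly what is needed so that $e_1$ can reach two different vertices of $T$ sufficiently well, and $e_2$ can reach at least one. Note that, unlike Proposition~\ref{prop:two_vertices_9}, no attempt is made here to produce a heavy edge alongside a triangle, which is why the weaker degree demand $\|e_2,T\|\ge 7$ suffices on the second edge.
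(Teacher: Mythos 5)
Your proof is correct and is essentially the same as the paper's: both arguments locate two indices $i$ with $\|e_1,v_i\|\ge 3$ and one index $j$ with $\|e_2,v_j\|\ge 3$ by the same $\mu\le 2$ counting, then pick $i\ne j$ and observe that a heavy edge $e$ with $\|e,v\|\ge 3$ spans a $5$-triangle with $v$. No gap.
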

\begin{proof}
  Choose notation  so that
  $\left\Vert e_1, v_i \right\Vert \ge 3$ for both $i \in [2]$.
  There exists $j\in [3]$ so that
  $\left\Vert e_2, v_j \right\Vert \ge 3$.
  Pick $i \in [2]-j$. Then
  $e_1v_i$ and $e_2v_j$ are disjoint $5$-triangles. 
\end{proof}

\begin{prop}
Suppose $T \subseteq M$ is a $5$-triangle, and 
  $xyz$ is a path in $H_M-T$.
  If $\|xz, T\| \ge 9$ and $\|y, T\| \ge 1$ then
  $M[\{x, y, z\} \cup V(T)]$ has a factor containing a $5$- and a $4$-triangle.
  \label{prop:heavy_path_5_4_triangles} 
\end{prop}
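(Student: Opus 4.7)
The plan is a structured case analysis on the intersection of $S_y := \{i : \mu(y, v_i) \ge 1\}$ and $S_z := \{i : \mu(z, v_i) \ge 1\}$. I would label $T$ so that $v_1v_2, v_1v_3$ are the two heavy edges (forced since $T$ is a $5$-triangle), and by the $x \leftrightarrow z$ symmetry assume $\|x, T\| \ge \|z, T\|$; then $\|x, T\| \ge 5$ and hence $a_i := \mu(x, v_i) \ge 1$ for every $i \in [3]$. Also write $b_i := \mu(y, v_i)$, $c_i := \mu(z, v_i)$.

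The driving observation is: for $\ell \in \{2, 3\}$ the triangle $\{x, v_j, v_k\}$ on $\{j, k\} = [3] \setminus \{\ell\}$ has weight $\|x, T\| - a_\ell + \mu(v_j v_k) \ge 5 - 2 + 2 = 5$, and $\{y, z, v_\ell\}$ has weight $2 + b_\ell + c_\ell \ge 4$ whenever $b_\ell, c_\ell \ge 1$. So if $S_y \cap S_z \cap \{2, 3\} \ne \emptyset$, I pick such $\ell$ and use $\{x, v_j, v_k\} \cup \{y, z, v_\ell\}$.

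When $S_y \cap S_z = \{1\}$ (so $b_1, c_1 \ge 1$): if $\max(b_1, c_1) = 2$, I use $\{y, z, v_1\}$ (weight $\ge 5$) together with $\{x, v_2, v_3\}$ (weight $a_2 + a_3 + 1 \ge (\|x, T\| - a_1) + 1 \ge 4$). If $b_1 = c_1 = 1$, I split on $|S_y|$. For $|S_y| = 1$: when $a_1 = 2$ and $\|x, T\| = 5$ the pair $\{x, y, v_1\}, \{z, v_2, v_3\}$ works (since $\|z, T\| \ge 4$ gives $c_2 + c_3 \ge 3$), while when $a_2 = a_3 = 2$ (i.e., $\|x, T\| = 6$ or $a_1 = 1$) the pair $\{x, v_2, v_3\}, \{y, z, v_1\}$ works. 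For $|S_y| \ge 2$, pick $m \in S_y \setminus \{1\}$; then $c_m = 0$ and $\|z, T\| \ge 3$ force $c_{r} = 2$ for $r := [3] \setminus \{1, m\}$ and $\|z, T\| = 3$, hence $\|x, T\| = 6$, and then $\{x, y, v_m\}, \{z, v_1, v_r\}$ works.

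The last case $S_y \cap S_z = \emptyset$ forces $|S_y| = 1$ via the count $\|z, T\| \le 2(3 - |S_y|)$; write $S_y = \{\ell_0\}$, so $c_{\ell_0} = 0$. The pair $\{x, y, v_{\ell_0}\}, \{z, v_j, v_k\}$ works whenever $a_{\ell_0} + b_{\ell_0} \ge 3$; in the remaining case $a_{\ell_0} = b_{\ell_0} = 1$ the hypothesis $\|x, T\| \ge 5$ forces $a_i = 2$ for the other two indices, and then $\|xz, T\| \ge 9$ combined with $c_{\ell_0} = 0$ pins down $c_j, c_k$ tightly enough that the same partition succeeds with $\{z, v_j, v_k\}$ as the $5$-triangle (weight $\ge 5$) and $\{x, y, v_{\ell_0}\}$ as the $4$-triangle (weight $4$). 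The main obstacle is the bookkeeping among these subcases; the key lever unlocking each is the combination of $a_i \ge 1$ for all $i$ (from $\|x, T\| \ge 5$) with the rigid heavy/light structure of the $5$-triangle $T$ and the arithmetic of $\|xz, T\| \ge 9$, which in every degenerate configuration leaves just enough room to find a valid partition.
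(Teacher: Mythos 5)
Your proof is correct, but it follows a genuinely different decomposition from the paper's. You fix the labeling of $T$ by putting the two heavy edges at $v_1v_2$ and $v_1v_3$, and then run an exhaustive case analysis on $S_y\cap S_z$. The paper instead normalizes by picking $v_1\in N(y)$ (not by heaviness) and splits only on $\|x,T\|\in\{5,6\}$ and then on whether $zv_1$ (resp.\ $xv_1$) is absent/light/heavy; this makes $v_1$ serve double duty as the ``isolated'' vertex to pair with $x$ or $z$ and keeps the case tree to about five leaves. Your version is longer but buys a more mechanical structure: the single observation that $\{x,v_j,v_k\}$ with $1\in\{j,k\}$ automatically has weight $\ge 5$ disposes of every configuration where $y,z$ share a neighbor off $v_1$, and the remaining subcases are pinned down arithmetically from $\|xz,T\|\ge9$. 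One small remark: your split on $|S_y|$ inside the $S_y\cap S_z=\{1\}$ case is unnecessary — when $|S_y|\ge 2$ you deduce $\|x,T\|=6$, which forces $a_2=a_3=2$, so the partition $\{x,v_2,v_3\},\{y,z,v_1\}$ from your other sub-subcase already works. Apart from that redundancy, each of your configurations does produce a valid $5$-triangle and $4$-triangle, and the cases are exhaustive (in particular the constraint $a_i\ge1$ for all $i$, which you correctly extract from $\|x,T\|\ge5$, and $c_j+c_k\ge3$ implying $c_j,c_k\ge1$, ensure each claimed tile actually contains a $C_3$).
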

\begin{proof}
  Choose notation so that $\|x, T\| \ge \|z, T\|$, and $T=v_1v_2v_3$ with 
  $v_1\in N(y)$. We identify a $4$-triangle $A$ and a $5$-triangle $B$ depending on several cases.

  Suppose $\|x, T\| = 6$ and $\|z, T\| \ge 3$.
  If $zv_1\in E$ then  set $A:=yzv_1$ and $B:=xv_2v_3$; else 
  set $A:=zv_2v_3$ and $B:= xyv_1$. Otherwise $\|x, T\| = 5$ and $\|z, T\| \ge 4$.
  
  If $zv_1\notin E$ then set
  $A:=xyv_1$ and $B:=zv_2v_3$. Otherwise  $zv_1\in E$.
  
  If $zv_1$ is  heavy then set $A:=xv_2v_3$ and $B:=zyv_1$; if $xv_1$ is  light 
  then set $A:=zyv_1$ and $B:=xv_2v_3$. Otherwise $zv_1$ is  light  and $xv_1$ is  heavy. Set
  $A:=zv_2v_3$ and $B:=xyv_1$. 
\end{proof}
\begin{proof}[Proof of Theorem \ref{th:main}]
  We consider three cases depending on $n\pmod 3$.

  \bigskip\noindent
  \emph{Case 0}: $n\equiv 0\pmod 3$. Let $n=:3k$, and let $M$ be  a maximal counterexample. Let $\mathcal{T}$ be a maximum
$T_{5}$-tiling of $M$ and $U=\bigcup_{T\in\mathcal{T}}V(T)$. 
\begin{claim}\label{clm:1}
$|\mathcal{T}|=k-1$.\end{claim}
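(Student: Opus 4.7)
The plan is to prove both inequalities $|\mathcal T|\le k-1$ and $|\mathcal T|\ge k-1$.

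For the upper bound I would argue as follows. If $|\mathcal T|=k$, then $\mathcal T$ is a $5$-triangle factor of $M$. Because every $5$-triangle contains a $4$-triangle on the same three vertices as a spanning submultigraph (drop one edge of a heavy pair), one may simply view a single tile $T\in\mathcal T$ as a $4$-triangle and keep the remaining $k-1$ tiles as $5$-triangles. This yields a factor of $M$ with one $4$-triangle and $k-1$ $5$-triangles, contradicting that $M$ is a counterexample to Theorem~\ref{th:main}. Hence $|\mathcal T|\le k-1$.

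For the lower bound I would suppose instead that $|\mathcal T|\le k-2$ and set $L:=V\smallsetminus\bigcup\mathcal T$, $\ell:=|L|\ge 6$. Maximality of $\mathcal T$ gives two structural facts: (i) $M[L]$ contains no $5$-triangle, and (ii) for every $T\in\mathcal T$, $M[V(T)\cup L]$ has no pair of vertex-disjoint $5$-triangles (else swap $T$ for that pair to enlarge $\mathcal T$). I would then count: for each $v\in L$, $d(v)\ge 4k-1$ and $d_L(v)\le 2(\ell-1)$, so $\|v,U\|\ge 4k-2\ell+1$, hence $\|L,U\|\ge\ell(4k-2\ell+1)$. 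Distributing across the $(3k-\ell)/3$ tiles, some $T=v_1v_2v_3\in\mathcal T$ carries $\|L,T\|\ge \tfrac{3\ell(4k-2\ell+1)}{3k-\ell}$, a substantial fraction of the maximum $6\ell$. For such a $T$, I would split into cases according to the heavy-edge structure of $M[L]$: when $L$ contains two disjoint heavy edges each well-connected to $T$, Proposition~\ref{prop:two_heavy_edges_9_7} applies directly; when $L$ only contains a heavy path with suitable connections to $T$, Proposition~\ref{prop:heavy_path_5_4_triangles} yields a $5$-triangle together with a $4$-triangle that a local rewiring upgrades to a second $5$-triangle; and when $H_M[L]$ is sparse, one applies Proposition~\ref{prop:two_vertices_9} to a pair $x_1,x_2\in L$ with $\|\{x_1,x_2\},T\|\ge 9$ and then uses a third vertex of $L$ to extend the leftover heavy edge into a second $5$-triangle. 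In each case two disjoint $5$-triangles appear inside $V(T)\cup L$, violating (ii).

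The main obstacle will be the case analysis on $H_M[L]$, especially when $H_M[L]$ has almost no edges: then Propositions~\ref{prop:two_heavy_edges_9_7} and \ref{prop:heavy_path_5_4_triangles} do not apply directly, and one must bootstrap using Proposition~\ref{prop:one_vertex_4} repeatedly, or refine the averaging so that the chosen $T$ has $\|L,T\|$ concentrated on a triple $\{x_1,x_2,x_3\}\subseteq L$ whose joint neighborhood in $T$ is rich enough to produce the two $5$-triangles. Orchestrating this structural split against the single averaging bound, and verifying the bound is strong enough in the boundary values $\ell=6$ and small $k$, is where the bulk of the technical work should lie.
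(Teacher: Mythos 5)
Your upper bound ($|\mathcal{T}|\le k-1$ because a $5$-triangle contains a spanning $4$-triangle, so a $T_5$-factor would contradict $M$ being a counterexample) is fine and matches what the paper leaves implicit. The genuine gap is in your lower bound: you never invoke the edge-maximality of the counterexample $M$, which is the engine of the paper's argument, and your substitute --- a maximum tiling $\mathcal{T}$ of size at most $k-2$, the leftover set $L$ with $\ell\ge 6$, and a swap at a \emph{single} tile $T$ --- cannot be completed, because $L$ carries no guaranteed internal edge structure. Nothing in $\delta(M)\ge 4k-1$ prevents $G_M[L]$ from being edgeless: each $v\in L$ is only forced to have at least $k$ heavy neighbours, and all of them may lie in $U$. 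But if $L$ spans no edges, then every $5$-triangle of $M[V(T)\cup L]$ uses at most one vertex of $L$ (with two $L$-vertices it would need the underlying $C_3$, hence an edge inside $L$), so it uses at least two vertices of $T$; two disjoint such triangles would need four vertices of $T$. Thus the swap you aim for is impossible at every tile no matter how large $\|L,T\|$ is --- even $\|L,T\|=6\ell$ does not help --- so no refinement of the averaging bound can close the argument. Your proposed fallbacks founder on the same obstruction: the leftover (heavy) edge produced by Proposition~\ref{prop:two_vertices_9} has one endpoint in $L$, so extending it with a third vertex of $L$ again requires an edge of $M[L]$, and concentrating $\|L,T\|$ on a triple of $L$ still yields at most one usable $L$-vertex per triangle.

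This is precisely the difficulty the paper's proof of Claim~\ref{clm:1} is designed to avoid. There, one adds a missing edge $e$ to the edge-maximal counterexample, takes the resulting factor of $M+e$, and deletes $e$: this produces a $6$-set $A$ which, unlike your $L$, is \emph{guaranteed} to contain two independent heavy edges while $M-A$ has a perfect $T_5$-factor on $k-2$ tiles. Those internal heavy edges, plus the secondary extremal choice of $A$ maximizing $\|z_1z_2\|$ (which upgrades $z_1z_2$ to a third heavy edge when needed), are exactly what allow $\|A,B\|\ge 25$ to be converted, via Propositions~\ref{prop:two_vertices_9} and \ref{prop:two_heavy_edges_9_7}, into two disjoint $5$-triangles inside $A\cup B$, giving the $k-1$ tiles. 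To rescue your route you would have to either import this maximality step or design exchanges involving several tiles of $\mathcal{T}$ simultaneously (since single-tile exchanges provably do not suffice), which is a substantially different argument from the one you outline.
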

\begin{proof} 
Let $e\in\overline{E}$. By the maximality of $M$, $M+e$ has a factor
$\mathcal{T}'$ consisting of 5-triangles and one $4$-triangle
$A_{1}$. If $e\in A_{1}$ then the $5$-triangles are contained in
$M$, and so we are done. Otherwise, $e\in E(A_{2}^{+})$ for some
$5$-triangle $A_{2}^{+}\in\mathcal{T}'$. Set $A_{2}:=A_{2}^{+}-e$,
and $A:=A_{1}\cup A_{2}$. Then $A$ satisfies: (i) $|A| = 6$, (ii) $M[A]$ contains two independent
heavy edges, and (iii)  $M[V \smallsetminus A]$ has a $T_5$-factor.
Over all vertex sets satisfying (i--iii), 
select $A$ and independent
heavy edges $e_1,e_2\in M[A]$ so that $\|z_1z_2\|$ is maximized,
where $\{z_1, z_2\} := A \smallsetminus (e_1 \cup e_2)$.   
Let $\mathcal{T}'$ be a $T_5$-factor of $M[V \smallsetminus A]$. Set $A_i:=e_i+z_i$, for $i\in[2]$.

If $M[A]$ contains a $5$-triangle we are done. Otherwise
$\left\Vert x,A_{2}\right\Vert \leq4$ for all $x\in V(A_{1})$, and
so $\left\Vert A\right\Vert =\left\Vert A_{1}\right\Vert +\left\Vert A_{2}\right\Vert +\left\Vert A_{1},A_{2}\right\Vert \leq20$.
Thus 
\[
\left\Vert A,V\smallsetminus A\right\Vert \geq6\left(\frac{4}{3}n-1\right)-40>24(k-2).
\]
So $\left\Vert A,B\right\Vert \geq25$ for some $B = b_1b_2b_3 \in\mathcal{T}'$.
It suffices to show that $M[A\cup B]$ contains  two independent $T_{5}$.

Suppose $\left\Vert \{z_{1},z_{2}\},B\right\Vert \geq9$. If there exists $h
\in [2]$ such that $\left\Vert z_{h},B\right\Vert =6$
then choose $i\in[3]$ with $\left\Vert b_{i},A\right\Vert \geq9$.
There exists $j\in[2]$ with $\left\Vert b_{i}e_{j}\right\Vert \geq5$;
also $\left\Vert z_{h}b_{i\oplus1}b_{i\oplus2}\right\Vert \geq5$.
So we are done. Otherwise, $\left\Vert z_{h},B\right\Vert \geq4$
and $\left\Vert z_{3-h},B\right\Vert \geq5$ for some $h\in[2]$.
By Proposition~\ref{prop:two_vertices_9}, $M[V(B)+z_{1}+z_{2}]$ has a factor consisting of
a heavy edge and a $T_{5}$, implying, by the maximality
of $\|z_1z_2\|$, that $\left\Vert z_{1}z_{2}\right\Vert =2$.
Set $e_{3}:=z_{1}z_{2}$. 
Choose distinct $i,j \in [3]$ so that 
$\|e_i, B\| \ge 9$ and $\|e_{j}, B\| \ge 7$. 
By Proposition~\ref{prop:two_heavy_edges_9_7},
there are two $T_5$ in $M[e_i \cup e_{j} \cup V(B)]$. 
\end{proof}

By Claim \ref{clm:1}, $W: =V\smallsetminus U$ satisfies $|W|=3$. Choose
$\mathcal T$ with $\|W\|_H$ maximum.
  
  \begin{claim}\label{clm:6}
 $\|W\|\ge4$. 
  \end{claim}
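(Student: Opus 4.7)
The plan is to argue by contradiction; suppose $\|W\|\le 3$. Applying the degree hypothesis $\delta(M)\ge 4k-1$ to the three vertices of $W$ gives
\[
\|W,U\| \;=\; \sum_{w\in W}d(w)\,-\,2\|W\| \;\ge\; 3(4k-1)-6 \;=\; 12k-9 \;>\; 12(k-1),
\]
so averaging over the $k-1$ tiles of $\mathcal T$ will produce some $T=v_1v_2v_3\in\mathcal T$ with $\|W,T\|\ge 13$. I label $T$ so that $v_1v_2$ and $v_1v_3$ are its heavy edges, and order $W=\{w_1,w_2,w_3\}$ so that $a_1:=\|w_1,T\|\ge a_2:=\|w_2,T\|\ge a_3:=\|w_3,T\|$; since each $a_i\le 6$, pigeonhole forces $a_1\ge 5$ and $a_2\ge 4$.

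From this configuration I will derive a contradiction by one of two mechanisms. First, if $M[W\cup V(T)]$ contains two vertex-disjoint $5$-triangles $T',T''$, then $(\mathcal T\setminus\{T\})\cup\{T',T''\}$ is a $T_5$-tiling of $M$ of size $k$, contradicting Claim~\ref{clm:1}. Second, if instead there is a single $5$-triangle $T^*\subseteq M[W\cup V(T)]$ distinct from $T$ such that $W^*:=(W\cup V(T))\setminus V(T^*)$ satisfies $\|W^*\|_H>\|W\|_H$, then $(\mathcal T\setminus\{T\})\cup\{T^*\}$ is a maximum $T_5$-tiling whose uncovered triple has strictly more heavy edges than $W$, contradicting our choice of $\mathcal T$.

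To carry out either mechanism I intend to apply Proposition~\ref{prop:one_vertex_4}: since $a_1\ge 5$ it produces at least one $5$-triangle of the form $T^*=w_1v_iv_j$, and when $a_1=6$ every choice of $\{i,j\}\subseteq[3]$ works (giving a $5$- or $6$-triangle according as $v_iv_j$ is light or heavy). For each such $T^*$, the leftover triple $\{w_2,w_3,v_k\}$ is a $5$-triangle exactly when $\|w_2w_3\|+\|w_2v_k\|+\|w_3v_k\|\ge 5$ with all three quantities positive, and $a_2+a_3\ge 7$ together with the freedom to choose $k\in[3]$ will often make this achievable, yielding the first mechanism. Analogous swaps based on $T^*=w_2v_iv_j$ (available because $a_2\ge 4$) will provide additional options.

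The main obstacle will be the boundary regime in which $\{w_2,w_3,v_k\}$ fails to be a $5$-triangle for every $k\in[3]$; there the first mechanism is unavailable and I must invoke the second. In those tight configurations the edges from $W$ to $T$ are forced into a very restricted pattern, and among the admissible swaps $T^*$ I will locate one that moves a heavy edge into $W^*$. Verifying $\|W^*\|_H>\|W\|_H$ requires a case analysis tracking (i) the distribution of heavy versus light edges from $W$ to $T$, (ii) the placement of $v_1$ as the common endpoint of the two heavy edges of $T$, and (iii) the assumption $\|W\|\le 3$, which together should force either the desired strict inequality or an alternative pair of disjoint $5$-triangles in $W\cup V(T)$.
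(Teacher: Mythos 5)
There is a genuine gap: your whole argument is confined to the single tile $T$ found by the unweighted count $\|W,U\|>12(k-1)$, and a tile with $\|W,T\|\ge 13$ need not admit any of your contradictions. Consider $W=\{x,y,z\}$ with $xy$ heavy and $z$ isolated in $W$ (so $\|W\|=2$, $\|W\|_H=1$), and $T=v_1v_2v_3$ with $v_1v_2,v_1v_3$ heavy and $v_2v_3$ light, where $x$ and $y$ are heavily joined to all of $T$ but $z$ sends only one light edge to $v_1$ (its remaining degree goes to other tiles, so $\delta(M)\ge 4k-1$ is not violated). Then $\|W,T\|=6+6+1=13$, yet no triangle of $M[W\cup V(T)]$ contains $z$, so there are neither two disjoint $5$-triangles nor a $T_5$-plus-$T_4$ factor of $M[W\cup V(T)]$; and every $5$-triangle $T^*\subseteq M[W\cup V(T)]$ avoids $z$, so the leftover triple contains $z$ and has at most one heavy edge, i.e.\ $\|W^*\|_H=1=\|W\|_H$. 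Thus neither of your two mechanisms (nor the counterexample mechanism) can fire, and no amount of case analysis inside $W\cup V(T)$ will rescue the plan; the ``boundary regime'' you defer is exactly this situation.

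The paper's proof is two-staged precisely to get around this. From a tile with $\|W,T\|\ge 13$ it extracts, via Proposition~\ref{prop:two_vertices_9}, only the weaker conclusion that some exchange leaves a heavy edge uncovered, whence (using the choice of $\mathcal T$ maximizing $\|W\|_H$ and the assumption $\|W\|\le 3$) $\|W\|_H=1$ with a specific labeling $W=\{x,y,z\}$, $xy$ heavy. It then runs a \emph{second}, weighted degree count, $2\|z,U\|+\|xy,U\|\ge 4\bigl(\tfrac{4n}{3}-1\bigr)-7>16(k-1)$, to locate a possibly different tile with $2\|z,T\|+\|xy,T\|\ge 17$; the weight $2$ on $z$ forces $\|z,T\|\ge 3$ there (note that in the configuration above $2\|z,T\|+\|xy,T\|=14$, so that tile is correctly excluded), and a short case analysis on $\|z,T\|\in\{3,4,5,6\}$ then produces either a better leftover or a forbidden factor. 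Your proposal is missing this second, reweighted selection of the tile, and without it the argument cannot be completed.
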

  \begin{proof}
   Suppose not. Then $\|W,U\|\ge 3(\frac{4}{3}n-1)-3>12(k-1)$. So $\|W,T\|\ge13$ for some $T\in\mathcal T$.
    Thus there exist $w,w'\in W$ with $\|w,T\|\ge4$ and $\|w',T\|\ge5$. By
    Proposition~\ref{prop:two_vertices_9}, $M[V(T)\cup\{w,w'\}]$ has a factor containing a $T_5$ and a heavy edge. By the choice of $W$ this implies $\|W\|_H=1$. Set $W=:\{x,y,z\}$ where $xy$ is heavy. 
   Since
    \begin{equation*}
      2 \|z, U\| + \|xy, U\| \ge 4\left(\frac{4n}{3} - 1\right) - 2 - 5
      > 16 \left(\frac{n}{3} - 1 \right) 
      = 16(k-1),
    \end{equation*}
    some $T=v_1v_2v_3 \in \mathcal{T}$
    satisfies $2 \|z, T\| + \|xy, T\| \ge 17$. Suppose $v_1v_2,v_1v_3\in E_H$. To contradict the maximality of $\|W\|$, it suffices to find $i\in[3]$ so that $\{M[\{x,y,v_i\}],M[\{z,v_{i\oplus1},v_{i\oplus2}\}]\}$ contains a $T_5$, and a graph with at least four edges.

    If $\|z, T\| = 3$ then $\|xy, T\| \ge 11$. Choose $i\in\{2,3\}$ so that $\|z,v_1v_{i}\|\le1$.

    If $\|z, T\| = 4$ then $\|xy, T\| \ge 9$.
    Choose $i\in\{2,3\}$ so that $xyv_i$ is a $5$-triangle.

    If $\|z, T\| = 5$ then $\|xy, T\| \ge 7$.
    Choose $i \in \{2,3\}$ so that $\|xy, v_i\| \ge 2$.

    Otherwise, $\|z, T\| = 6$ and $\|xy, T\| \ge 5$. Choose $i \in [3]$ so that $\|xy, v_i\| \ge 2$.
  \end{proof}
 
Since $M$ is a counterexample and $\|W\|\ge4$, we have $M[W]=:xyz$ is a  path in $M_H$. 
  \begin{claim}\label{clm:7}
    There exists $A \in \mathcal{T}$ and a labeling 
    $\{a_1, a_2, a_3\}$ of $V(A)$ such that
    \begin{enumerate} [label=(\alph*),nolistsep, ref={\theclaim~(\alph*)}]
      \item
	\label{clm:7a}
	$x$ is adjacent to $a_1$;
      \item
	\label{clm:7b}
	one of $xa_2a_3$ and $za_2a_3$ is a $5$-triangle
	and the other is at least a $4$-triangle; 
      \item
	\label{clm:7c}
	if $xa_1$ is light then 
	both $xa_2a_3$ and $za_2a_3$ are $5$-triangles; and
      \item
	\label{clm:7d}
	$\|y, A\| = 0$.
    \end{enumerate}
  \end{claim}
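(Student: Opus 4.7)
My plan is to prove the claim by contradiction, combining a weighted averaging argument with an extremal swap step exploiting the maximality of $\mathcal T$ (as a $T_5$-tiling) and of $\|W\|_H=2$. Every $T\in\mathcal T$ is a 5-triangle, so $T=v_1v_2v_3$ has exactly two heavy edges sharing a common center vertex $v_1$ and one light edge $v_2v_3$. Since $xz\notin E$ and $xy,yz$ are heavy, $\|x,W\|=\|z,W\|=2$ and $\|y,W\|=4$; combined with $d(x),d(y),d(z)\ge (4n-3)/3$ this yields the global sum estimates
\[
\sum_{T\in\mathcal T}\|x,T\|,\ \sum_{T\in\mathcal T}\|z,T\|\ge \tfrac{4n-9}{3},\qquad \sum_{T\in\mathcal T}\|y,T\|\ge\tfrac{4n-15}{3},
\]
together with the $y$-deficiency bound $\sum_{T\in\mathcal T}(6-\|y,T\|)\le (2n-3)/3$.

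The main obstacle is condition (d) $\|y,A\|=0$: the deficiency bound is too weak to force a $y$-isolated triangle by pigeonhole alone. My strategy for (d) is to suppose no $A\in\mathcal T$ with $\|y,A\|=0$ works, so that every $T\in\mathcal T$ with $\|y,T\|\ge 1$ admits some $v=v(T)\in V(T)$ with $\|y,v\|\ge 1$; then I would swap $y$ into $T$, forming $A':=T-v+y$ as a new 5-triangle using the heavy edges $xy,yz$ and the adjacency of $y$ to $V(T)\setminus\{v\}$, and deposit $v$ into $W':=\{x,v,z\}$. Propositions~\ref{prop:two_vertices_9} and~\ref{prop:heavy_path_5_4_triangles} supply the replacement $T_5$'s; the resulting tiling $\mathcal T'$ either yields a $T_4+(k-1)T_5$-factor of $M$ (contradicting that $M$ is a counterexample) or satisfies $\|W'\|_H>\|W\|_H$ (contradicting the maximality of $\|W\|_H$).

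Once an $A$ with $\|y,A\|=0$ is identified, averaging $\sum_T\|xz,T\|$ over the restricted set of such $A$ forces $\|xz,A\|\ge 9$ for some candidate. Writing $A=v_1v_2v_3$ with $v_1v_2,v_1v_3$ heavy, the natural labeling picks $a_1\in\{v_2,v_3\}$ with $\|x,a_1\|\ge 1$; then $\{a_2,a_3\}$ is a heavy edge of $A$ containing $v_1$, so $2\mu(a_2a_3)=4$ and $\|xz,\{a_2,a_3\}\|\ge\|xz,A\|-\|xz,a_1\|\ge 5$, delivering (b) after checking that both $xa_2a_3,za_2a_3$ are triangles (which reduces to $\|x,v_1\|,\|z,v_1\|\ge 1$). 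Condition (c) follows when $xa_1$ is light: then $\|xz,a_1\|\le 3$ so $\|xz,\{a_2,a_3\}\|\ge 6$ forces both triangles to be 5-triangles. The corner case $\|x,v_1\|=0$, in which $x$'s edges to $A$ lie only on $\{v_2,v_3\}$, would be resolved either by ruling it out using the heavy-edge structure of $A$ together with $\|xz,A\|\ge 9$, or by replacing $A$ with $\{x,v_2,v_3\}$ in the tiling (a valid 5-triangle when $xv_2,xv_3$ are heavy and $v_2v_3$ is light) and rerunning the argument on the modified $W'=\{v_1,y,z\}$, again invoking $\|W\|_H$-maximality.
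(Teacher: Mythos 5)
Your proposal does not go through as written, and the trouble is concentrated in part (d), which you yourself flag as the main obstacle. The paper's route is the reverse of yours and makes (d) free of charge: since $\|xz,U\|\ge 2\left(\tfrac{4n}{3}-1\right)-\|xz,W\|>8(k-1)$, averaging over \emph{all} of $\mathcal{T}$ produces $A$ with $\|xz,A\|\ge 9$; then, if $\|y,A\|\ge 1$, Proposition~\ref{prop:heavy_path_5_4_triangles} (with the heavy path $xyz$) factors $M[W\cup V(A)]$ into a $5$- and a $4$-triangle, which together with $\mathcal{T}-A$ is exactly the forbidden factor of $M$ — so $\|y,A\|=0$ with no further work. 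Your order of quantifiers is reversed: you first try to manufacture a triangle with $\|y,A\|=0$ and only then average over the set of such triangles. Both halves of that plan have gaps. The swap step is unsubstantiated: knowing only $\|y,v\|\ge 1$ for some $v\in V(T)$ does not let you form $T-v+y$ as a $5$-triangle (you would need $\|y,T-v\|\ge 3$ against the remaining heavy edge, or $\ge 4$ against the light one), and the heavy edges $xy,yz$ are irrelevant to a triangle inside $T$ since $x,z\notin V(T)$; moreover a swap that merely fails to increase $\|W\|_H$ yields no contradiction. And even granting a nonempty family of $y$-isolated triangles, averaging $\|xz,\cdot\|$ restricted to that (possibly very small) family gives no lower bound forcing $\|xz,A\|\ge 9$ on one of its members — the degree bound on $x,z$ only controls the sum over all of $U$.

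There is also a flaw in your derivation of (a)--(c). You fix $a_2a_3$ to be a heavy edge of $A$ through its centre and take $a_1$ among the light-edge endpoints; from $\|xz,\{a_2,a_3\}\|\ge 5$ you conclude (b). But the split could be $(4,1)$, or $(3,2)$ with the $2$ concentrated on a single vertex, in which case one of $xa_2a_3$, $za_2a_3$ is not even a triangle; your reduction "both are triangles iff $\|x,v_1\|,\|z,v_1\|\ge 1$" ignores adjacency to the other endpoint. The paper sidesteps this by letting $z$ (with, say, $\|x,A\|\ge\|z,A\|$, hence $\|z,A\|\ge 3$ and $\|x,A\|\ge 5$) choose the edge: Proposition~\ref{prop:one_vertex_4} gives an edge $a_2a_3\in E(A)$ with $za_2a_3$ a $4$- or $5$-triangle, and then $\|x,A\|\ge 5$ forces $x$ to be adjacent to all of $A$, so $xa_2a_3$ is at least a $4$-triangle, $xa_1\in E$, and $xa_1$ light forces $\|x,a_2a_3\|\ge 4$, giving (c); when $\|z,A\|=3$ one instead has $\|x,A\|=6$, making $xa_2a_3$ a $5$-triangle and $xa_1$ heavy. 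I recommend restructuring your argument along these lines: choose $A$ by global averaging first, read off (d) from Proposition~\ref{prop:heavy_path_5_4_triangles}, and let Proposition~\ref{prop:one_vertex_4} applied to the weaker of $x,z$ dictate the labeling.
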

  \begin{proof}
    There 
    exists $A=a_1a_2a_3 \in \mathcal{T}$ such that $\|xz, A\| \ge 9$, since
    $$\|xz, U\| \ge2\left(\frac{4n}{3} - 1\right)- \|xz, W\|\ge \frac{8n}{3} -
    2-4 > 8\left(\frac{n}{3} - 1 \right) = 8(k-1).$$
    Say $\|x, A\| \ge \|z, A\|$. Since $M$ is a
    counterexample, Proposition
    \ref{prop:heavy_path_5_4_triangles}  implies (d) $\|y, A\| = 0$.

    If $\|z, A\| = 3$ then, by Proposition~\ref{prop:one_vertex_4}, $za_2a_3$ is a $4$-triangle for some
     $a_2,a_3 \in A$.
    In this case $\|x, A\| = 6$, so
    $xa_2a_3$ is a $5$-triangle and $xa_1$ is a heavy edge. So (a--c) hold.
    
    If $\|z, A\| \ge 4$ then, by Proposition~\ref{prop:one_vertex_4}, 
    $za_2a_3$ is a $5$-triangle for some $a_2a_3 \in A$.
    In this case $\|x, A\| \ge 5$, so 
    $xa_2a_3$ is a $4$-triangle and $x$ is adjacent to $a_1$.
    Furthermore, if $xa_1$ is light then $xa_2a_3$ is a $5$-triangle. 
    Again (a--c) hold.
  \end{proof}
  
  \begin{claim}\label{clm:8}
    There exists $B \in \mathcal{T} - A$ such that
    $2\|a_1y, B\| + \|xz, B\| \ge 25$.
  \end{claim}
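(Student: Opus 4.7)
My plan is to prove Claim~\ref{clm:8} by a direct averaging argument over the $k-2$ tiles in $\mathcal{T}-A$. With
\[
\Sigma := \sum_{B\in\mathcal{T}-A}\bigl(2\|a_1y,B\|+\|xz,B\|\bigr),
\]
it will suffice to show $\Sigma > 24(k-2) = 8n-48$, since then some summand must exceed $24$ and hence be at least $25$.

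The first ingredient I will use is that $\mu(x,z)=0$. This follows because $M[W]$ is the path $xyz$ in $M_H$ with heavy edges $xy,yz$: if any additional edge $xz$ (light or heavy) existed, then $M[W]$ would contain a $5$-triangle that could be added to $\mathcal{T}$, contradicting maximality. Consequently $\|x,W\|=\|z,W\|=2$ and $\|y,W\|=4$.

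Next I expand, using $\bigcup(\mathcal{T}-A)=V\smallsetminus(W\cup V(A))$:
\[
\Sigma = 2\bigl(d(a_1)+d(y)\bigr)+d(x)+d(z) - \bigl(2\|a_1y,W\|+\|xz,W\|\bigr) - \bigl(2\|a_1y,A\|+\|xz,A\|\bigr).
\]
The minimum-degree hypothesis bounds the first four terms below by $6\cdot\tfrac{4n-3}{3}=8n-6$. Using $\|a_1,y\|=0$ from \ref{clm:7d} and the values recorded above, the $W$-subtraction equals $2\|a_1,xz\|+12$. Since $A$ is a $5$-triangle with heavy edges $a_1a_2$ and $a_1a_3$, one has $\|a_1,A\|=4$, and combined with $\|y,A\|=0$ the $A$-subtraction is $8+\|xz,A\|$. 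Rewriting $\|xz,A\|=\|a_1,xz\|+\|xz,\{a_2,a_3\}\|$ yields
\[
\Sigma \ge 8n - 26 - \bigl(3\|a_1,xz\|+\|xz,\{a_2,a_3\}\|\bigr).
\]

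The last step, which is the only one to actually invoke items (a)--(c) of Claim~\ref{clm:7}, is to bound the bracket by $20$. If $xa_1$ is light, then \ref{clm:7a} gives $\|a_1,x\|=1$, so $\|a_1,xz\|\le 3$, and \ref{clm:7c} forces $\|x,\{a_2,a_3\}\|=\|z,\{a_2,a_3\}\|=4$; thus the bracket is at most $9+8=17$. If $xa_1$ is heavy, the trivial bounds $\|a_1,xz\|\le 4$ and $\|xz,\{a_2,a_3\}\|\le 8$ give the bracket at most $12+8=20$. In either case $\Sigma\ge 8n - 46 > 8n - 48$, so some $B\in\mathcal{T}-A$ satisfies $2\|a_1y,B\|+\|xz,B\|\ge 25$. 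The main obstacle I foresee is the thin two-unit margin: the averaging is tight and relies on both $\mu(x,z)=0$ and on the stronger consequence of \ref{clm:7c} in the light subcase.
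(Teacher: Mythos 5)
Your proof is correct and takes essentially the same approach as the paper: sum the quantity $2\|a_1y,B\|+\|xz,B\|$ over $B\in\mathcal T-A$, bound the sum from below using $\delta(M)\ge\frac{4n-3}{3}$ and the structural facts $\mu(x,z)=0$, $\|y,A\|=0$, and then average. The paper's bookkeeping is slightly different: it writes
\[
2\|a_1y,U'\|+\|xz,U'\|\ge 6\left(\tfrac{4}{3}n-1\right)-2\|a_1y,W\cup V(A)\|-\|xz,W\cup V(A)\|\ge 8n-6-24-16=8n-46,
\]
using the crude bounds $\|a_1y,W\cup V(A)\|\le 12$ and $\|xz,W\cup V(A)\|\le 16$, whereas you unfold the subtraction into $W$- and $A$-parts separately; both land on $8n-46>24(k-2)$.

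Two small imprecisions in your write-up are worth flagging, though neither affects correctness. First, you assert ``$A$ is a $5$-triangle with heavy edges $a_1a_2$ and $a_1a_3$, so $\|a_1,A\|=4$.'' The labeling produced by Claim~7 does \emph{not} guarantee that both $a_1a_2$ and $a_1a_3$ are heavy (in one subcase of Proposition~\ref{prop:one_vertex_4}, $a_2a_3$ is the heavy edge and $a_1a_3$ is light, giving $\|a_1,A\|=3$). What you actually use, and what is always true, is the upper bound $\|a_1,A\|\le 4$, which suffices since you are subtracting. Second, in the light subcase you claim $\|x,\{a_2,a_3\}\|=\|z,\{a_2,a_3\}\|=4$ via Claim~\ref{clm:7c}; that equality holds only when $a_2a_3$ is the light edge of $A$, which again is not guaranteed (if $\mu(a_2,a_3)=2$ it would be $3$). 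Fortunately, the case split is unnecessary: the trivial bound $3\|a_1,xz\|+\|xz,\{a_2,a_3\}\|\le 12+8=20$ already yields $\Sigma\ge 8n-46$ in all cases, which is exactly what the paper uses.
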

  \begin{proof} Set $U':=U \smallsetminus V(A)$. 
    Since $xz\notin E$ and $\|y,A\|=0$,

    \begin{align*}
      2\|a_1y, U'\| + \|xz, U'\| &\ge
      6\left(\frac{4}{3}n - 1\right)-
      2\|a_1y, W \cup V(A)\| - \|xz, W \cup A\|\\ 
      &\ge \frac{24n}{3}-6-2(8+4)-(8+8) 
      > 24 \left(\frac{n}{3} - 2\right) = 24(k-2).
    \end{align*}
   So there exists $B \in \mathcal{T} - A$ with $2\|a_1y, B\| + \|xz, B\| \ge 25$.
  \end{proof}

  Let $W' := W \cup \{a_1\} \cup V(B)$.
  For any edge $e \in \{a_1x, xy, yz\}$ define
  $$Q(e) := \{u \in V(B): \|e, u\| \ge 3\}$$ and
  for any vertex $v \in \{a_1, x, y, z\}$ and $k \in \{4, 5\}$ define
  \begin{equation*}
    P_k(v) := \{u \in B: 
      T_k \subseteq M[B - u + v] \}. 
  \end{equation*}
  \begin{claim}\label{clm:9}
  If $v\notin e$  
  and there exists $u\in P_k(v) \cap Q(e)$ then $M[\left(V(B) \cup e\right) + v]$
  can be factored into a $(3+\|e\|)$-triangle and a $k$-triangle.
  Moreover:
  \begin{equation}\label{PQ}
    \begin{split}
    \text{(a) }&|Q(e)| \ge  \frac{\|e, B\| - 6}{2} ~~ 
    \text{(b) }|P_5(v)| \ge \|v, B\| - 3~~ \\
    \text{(c) }&|P_4(v)| = 3 \text{ if $\|v, B\| \ge 5$ and }
               |P_4(v)| \ge (\|v,B\| - 2)  \text{ otherwise.}
    \end{split}
  \end{equation}
  \end{claim}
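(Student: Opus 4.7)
The plan is to dispatch the main factorization by direct construction, and then verify each of (a), (b), (c) by counting or a short case analysis.

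For the main implication, suppose $u \in P_k(v) \cap Q(e)$ and $v \notin e$. Since $e \cap V(B) = \emptyset$ and $v \notin V(B) \cup e$, the set $V(B) \cup e \cup \{v\}$ has six vertices and decomposes as the disjoint union of $e \cup \{u\}$ and $(V(B) \smallsetminus \{u\}) \cup \{v\}$. The induced triangle on $e \cup \{u\}$ contains the $\|e\|$ edges of $e$ plus the $\|e,u\| \ge 3$ edges joining $u$ to the endpoints of $e$, so $M[e \cup \{u\}]$ contains a $(3+\|e\|)$-triangle as a subgraph. The induced subgraph on $(V(B) \smallsetminus \{u\}) \cup \{v\}$ contains $T_k$ by the definition of $P_k(v)$. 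These two triangles partition the vertex set, giving the claimed factor.

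For (a), I classify each $u \in V(B)$ by whether $u \in Q(e)$: if $u \notin Q(e)$ then $\|e, u\| \le 2$, and if $u \in Q(e)$ then $\|e, u\| \le 4$ (since $u$ contributes at most two edges to each endpoint of $e$). Summing over $V(B)$, whose size is $3$, gives $\|e, B\| \le 2(3 - |Q(e)|) + 4|Q(e)| = 6 + 2|Q(e)|$, and (a) follows.

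For (b) and (c), fix notation so that $v_1 \in V(B)$ is the common endpoint of the two heavy edges of $B$, that is, $v_1v_2, v_1v_3 \in E_H$ and $v_2v_3 \in E_G \smallsetminus E_H$. Set $d_i := \mu(v, v_i)$, so $\|v,B\| = d_1 + d_2 + d_3$. For each $i \in [3]$, the multigraph $M[(V(B) \smallsetminus \{v_i\}) \cup \{v\}]$ is a triangle whose edge count equals $\mu(v_jv_k) + d_j + d_k$, where $\{j,k\} = [3] \smallsetminus \{i\}$, and which contains $C_3$ precisely when $d_j \ge 1$ and $d_k \ge 1$. Translating \emph{contains $T_5$} and \emph{contains $T_4$} into inequalities on the triple $(d_1, d_2, d_3)$, and enumerating over $\{0,1,2\}^3$ using only the $v_2 \leftrightarrow v_3$ symmetry of $B$, yields (b) and the two statements in (c).

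The main obstacle is the bookkeeping in the enumeration for (b) and (c); since there are at most eighteen essentially distinct triples and the conditions for membership in $P_5(v)$ and $P_4(v)$ are easy to read off, this is routine rather than conceptually difficult.
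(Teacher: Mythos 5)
Your construction for the main factorization is correct, as is the double-counting for (a); both match the paper's own (one-line) proof. Reducing (b) and (c) to an enumeration over the triples $(d_1,d_2,d_3)\in\{0,1,2\}^3$ where $d_i:=\mu(v,v_i)$ and $v_1$ is the apex of the two heavy edges of $B$ is also the right framework, and (b) does check out under that enumeration.

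However, you assert without carrying out the enumeration that it ``yields \dots the two statements in (c),'' and it does not: the second clause of (c) is false as stated. Take $(d_1,d_2,d_3)=(0,2,2)$, so $\|v,B\|=4$. Then $M[\{v_2,v_3,v\}]$ is a $5$-triangle, so $v_1\in P_4(v)$; but $M[\{v_1,v_3,v\}]$ and $M[\{v_1,v_2,v\}]$ contain no $C_3$ at all since $vv_1\notin E$, hence $|P_4(v)|=1<2=\|v,B\|-2$. The triples $(2,0,2)$ and $(2,2,0)$ fail identically. (The first clause of (c)---that $\|v,B\|\ge5$ forces $|P_4(v)|=3$---does hold, and inspecting Claims~\ref{clm:10}--\ref{clm:12} shows that only this first clause is ever invoked, so the slip is harmless downstream in the paper.) Since the second clause of (c) as written is false, ``routine enumeration'' cannot confirm it; a correct write-up must either record the failure at $\|v,B\|=4$ when $v$ has a non-neighbor in $B$, or prove (c) under the extra hypothesis $V(B)\subseteq N(v)$, under which the remaining $\|v,B\|=4$ triples, those of type $\{1,1,2\}$, all give $|P_4(v)|\ge2$.
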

  \begin{proof}
For the first sentence apply definitions; for \eqref{PQ} check each  argument value.
\end{proof}
To obtain a contradiction, it suffices to
  find two independent triangles $C,D\subseteq M[W'-w]$ for some $w \in \{a_1, x, y\}$
so that $\{C,D,wa_2a_3\}$ 
  is a factor of $M[W \cup V(A) \cup V(B)]$
  consisting of two  $5$-triangles and
  one  $4$-triangle. We further refine this notation by setting
  $D:=vb_{i\oplus1}b_{i\oplus2}$ and $C:=b_ie$, where 
  $v \in \{a_1, x, y, z\}$,
  $b_i\in B$ and $e\in
  E(\{a_1, x, y, z\} - v)$. Then $w$ is defined by 
  $w\in \left(\{a_1,x,y,z\}\smallsetminus e\right)-v$; set $W^*:=wa_2a_3$. 

\begin{claim}\label{clm:10}
None of the following statements is true:
\begin{enumerate} [label=(\alph*),nolistsep, ref={\theclaim~(\alph*)}]
    \item
      \label{clm:10a}
      $P_4(v) \cap Q(e) \neq \emptyset$
      for some  $e \in \{xy, yz\}$ and
      $v \in \{x, z\}\smallsetminus e$. 
    \item
      \label{clm:10b}
      $P_5(v) \cap Q(e) \neq \emptyset$ for some 
      $e \in \{a_1x, xy, yz\}$ and $v \in \{a_1,x, y, z\}\smallsetminus e$ 
      such that $y \in e +v$.
    \item
      \label{clm:10c}
      $P_4(a_1) \cap Q(xy) \neq \emptyset$ and 
      $P_4(a_1) \cap Q(yz) \neq \emptyset$.
    \item
      \label{clm:10d}
   There exists $b_i\in P_5(a_1)$ such that $x,y,z\in N(b_i)$.
  \end{enumerate}
  \end{claim}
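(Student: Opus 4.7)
The plan is to assume, for contradiction, that one of (a)--(d) holds, and in each case to exhibit triangles $C := b_i e$ and $D := v b_{i \oplus 1} b_{i \oplus 2}$ (with $w$ the unique element of $\{a_1, x, y, z\} \smallsetminus (e \cup \{v\})$) so that $\{C, D, wa_2 a_3\}$ factors $M[W \cup V(A) \cup V(B)]$ into two $5$-triangles and one $4$-triangle. Appending such a factor to $\mathcal{T} \smallsetminus \{A, B\}$ would produce the tiling forbidden by the assumption that $M$ is a counterexample. The first sentence of Claim~\ref{clm:9} does the routine work throughout: $b_i \in Q(e)$ guarantees that $C$ is a $5$-triangle when $e$ is heavy, and $b_i \in P_k(v)$ guarantees that $D$ is at least a $k$-triangle.

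For (a), the only admissible pairs $(e, v)$ are $(xy, z)$ and $(yz, x)$, both forcing $w = a_1$, so $W^* = A$ is automatically a $5$-triangle; then any $b_i \in P_4(v) \cap Q(e)$ makes $C$ a $5$-triangle (since $e$ is heavy) and $D$ at least a $4$-triangle. For (b), the side condition $y \in e \cup \{v\}$ with $b_i \in P_5(v)$ makes $D$ a $5$-triangle, and a short check of the five allowed sub-cases $(e, v)$ shows that $C$ and $W^*$ supply the remaining $T_5$ and $T_4$: whenever $e$ is one of the heavy edges $xy$ or $yz$, $C$ is already a $5$-triangle and $W^*$ only needs the $4$-triangle bound from Claim~\ref{clm:7b}; and in the single sub-case where $e = a_1 x$ may be light, we have $v = y$ and $w = z$, so Claim~\ref{clm:7c} upgrades $W^* = z a_2 a_3$ to a $5$-triangle.

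Case (c) is the most delicate, because $b_i \in P_4(a_1) \cap Q(e)$ only forces $D$ to be at least a $4$-triangle, so $W^*$ is now required to be a $5$-triangle. If $a_1 x$ is light, Claim~\ref{clm:7c} makes both $x a_2 a_3$ and $z a_2 a_3$ into $5$-triangles and either of the two hypothesized intersections suffices. If $a_1 x$ is heavy, Claim~\ref{clm:7b} designates exactly one of $x a_2 a_3, z a_2 a_3$ as the $5$-triangle, and we select whichever of $P_4(a_1) \cap Q(xy)$ and $P_4(a_1) \cap Q(yz)$ makes $w$ land on that $5$-triangle side. Case (d) follows the same pattern: the witness $b_i$ adjacent to all of $x, y, z$ makes both $b_i xy$ and $b_i yz$ into at least $4$-triangles (using that $xy$ and $yz$ are heavy), $D = a_1 b_{i \oplus 1} b_{i \oplus 2}$ is already a $5$-triangle by $b_i \in P_5(a_1)$, and we again pick $w \in \{x, z\}$ to align $W^*$ with the $5$-triangle guaranteed by Claim~\ref{clm:7}.

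The main obstacle is not any single computation but the bookkeeping required to match $(v, e)$, and hence $w$, to whichever of $x a_2 a_3, z a_2 a_3$ is the $5$-triangle promised by Claim~\ref{clm:7b}. A clean way to organize the write-up is to first dispose of the sub-cases where $w = a_1$ (so that $W^* = A$ is automatically $T_5$ and only the sizes of $C$ and $D$ must be tracked), and then handle the remaining sub-cases by letting the status of $a_1 x$ together with Claim~\ref{clm:7b} dictate the choice of $w \in \{x, z\}$.
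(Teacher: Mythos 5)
Your proof follows the same structure as the paper's: assume one of (a)--(d), build the factor $\{C, D, W^*\}$ with $C = b_ie$, $D = vb_{i\oplus 1}b_{i\oplus 2}$, $W^* = wa_2a_3$, and verify it yields two $5$-triangles and one $4$-triangle, contradicting that $M$ is a counterexample. Parts (a), (c), (d) are handled correctly and match the paper's argument, including the key move in (c) and (d) of choosing $w\in\{x,z\}$ so that $W^*$ is the $5$-triangle promised by Claim~\ref{clm:7b}. There is a small imprecision in your treatment of (b): you split the five sub-cases according to whether $e\in\{xy,yz\}$ or $e=a_1x$, and in the $e=a_1x$ branch you invoke Claim~\ref{clm:7c} to upgrade $W^*$ to a $T_5$; but Claim~\ref{clm:7c} applies only when $a_1x$ is light. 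When $a_1x$ is heavy that appeal is unavailable, though the sub-case is then easier, since $\|e\|=2$ already makes $C$ a $T_5$ and $W^*$ need only be a $T_4$ by Claim~\ref{clm:7b}. The paper sidesteps this by branching on $\|e\|\in\{1,2\}$ rather than on the identity of $e$, which handles the heavy $a_1x$ possibility automatically; your write-up should make the same distinction explicit.
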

  \begin{proof}
  By Claim~\ref{clm:7}, each case implies $M$ is not a counterexample, a contradiction:
  
  (a) Then $w=a_1$, and so $\|W^*\|\ge5$, $\|C\|\ge5$ and $\|D\|\ge4$.
  
  (b) Then $\|D\|\ge5$. If $\|e\|=2$ then $\|C\|\ge5$ and $\|W^*\|\ge4$;
  otherwise $e = a_1x$ and, by Claim~\ref{clm:7c}, $\|W^*\|\ge5$ and $\|C\|\ge4$.
  
  (c) By Claim~\ref{clm:7b}
  , $\|wa_2a_3\|\ge5$ for some $w\in \{x,z\}$. Set $v:=a_1$ and $e:=\{x,y,z\}-w$. Then $\|W^*\|\ge5$, $\|C\|\ge5$ and $\|D\|\ge4$.
  
  (d) Set $v:=a_1$, choose $w\in \{x,z\}$ so that $\|W^*\|\ge5$, and set $e:=\{x,y,z\}-w$. Then $\|D\|\ge5$ and $\|C\|\ge4$.
 \end{proof}
   \begin{claim}
     \label{clm:11}
    $\|a_1, B\| < 5$. 
  \end{claim}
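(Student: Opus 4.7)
The plan is to argue by contradiction: assume $\|a_1,B\|\ge 5$ and derive $2\|a_1y,B\|+\|xz,B\|\le 24$, contradicting Claim~\ref{clm:8}.  The key observation is that since $y\in xy+a_1$ and $y\in yz+a_1$, instance (b) of Claim~\ref{clm:10} with $v=a_1$ forces
$$P_5(a_1)\cap Q(xy)=\emptyset=P_5(a_1)\cap Q(yz),$$
so every $b\in P_5(a_1)$ satisfies $\|xy,b\|\le 2$ and $\|yz,b\|\le 2$.  I will then use \eqref{PQ}(b,c) to control how much of $B$ lies in $P_5(a_1)$ and $P_4(a_1)$, and split on $\|a_1,B\|\in\{5,6\}$.

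First I would treat the easier case $\|a_1,B\|=6$.  Here \eqref{PQ}(b) gives $P_5(a_1)=B$, so by the displayed observation $\|xy,B\|=\|x,B\|+\|y,B\|\le 6$ and $\|yz,B\|=\|y,B\|+\|z,B\|\le 6$.  Adding these and doubling $\|a_1,B\|=6$ yields
$$2\|a_1y,B\|+\|xz,B\|=2\|a_1,B\|+\bigl(\|x,B\|+\|y,B\|\bigr)+\bigl(\|y,B\|+\|z,B\|\bigr)\le 12+6+6=24,$$
contradicting Claim~\ref{clm:8}.

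Next, for $\|a_1,B\|=5$, \eqref{PQ}(c) gives $P_4(a_1)=B$, so Claim~\ref{clm:10c} says $Q(xy)=\emptyset$ or $Q(yz)=\emptyset$; by symmetry of the target expression in $x$ and $z$ I may assume $Q(xy)=\emptyset$, giving $\|xy,B\|\le 6$, i.e.\ $\|x,B\|+\|y,B\|\le 6$.  Meanwhile \eqref{PQ}(b) gives $|P_5(a_1)|\ge 2$, and for each of these two vertices $b\in P_5(a_1)$ I have $\|yz,b\|\le 2$; the remaining vertex contributes at most $4$, so $\|yz,B\|\le 2+2+4=8$, i.e.\ $\|y,B\|+\|z,B\|\le 8$.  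Combining,
$$2\|a_1y,B\|+\|xz,B\|=10+\bigl(\|x,B\|+\|y,B\|\bigr)+\bigl(\|y,B\|+\|z,B\|\bigr)\le 10+6+8=24,$$
again contradicting Claim~\ref{clm:8}.

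The only subtlety I anticipate is checking that all invocations of Claim~\ref{clm:10} are legal, namely that the required parameters $v$ and $e$ satisfy the conditions stated there (in particular $v\notin e$ and, for (b), $y\in e+v$); these are trivial to verify for $v=a_1$, $e\in\{xy,yz\}$.  There is no heavy calculation beyond the two displayed arithmetic bounds, and both cases reduce to the same numerical upper bound $24$, which is exactly one short of the lower bound $25$ supplied by Claim~\ref{clm:8}.
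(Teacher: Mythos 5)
Your proof is correct, and the approach is essentially the paper's in contrapositive form. The paper starts from Claim~\ref{clm:8}'s lower bound $2\|a_1y,B\|+\|xz,B\|\ge 25$, uses \eqref{PQ}(a) to lower-bound $|Q(e)|$ for a well-chosen $e\in\{x'y,z'y\}$, and then shows $|Q(e)|+|P_k(a_1)|>3$, which by pigeonhole yields a nonempty intersection forbidden by Claim~\ref{clm:10}~(b) or (c). You instead go the other way: you take the emptiness of the intersections $P_5(a_1)\cap Q(xy)$, $P_5(a_1)\cap Q(yz)$, and at least one of $P_4(a_1)\cap Q(xy)$, $P_4(a_1)\cap Q(yz)$ (guaranteed by Claim~\ref{clm:10}~(b,c) once \eqref{PQ}(b,c) force $P_5(a_1)$ or $P_4(a_1)$ to contain much or all of $B$), translate emptiness of $Q(e)$ directly into $\|e,b\|\le 2$, and sum to get the upper bound $24$, contradicting Claim~\ref{clm:8}. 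Your route avoids needing \eqref{PQ}(a) altogether and sidesteps the paper's subcase split on which of $\|x'y,B\|\ge 9$ or $\|z'y,B\|\ge 7$ holds, at the small cost of the WLOG step (which is legitimate: after the Claim~\ref{clm:10}~(b,c) instances you invoke, everything remaining is symmetric under swapping $x$ and $z$). Both proofs ultimately rest on the same three claims applied to $a_1$, $y$, and the path edges, so this is a dual presentation of the same counting argument rather than a new idea.
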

  \begin{proof}
    Suppose not. 
     Let $\{x',z'\}=\{x,z\}$, where $\|x',B\|\ge\|z',B\|$. 
    For $k \in \{4, 5\}$, define 
    \begin{equation*} 
      s_k(e,v):=|Q(e)| + |P_k(v)|.
    \end{equation*} 
  Then $s_k(e,v)>3$ implies $Q(e)\cap P_k(v)\ne\emptyset$. We use Claim~\ref{clm:9} to calculate $s_k(e,v)$. Observe 
  $$25-2\|a_1,B\|\le \|x'y,B\|+\|yz',B\|,\text{ and so }\|x'y,B\|\ge13-\|a_1,B\|.$$
  
  If $\|a_1,B\|=6$ then  $s_5(x'y,a_1)\ge 1 + 3$, contradicting
  Claim~\ref{clm:10b}. Otherwise, $\|a_1,B\|=5$. Either $\|x'y,B\|\ge9$ or
  $\|z'y,B\|\ge7$. In the first case, $s_5(x'y,a_1)\ge2+2$, contradicting
  Claim~\ref{clm:10b}. In the second case, $s_4(x'y,a_1), s_4(z'y,a_1)>1+3$, contradicting Claim~\ref{clm:10c}. 
  \end{proof}

  \begin{claim}\label{clm:12}
    $\|\{a_1,y\}, B\| <9$.
  \end{claim}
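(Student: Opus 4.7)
The plan is to argue by contradiction. Suppose $\|\{a_1, y\}, B\| \ge 9$, and set $\alpha := \|a_1, B\|$, $\beta := \|y, B\|$, $\gamma := \|x, B\|$, $\delta := \|z, B\|$. By Claim~\ref{clm:11}, $\alpha \le 4$, hence $\beta \ge 5$; combined with Claim~\ref{clm:8} this gives $\gamma + \delta \ge 25 - 2(\alpha + \beta) \ge 5$.

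The engine of the argument is the observation that the sets $P_k(v)$ and $Q(e)$ all live inside $B$ with $|B| = 3$, so any forbidden intersection from Claim~\ref{clm:10} yields $|P| + |Q| \le 3$. Applying this to Claim~\ref{clm:10b} with $(v, e) \in \{(y, a_1 x), (a_1, xy), (a_1, yz)\}$ and plugging in the lower bounds from~\eqref{PQ} yields the three linear inequalities
\[
2\beta + \alpha + \gamma \le 18, \qquad 2\alpha + \beta + \gamma \le 18, \qquad 2\alpha + \beta + \delta \le 18.
\]
Claim~\ref{clm:10c} supplies the additional disjunction that at least one of $|P_4(a_1)| + |Q(xy)| \le 3$ or $|P_4(a_1)| + |Q(yz)| \le 3$ holds.

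I would then split into $\beta = 6$ (so $\alpha \in \{3, 4\}$) and $\beta = 5$ (so necessarily $\alpha = 4$ and $\gamma + \delta \ge 7$). In the $\beta = 6$ case, $|P_5(y)| = 3 = |B|$, so the first inequality above sharpens to $Q(a_1 x) = \emptyset$, which translates to the vertex-wise constraint $\|a_1, b_i\| + \|x, b_i\| \le 2$ for every $i$ and in particular $\gamma \le 6 - \alpha$. Moreover $\|y, b_i\| = 2$ for each $i$, so $Q(xy) = \{b_i : \|x, b_i\| \ge 1\}$ and $Q(yz) = \{b_i : \|z, b_i\| \ge 1\}$; in particular $|Q(yz)| \ge 2$ whenever $\delta \ge 3$. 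Feeding this into Claim~\ref{clm:10c} with $|P_4(a_1)| \ge \alpha - 2$ forces the $|Q(xy)|$-branch, whereupon $|Q(xy)| \le 3 - |P_4(a_1)|$ combined with $2\alpha + \beta + \delta \le 18$ eliminates all distributions of $(\|a_1, b_i\|, \|x, b_i\|, \|z, b_i\|)$ compatible with $\alpha + \beta \ge 9$ and $\gamma + \delta \ge 5$. In the $\beta = 5$ case, $|P_5(y)| \ge 2$ and $|P_5(a_1)| \ge 1$; the stronger lower bound $\gamma + \delta \ge 7$ makes Claim~\ref{clm:10d} (which forbids a vertex of $P_5(a_1)$ adjacent to all of $x, y, z$) the key constraint that closes the argument, after the three linear inequalities above have narrowed $(\alpha, \beta, \gamma, \delta)$ to a single pattern.

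The main obstacle, as in the proof of Claim~\ref{clm:11}, is that the bounds in~\eqref{PQ} are only lower bounds, so after the linear-programming step one must descend to the level of individual edge multiplicities $\|v, b_i\|$ to rule out the residual configurations. Concretely, this means splitting on whether $b_2 b_3$ is heavy or light and on the two or three admissible distributions of $\|a_1, \cdot\|$ across $V(B)$, then checking that in each case Claim~\ref{clm:10c} or Claim~\ref{clm:10d} is violated. I expect most of the write-up to be occupied by this short but finicky distribution analysis.
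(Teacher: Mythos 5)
Your setup matches the paper's almost exactly: under the assumption $\|\{a_1,y\},B\| \ge 9$, you invoke Claim~\ref{clm:11} and Claim~\ref{clm:8} to bound $\alpha,\beta,\gamma,\delta$, you correctly observe that each forbidden intersection in Claim~\ref{clm:10} produces an inequality $|P|+|Q|\le 3$, you extract the three linear inequalities from Claim~\ref{clm:10b} together with \eqref{PQ}, and your three residual cases ($\alpha=3,\beta=6$; $\alpha=4,\beta=6$; $\alpha=4,\beta=5$) are precisely the paper's Cases 2, 1 and 3. The derivation that $Q(a_1x)=\emptyset$ when $\beta=6$, and the vertex-wise constraint $\|a_1,b_i\|+\|x,b_i\|\le 2$, are also the paper's.

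The gap is in how you propose to close the residual cases. You claim that in the $\beta=6$ subcase, Claim~\ref{clm:10c} together with $|P_4(a_1)|\ge\alpha-2$ and $|Q(yz)|\ge2$ ``forces the $Q(xy)$-branch''; but when $\alpha=3$ one only gets $|P_4(a_1)|+|Q(yz)|\ge1+2=3=|B|$, which does \emph{not} force a nonempty intersection, so Claim~\ref{clm:10c} need not fire. The paper's Case~2 instead closes by exhibiting a vertex $b\in P_4(z)\cap Q(xy)$ and invoking Claim~\ref{clm:10a}---a part of Claim~\ref{clm:10} that does not appear in your list of the tools needed for the final check (``Claim~\ref{clm:10c} or Claim~\ref{clm:10d}''). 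Likewise, in the $\alpha=4,\beta=5$ subcase your three linear inequalities leave several $(\gamma,\delta)$ patterns alive (for instance $(4,3)$, $(3,4)$, $(4,4)$, $(4,5)$, $(2,5)$, $(3,5)$), not a single one, and the paper's Case~3 does not conclude with a Claim~\ref{clm:10d} contradiction: after pulling constraints (ii)--(vii) from Claims~\ref{clm:10a}--\ref{clm:10d} and Claim~\ref{clm:9}, it pins down $N(x)\cap B$, $N(z)\cap B$ and $N(a_1)\cap B$ exactly and then \emph{constructs} a factor $\{yzb_h,\, xb_ib_j,\, A\}$ with two $T_5$'s and a $T_4$ directly. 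So while your framing is the right one and matches the paper, the closing mechanism you sketch (10(c)/10(d) alone after LP) is not sufficient; Claim~\ref{clm:10a} is essential in two of the three subcases and the hardest subcase ends by building the tiling rather than by a forbidden-intersection contradiction.
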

  \begin{proof}
Suppose $\|\{a_1, y\},B\| \ge9$. We consider several cases. 


\emph{Case 1}: $\|a_1,B\|=4$ and $\|y,B\|=6$. By Proposition~\ref{prop:one_vertex_4} there are distinct  $b,b',b''\in V(B)$ with $b\in P_5(a_1)$ and $1\le \|a_1,b'\|\le\|a_1,b''\|=2$. Claim~\ref{clm:10b} implies $b\notin Q(xy)\cup Q(yz)$; so $x,z\notin N(b)$, since $\|b,y\|=2$. 
By Claim~\ref{clm:9}~(b),  
 $P_5(y)=B$; so $Q(a_1,x) 
 =\emptyset$ by Claim~\ref{clm:10b}. Thus $\|x,b'\|\le 2-\|a_1,b'\|$ and $\|x,b''\|=0$. 
By the case $\|\{x,z\},B\|\ge5$; thus $\|x,b'\|=1$ and $\|z,\{b',b''\}\|=4$; so $\|a_1,b'\|=1=\|a_1,b\|$. Thus $b'\in P_4(a_1)\cap Q(xy)\cap Q(yz)$, contradicting Claim~\ref{clm:10c}.

%

\emph{Case 2}: $\|a_1,B\|=3$ and $\|y,B\|=6$. Then $\|\{x,z\},B\|\ge7$. For $\{u,v\}=\{x,y\}$, we have
$\|u,B\|\ge 1$. So Claim~\ref{clm:9}~(a) implies $|Q(uy)|\ge1$. Thus Claim~\ref{clm:10a} implies $|P_4(v)|\le 2$. So Claim~\ref{clm:9} (c) implies $\|v,B\|\le4$. Thus $3\le\|x,B\|,\|z,B\|\le4$.

By Proposition~\ref{prop:one_vertex_4}, $b\in P_4(a_1)$ for some $b\in B$. So $b\notin Q(xy)\cap Q(yz)$ by Proposition~\ref{clm:10b}. Thus $b\notin N(x)\cap N(z)$. 
 Also $P_5(y)=B$ by $\|y,B\|=6$.  By Claim~\ref{clm:10b}, $Q(a_1x)=\emptyset$. Thus  $\|x,B-b\|\le2$. So $xb\in E$ and $\|z,B-b\|=\|z,B\|\ge3$. Thus $b\in P_4(z)\cap Q(xy)$, contradicting Claim~\ref{clm:10a}.


\emph{Case 3}:
$\left\Vert a_{1},B\right\Vert =4$ and $\left\Vert y,B\right\Vert =5$.
Then (i) $\left\Vert \{x,z\},B\right\Vert \geq7$; let $\{x',z'\}:=\{x,z\}$,
where $\left\Vert x',B\right\Vert \geq\left\Vert z',B\right\Vert $.
Claim~\ref{clm:9} (b) implies $|P_{5}(y)|\geq2$; Proposition~\ref{prop:one_vertex_4}
implies $b_{i}\in P_{5}(a_{1})$ for some $i\in[3]$. So by Claim~\ref{clm:10} (b,d),
(ii) $|Q(a_{1}x)|\leq1$, (iii) $b_{i}\notin Q(xy)\cup Q(yz)$, and
(iv) $xyz\nsubseteq N(b_{i})$. Thus by (iii) and Claim~\ref{clm:9} (a), 
$\left\Vert x',B\right\Vert \leq5$,
and so by (i), $\left\Vert z',B\right\Vert \geq2$. 
By Claim~\ref{clm:9} (a),
$|Q(x'y)|\geq2$ and $|Q(yz')|\geq1$. 
Claim~\ref{clm:10a}  then implies $|P_4(x')| \le 2$ 
meaning
(v) $4 \ge \left\Vert x',B\right\Vert \ge \left\Vert z',B\right\Vert \ge 3$
and, therefore, by Claim~\ref{clm:9} (a), (vi) $|Q(a_{1}x)|\geq1$. 
By Claim~\ref{clm:10} (b,c) $|P_{5}(a_{1})|\leq1$
and $|P_{4}(a_{1})|\leq2$. 
This implies (vii) $b_{i}\notin N(a_{1})$: Otherwise,
since $b_i \in P_5(a_1)$ implies $\|b_i, a_1\| \le 1$, 
there exist $h,j\in[3]-i$ with $\left\Vert b_{h},a_1\right\Vert =2$ and
$\left\Vert b_{i},a_1\right\Vert =1=\left\Vert b_j, a_{1}\right\Vert $. 
If $\left\Vert b_{i}b_{j}\right\Vert =1$ then $|P_{5}(a_{1})|=2$;
if $\left\Vert b_{i}b_{j}\right\Vert =2$ then $|P_{4}(a_{1})|=3$.
Either is a contradiction.

By (ii), (vi) and (vii), $Q(a_{1}x)=\{b_{j}\}$ for some $j\in[3]-i$,
and $\left\Vert a_{1},b_{h}b_{j}\right\Vert =4$, where $h=6-i-j$.
Thus $N(x)\cap B=\{b_{i},b_{j}\}$. By (iv), $z\notin N(b_{i})$,
and so $N(z)\cap B=\{b_{h},b_{j}\}$. So
\begin{align*}
\left\Vert yzb_{h}\right\Vert  & =\left\Vert z,B\right\Vert +\left\Vert y,zb_{h}\right\Vert -\left\Vert zb_{j}\right\Vert \geq3+3-2=4 & \mbox{ by (v)}\\
\left\Vert xb_{i}b_{j}\right\Vert  & =\left\Vert x,B\right\Vert +\left\Vert b_{i}b_{j}\right\Vert \geq3+1=4 & \mbox{by (v)}\\
\left\Vert yzb_{h}\right\Vert +\left\Vert xb_{i}b_{j}\right\Vert  & = 
\|\{x,z\}, B\| + \|y, zb_h\| - \|zb_j\| + \|b_ib_j\| \ge \left\Vert \{x,z\},B\right\Vert +2\geq9 & \mbox{by (i)}
\end{align*}
Thus $\{yzb_{h},xb_{i}b_{j},A\}$ is a factor of $M(A\cup B\cup W)$
with two $T_{5}$ and a $T_{4}$, a contradiction.
  \end{proof}
  
  \begin{claim}\label{clm:13}
    $\|\{a_{1},y\}, B\| \ge9$. 
  \end{claim}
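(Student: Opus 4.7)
The plan is to argue by contradiction: suppose $\|\{a_1, y\}, B\| \le 8$. Since $\|xz, B\| \le 12$ always, Claim~\ref{clm:8} forces $2\|\{a_1, y\}, B\| \ge 25 - 12 = 13$, so $\|\{a_1, y\}, B\| \ge 7$; hence only $\|\{a_1, y\}, B\| \in \{7, 8\}$ need be ruled out, and both cases give $\|xz, B\| \ge 9$.

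Assume without loss of generality $\|x, B\| \ge \|z, B\|$. The main tool is Claim~\ref{clm:10a}, which gives $P_4(v) \cap Q(e) = \emptyset$ for $(v, e) \in \{(x, yz), (z, xy)\}$, and hence $|P_4(v)| + |Q(e)| \le |B| = 3$. Combined with the bounds from Claim~\ref{clm:9}~(a,\,c) --- namely $|Q(e)| \ge (\|e, B\| - 6)/2$, and $|P_4(v)| = 3$ when $\|v, B\| \ge 5$ (else $|P_4(v)| \ge \|v, B\| - 2$) --- this yields upper bounds on $\|yz, B\|$ in terms of $\|x, B\|$ and on $\|xy, B\|$ in terms of $\|z, B\|$. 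The identity
\[
\|xy, B\| + \|yz, B\| = \|xz, B\| + 2\|y, B\|
\]
then converts these into an upper bound on $\|y, B\|$, and hence a lower bound on $\|a_1, B\| = \|\{a_1, y\}, B\| - \|y, B\|$, which should ultimately contradict Claim~\ref{clm:11}.

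A short case analysis on $(\|x, B\|, \|z, B\|)$ with $\|x, B\| + \|z, B\| \ge 9$ finishes the argument. For $\|\{a_1, y\}, B\| = 8$, the subcases $(6,3), (6,4), (6,5), (6,6), (5,4), (5,5)$ each yield $\|y, B\| \le 3$, so $\|a_1, B\| \ge 5$, contradicting Claim~\ref{clm:11}. For $\|\{a_1, y\}, B\| = 7$ we have $\|xz, B\| \ge 11$, forcing $\|x, B\| = 6$ and $\|z, B\| \ge 5$; then $|P_4(x)| = |P_4(z)| = 3$, so $Q(xy) = Q(yz) = \emptyset$, $\|xy, B\|, \|yz, B\| \le 6$, hence $\|y, B\| = 0$ and $\|a_1, B\| = 7$, impossible since $\mu(M) \le 2$ forces $\|a_1, B\| \le 6$.

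The only real obstacle is the bookkeeping across the handful of subcases: each follows the same template (Claim~\ref{clm:10a} combined with the degree identity), but with slightly different constants. No new structural ingredient beyond Claims~\ref{clm:9}--\ref{clm:11} is required, and in particular Claim~\ref{clm:10}~(b,\,c,\,d) play no role here --- the whole argument hinges on exploiting part~(a) at both endpoints $x$ and $z$ of the path simultaneously.
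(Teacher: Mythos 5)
Your argument is correct, but it follows a genuinely different route from the paper's. The paper disposes of this claim in two lines: assuming $\|\{a_1,y\},B\|\le 8$, Claim~\ref{clm:8} gives $\|xz,B\|\ge 9$, and since $\|a_1,B\|\le 6$ also $\|y,B\|\ge 1$; then Proposition~\ref{prop:heavy_path_5_4_triangles}, applied to the heavy path $xyz$ and the $5$-triangle $B$, factors $M[W\cup V(B)]$ into a $5$-triangle and a $4$-triangle, which together with $\mathcal{T}\smallsetminus\{B\}$ contradicts the assumption that $M$ is a counterexample. You instead avoid that proposition entirely and run the counting machinery: from Claim~\ref{clm:10a} at both endpoints you get $P_4(x)\cap Q(yz)=\emptyset$ and $P_4(z)\cap Q(xy)=\emptyset$, which with Claim~\ref{clm:9}~(a,c) bounds $\|yz,B\|$ and $\|xy,B\|$, and the identity $\|xy,B\|+\|yz,B\|=\|xz,B\|+2\|y,B\|$ converts this into $\|y,B\|\le 3$ (indeed, since $\|xz,B\|\ge 9$ forces $\max\{\|x,B\|,\|z,B\|\}\ge 5$, the other endpoint has degree at least $3$, so $\|y,B\|\le 6-3=3$), whence $\|a_1,B\|\ge 5$, contradicting Claim~\ref{clm:11} when $\|\{a_1,y\},B\|=8$, and forcing the impossible $\|a_1,B\|\ge 7$ when it equals $7$. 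I checked your case list and constants; they are complete and correct, and your observation that only part~(a) of Claim~\ref{clm:10} is needed is accurate. What the two approaches buy: the paper's proof is shorter and structural, and is precisely what Proposition~\ref{prop:heavy_path_5_4_triangles} was set up for (Claim~\ref{clm:13} is then played off against Claim~\ref{clm:12} to finish Case~0); your proof shows the same conclusion follows purely from the already-established Claims~\ref{clm:8}--\ref{clm:11}, at the cost of re-deriving by hand, through case bookkeeping, a configuration the proposition already packages.
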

  \begin{proof}
%
    Suppose $\|\{a_{1},y\}, B\| \le8$. Then
     $\|\{x,z\}, B\| \ge 9$ and $\|y, B\| \ge 1$.  
    Proposition \ref{prop:heavy_path_5_4_triangles} implies   
    there exist independent $4$- and $5$-triangles
    in $M[W \cup V(B)]$, a contradiction.
  \end{proof}
Observing that Claim~\ref{clm:12} contradicts Claim~\ref{clm:13}, completes the proof of Case~0.

  \bigskip
  \noindent\emph{Case 1}: $n\equiv1\pmod3$.  Choose any vertex $v\in V$, and
  set $M'=M-v$. By Case 0, $M'$, and so $M$, contains
  $\frac{n-4}{3}=\lfloor\frac{n-3}{3}\rfloor$ independent $5$-triangles and
  a $4$-triangle.

  \bigskip
  \noindent\emph{Case 2}: $n\equiv2\pmod3$. Add a new vertex $x$ together with all edges of the form $xv,v\in V$ to $M$ to get $M^+$. By Case 0, $M^+$ contains $\frac{n-2}{3}= \lfloor\frac{n-3}{3}\rfloor+1$ independent $5$-triangles
  and a $4$-triangle, at most one of them contains $x$. So $M$ contains
  $\lfloor\frac{n-3}{3}\rfloor$ independent $5$-triangles and a $4$-triangle.
\end{proof}

\section{Relaxed degree conditions \label{section:cyclic_triangles} }
\global\long\def\ab{\frac{1}{32}}
In this section we attack Conjecture~\ref{con1} and prove Corollary~\ref{cor:stability}. We rely on ideas from Levitt, S\'ark\"ozy and Semer\'edi  \cite{levitt2010avoid}.
\begin{example}
  Let $M$ be a underlying multigraph of the directed graph $\overrightarrow G$ from Example~\ref{ex:Wang}. Then $|M|=n=2k + 1$,  $V=V_1\cup V_2$, $V_1\cap V_2=\emptyset$, $|V_1|=k$, $|V_2|=k+1$,
  every pair $xy$  is an edge, and $xy$ is heavy if and only if $x,y\in V_1\vee x,y\in V_2$.  
  Since no $5$-triangle contains vertices of both $V_1$ and $V_2$, and
   $|V_1|$ is not divisible by $3$,
  $M$ does not have a $5$-triangle factor.
  Moreover,
  $ \delta(M) = 2(|V_1| - 1) + |V_2| = 3k - 1 = \frac{3n - 3}{2} - 1.$ Finally, notice that $E_H(V_1,V_2)=\emptyset$.
  \label{ex:5tri_counter}
\end{example}

Example~\ref{ex:5tri_counter} shows that Theorem~\ref{thm:stWang}
is tight but it also suggests that
requiring $H_M$ to be connected may allow us to
relax the degree condition.

\begin{conj}
  If $M$ is a standard multigraph on $n$ vertices
  where $n$ is divisible by $3$,
  $\delta(M) \ge \frac{4}{3}n - 1$ and $H_M$ is connected
  then $M$ has a $T_5$-factor.
  \label{conj:5tri}
\end{conj}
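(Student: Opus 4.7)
The plan is to deduce Conjecture~\ref{conj:5tri} from Theorem~\ref{th:main} together with an iterative swap argument that exploits the connectivity of $H_M$. First, apply Theorem~\ref{th:main} to obtain a tiling $\mathcal{T}=\{A\}\cup\mathcal{B}$ where $A=a_1a_2a_3$ is a $T_4$ (say $a_1a_2$ is the heavy edge) and $\mathcal{B}$ is a set of $k-1$ disjoint $T_5$'s covering the remaining $3k-3$ vertices. Over all such tilings, I would choose $\mathcal{T}$ to minimize some potential; for example, the length of a shortest path in $H_M$ from a vertex of $A$ to a $T_5$-tile $B\in\mathcal{B}$ for which a suitable swap is available.

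The second step is to prove a swap lemma of roughly the following flavor: if $B=b_1b_2b_3\in\mathcal{B}$ and there exists a heavy edge $a_ib_j$ together with enough ordinary $AB$-edges (quantitatively, one might hope that $\|A,B\|\ge 13$ together with one heavy edge suffices), then $M[A\cup B]$ admits a partition into two independent $T_5$'s. This would be proved by case analysis on the positions of the two heavy edges of $B$ and the heavy bridge $a_ib_j$, in the spirit of Propositions~\ref{prop:two_vertices_9}, \ref{prop:two_heavy_edges_9_7}, and \ref{prop:heavy_path_5_4_triangles}. An averaging argument from $\|A,V\smallsetminus A\|\ge 3(\tfrac{4n}{3}-1)-O(1)$ produces a tile $B$ with $\|A,B\|\ge 13$; if such a $B$ also has a heavy edge to $A$, we can eliminate the $T_4$ and finish.

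The hard part, and the reason the statement is only conjectured, is the case where no $T_5$-tile $B$ with heavy $AB$-edge and high $\|A,B\|$ exists. Here the connectedness of $H_M$ must be invoked: since $H_M$ is connected and $a_1a_2\in E(H_M)$, there is a path in $H_M$ from $A$ to a tile $B^*$ for which the swap lemma \emph{does} apply. The task is then to walk the $T_4$ along this path, tile by tile, via elementary moves that locally repartition two adjacent tiles so that the current $T_4$ is replaced by another $T_4$ whose heavy edge lies closer to $B^*$ in $H_M$. Making such a walking procedure succeed under the tight degree bound $\delta(M)\ge\frac{4n}{3}-1$ is the principal technical obstacle, since each elementary move must be guaranteed without any slack in the degree count. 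An asymptotic workaround via regularity plus absorption, following Levitt--S\'ark\"ozy--Szemer\'edi~\cite{levitt2010avoid}, is what the authors indicate yields Corollary~\ref{cor:stability}, and I would expect that such an absorption-based argument, combined with a stability analysis of the extremal configuration of Example~\ref{ex:5tri_counter}, could be pushed to a proof of the conjecture for sufficiently large $n$.
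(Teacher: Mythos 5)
You should be aware that the statement you were asked about is Conjecture~\ref{conj:5tri}: the paper does not prove it, and neither do you, so there is no paper proof to match your argument against. What the paper actually establishes in this direction is Theorem~\ref{thm:5tri_factor}, an asymptotic relative of the conjecture in which the exact bound $\frac{4}{3}n-1$ is relaxed to $\left(\frac{4}{3}+\varepsilon\right)n$ and connectivity of $H_M$ is strengthened to non-$\alpha$-splittability; its proof is by the absorbing method (the $k$-chains and $45$-vertex sponges of Section~4, following Levitt--S\'ark\"ozy--Szemer\'edi), not by the tile-swapping scheme you describe. Your last sentence correctly anticipates that this is the route the authors take for Corollary~\ref{cor:stability}, but note that even their absorption argument needs the $\varepsilon$ slack and the splittability hypothesis, and they only express the hope that it can be pushed to an exact result; they do not claim Conjecture~\ref{conj:5tri} itself.

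As a proof attempt, your proposal has two concrete gaps, both of which you acknowledge but neither of which is dispensable. First, the swap lemma is only wished for: ``$\|A,B\|\ge 13$ together with one heavy $AB$-edge'' is not known to force a partition of $M[A\cup B]$ into two $5$-triangles, and the analogous step in the paper (Theorem~\ref{thm:stWang}) needs $f(B)\ge 19$, which is only available under the much stronger bound $\delta(M)\ge\frac{3n-3}{2}$; under $\delta(M)\ge\frac{4}{3}n-1$ averaging gives you nothing close to that, and the structure of Example~\ref{ex:5tri_counter} shows how badly the edges can be distributed when the only extra hypothesis is on $H_M$. Second, the ``walk the $T_4$ along a path in $H_M$'' step is precisely the missing idea: each elementary move would have to be guaranteed with zero slack in the degree count, and you give no local lemma that performs even a single such move. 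So the proposal is an honest research plan whose two key lemmas are unproved; it identifies the difficulty correctly, but it does not constitute a proof, and it is not the approach the paper uses for the results it does prove.
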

As a side note, Conjecture~\ref{conj:5tri} implies both both Corollary~\ref{CHC} 
 and the following
Theorem of Enomoto, Kaneko and Tuza \cite{enomoto87}.
\begin{thm}
  If $G$ is a connected graph on $3k$ vertices and
  $\delta(G) \ge k$ then $G$ has a $k$ independent paths on $3$ vertices.
\end{thm}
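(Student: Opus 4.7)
The plan is to deduce this theorem from Conjecture~\ref{conj:5tri} via a simple degree-boosting construction. Given a connected graph $G$ on $n := 3k$ vertices with $\delta(G) \ge k$, I would build a standard multigraph $M$ on $V(G)$ by declaring $\mu_M(uv) = 2$ for every edge $uv \in E(G)$ and $\mu_M(uv) = 1$ for every non-edge $uv$. By construction $\mu(M) \le 2$, so $M$ is standard, and the heavy subgraph $H_M$ is exactly $G$, which is connected by hypothesis.

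Next I would check the degree bound. For every $v \in V$,
\[
d_M(v) \;=\; 2 d_G(v) + (n - 1 - d_G(v)) \;=\; d_G(v) + n - 1 \;\ge\; k + 3k - 1 \;=\; \tfrac{4}{3}n - 1,
\]
so the hypotheses of Conjecture~\ref{conj:5tri} are satisfied and $M$ admits a $T_5$-factor $\mathcal{T}$.

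Finally, I would translate $\mathcal{T}$ into a factor of $G$ by $P_3$'s. Each $T \in \mathcal{T}$ is a triangle on three vertices whose edge-multiplicities sum to $5$, so the multiset of multiplicities is forced to be $\{2,2,1\}$. The two heavy edges lie inside a $3$-vertex set, so they must share a common vertex, and therefore form a path on three vertices inside $G$. Since $\mathcal{T}$ is a factor of $M$, these $k$ paths are vertex-disjoint and cover $V(G)$, yielding the desired collection of $k$ independent $P_3$'s.

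The only real obstacle here is the conjecture itself: once Conjecture~\ref{conj:5tri} is in hand, the reduction above is routine. The Enomoto--Kaneko--Tuza theorem is of course already known unconditionally; the point of exhibiting this implication is to underscore how substantially stronger Conjecture~\ref{conj:5tri} is than previously known triangle-factor and $P_3$-factor results.
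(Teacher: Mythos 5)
The paper does not prove this theorem; it cites Enomoto, Kaneko and Tuza \cite{enomoto87} for the result, and only remarks as a side note that Conjecture~\ref{conj:5tri} implies it, without supplying the argument. Your proposal correctly fills in precisely that implication: the doubling construction gives a standard multigraph $M$ with $H_M = G$ connected, the degree arithmetic $d_M(v) = d_G(v) + n - 1 \ge 4k - 1 = \tfrac{4}{3}n - 1$ is right, and the observation that in a standard multigraph any $5$-triangle has multiplicity pattern $\{2,2,1\}$ --- hence its two heavy edges share a vertex and form a $P_3$ in $G$ --- translates the $T_5$-factor into a $P_3$-factor. So your argument matches the route the paper gestures at, and makes it explicit. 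The one thing to keep clearly in view (which you do flag) is that this is a conditional derivation from an open conjecture, not an unconditional proof of the Enomoto--Kaneko--Tuza theorem; the theorem itself rests on the original 1987 proof, and what you have demonstrated is the implication $\text{Conjecture~\ref{conj:5tri}} \Rightarrow \text{EKT}$, which is exactly what the paper's side note asserts without detail.
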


\begin{defn}
  For $\alpha \ge 0$ call a graph $G$ on $n$ vertices
  $\alpha$-splittable if there exists a partition
  $\{A, B\}$ of $V(G)$ such that 
  $|A|,|B| \ge \frac{n}{3}$ and
  $\left\Vert A, B\right\Vert_{G} \le \alpha n^2$.
\end{defn}

In this section, we will prove the following theorem, 
which supports Conjecture~\ref{conj:5tri}, and yields Corollary~\ref{cor:stability}.
\begin{thm}
  \label{thm:5tri_factor}
  For every $\varepsilon, \alpha > 0$ there exists
  $n_0 := n_0(\varepsilon, \alpha)$ such that 
  for every standard multigraph $M = (V, E)$ on $n \ge n_0$ vertices
  where $n$ is divisible by $3$ the following holds.
  If $\delta(M) \ge \left(\frac{4}{3} + \varepsilon\right)n$
  and $H_M$ is not $\alpha$-splittable
  then $M$ has a $5$-triangle factor.
\end{thm}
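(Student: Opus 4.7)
The plan is to use the absorbing method of R\"odl--Ruci\'nski--Szemer\'edi as adapted to tiling problems by Levitt--S\'ark\"ozy--Szemer\'edi. The proof has three phases: (1) produce a small absorbing set $A \subseteq V$; (2) almost-tile $V \setminus A$ by $5$-triangles; (3) use $A$ to absorb the leftover vertices into a $T_5$-factor.

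\textbf{Phase 1 (Absorbing lemma).} Fix $\eta > 0$ much smaller than $\varepsilon$ and $\alpha$. I would show that $M$ contains a set $A \subseteq V$ with $|A| \le \eta n$ and $|A| \equiv 0 \pmod 3$ such that $M[A]$ has a $T_5$-factor, and moreover for \emph{every} $L \subseteq V \setminus A$ with $|L| \le \eta^2 n$ and $|L| \equiv 0 \pmod 3$, $M[A \cup L]$ has a $T_5$-factor. Call $\{a,b,c\} \subseteq V \setminus \{x,y,z\}$ an \emph{absorber} for the triple $\{x,y,z\}$ if $M[\{a,b,c\}]$ is itself a $T_5$ and $M[\{x,y,z,a,b,c\}]$ admits a $T_5$-factor into two triples with a different vertex partition (e.g.\ $\{x,y,a\} \cup \{z,b,c\}$, or any permutation thereof). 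The crucial counting claim is that every triple $\{x,y,z\}$ has $\Omega(n^3)$ absorbers, where the hidden constant depends only on $\varepsilon, \alpha$. Once this is established, including each potential absorber independently with an appropriate probability and then deleting overlapping pairs produces a candidate $A_0$ of size $O(\eta n)$; standard concentration arguments show that with positive probability $A_0$ absorbs every triple. Small divisibility adjustments (including making $|A_0|$ a multiple of $3$ and ensuring $M[A_0]$ itself has a $T_5$-factor) are handled by appending a constant number of disjoint $T_5$'s from outside $A_0$.

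\textbf{Phase 2 (Near-perfect tiling).} Apply the Szemer\'edi regularity lemma to $G_M$, refining the partition so that both the heavy-edge graph and the light-edge graph of each pair are regular. Let $R$ be the reduced multigraph on clusters $[t]$ where $\mu_R(i,j) = 2$ if the pair $(V_i, V_j)$ is heavy-dense and regular, $\mu_R(i,j) = 1$ if it is light-dense and regular but not heavy-dense, and $\mu_R(i,j) = 0$ otherwise. Standard calculations yield $\delta(R) \ge \left(\tfrac{4}{3} + \tfrac{\varepsilon}{2}\right) t$, and the non-$\alpha$-splittability of $H_M$ transfers to non-$(\alpha/2)$-splittability of $H_R$ (both conditions being robust under passage to a regular partition). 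I would then prove a reduced-multigraph analogue of Theorem~\ref{thm:stWang} for non-splittable $H_R$: these hypotheses force an almost-$T_5$-factor of $R$ (all but constantly many clusters covered). This then lifts, via cluster-balancing and repeated application of the Key Lemma inside regular pairs, to a $T_5$-tiling of $V \setminus A$ covering all but at most $\eta^2 n$ vertices. \textbf{Phase 3} combines the two: the leftover $L := V \setminus (A \cup \bigcup \text{tiling})$ satisfies $|L| \le \eta^2 n$ and $|L| \equiv 0 \pmod 3$, so the absorbing property of $A$ completes a $T_5$-factor of $M$.

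The main obstacle is the counting claim in Phase 1: that every triple $\{x,y,z\}$ has $\Omega(n^3)$ absorbers. Building an absorber requires specific edges between $\{x,y,z\}$ and $\{a,b,c\}$ to be heavy; a priori, heavy edges could conceivably cluster far from $\{x,y,z\}$, leaving no valid $(a,b,c)$. The non-$\alpha$-splittability of $H_M$ is exactly what rules this out, since it guarantees that every sufficiently large vertex subset sends $\Omega(n^2)$ heavy edges into its complement, and more locally, that most vertices have $\Omega(n)$ heavy neighbors in any large set. Combining this with the total-degree bound $\delta(M) \ge (\tfrac{4}{3}+\varepsilon)n$ should let me, for each of the constantly many permutations of the absorbing pattern, choose $\{a,b,c\}$ from the heavy-neighborhoods of $x,y,z$ and then verify the remaining light-edge conditions by the degree bound; summing over patterns yields the $\Omega(n^3)$ lower bound. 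The reduced-multigraph stability statement needed in Phase 2 is arguably the second hard step, but it is essentially Theorem~\ref{thm:stWang} plus a short case analysis governed by non-splittability.
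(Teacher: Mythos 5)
Your overall plan (absorb + near-perfect tile + finish) is the right shape, and indeed it is the shape the paper follows. But there is a real gap in Phase~1, and Phase~2 is considerably more roundabout than necessary.

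On Phase~1: you propose $3$-vertex absorbers $\{a,b,c\}$ and assert that every triple $\{x,y,z\}$ has $\Omega(n^3)$ of them. You acknowledge this is the hard step but claim non-$\alpha$-splittability should carry it. It doesn't, at least not in the form you argue. Non-$\alpha$-splittability is a \emph{global} condition on $H_M$; it does not give the \emph{local} statement that ``most vertices have $\Omega(n)$ heavy neighbours in any large set.'' To build a $3$-vertex absorber for $\{x,y,z\}$ you must split $\{x,y,z,a,b,c\}$ into two $T_5$'s, and whichever partition you use, one triple contains at least two of $x,y,z$, so it needs either $x_iy_j$ heavy or a vertex in $N_H(x_i)\cap N_H(x_j)$. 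Since $\delta(H_M)\ge(\tfrac13+\varepsilon)n$ but nothing larger, $|N_H(x_i)\cap N_H(x_j)|$ can be as small as $2\varepsilon n$ for one pair and essentially zero for another; there is no reason all three pairwise intersections (or the relevant heavy-common-neighbour sets after fixing $a$) have size $\Omega(n)$. This is exactly why the paper does \emph{not} use $3$-vertex absorbers. It instead builds $45$-vertex ``sponges'': a $T_5$ whose three vertices are each linked to $x_1,x_2,x_3$ by $5$-chains of heavy edges, where a $k$-chain is a path-like structure of $k$ heavy edges joined through common-neighbour ``link'' vertices. The decisive lemma (Claim~1 in the paper's proof) is that every vertex is $5$-joined to every other, i.e.\ there are $(\sigma n)^{14}$ connecting $5$-chains for each pair. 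Its proof is where non-splittability is actually used: if $y$ were not $5$-joined to $x$, then $L_2(x)$ and $L_2(y)\smallsetminus L_2(x)$ would give an $\alpha$-splitting of $H_M$. Your proposal has no analogue of this chain/connectivity machinery, and without it the $\Omega(n^3)$ absorber count does not follow.

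On Phase~2: you invoke the regularity lemma, build a reduced multigraph, prove a stability version of Theorem~\ref{thm:stWang} for it, and lift back. This is not wrong in principle, but it is far heavier than what the paper does, and it quietly introduces a second nontrivial stability lemma you have not proved. Because the sponge set $A$ is tiny ($|A|\le\tfrac{\varepsilon}{2}n$) and its removal costs at most $2|A|$ degree per vertex, $M-A$ still satisfies the Ore-free hypothesis $\delta(M-A)\ge\tfrac{4|M-A|-3}{3}$, so Theorem~\ref{th:main} applies directly and produces a factor of $M-A$ into one $4$-triangle $X$ and $5$-triangles. Then a single sponge for $X$ finishes the proof. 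So no regularity and no new stability lemma are needed once Theorem~\ref{th:main} is in hand; the whole point of the elementary Section~3 is to make Phase~2 immediate.
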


Before the proof, 
we will first collect a few simple facts and definitions that will
be used throughout, and show that Theorem~\ref{thm:5tri_factor} implies Corollary~\ref{cor:stability}.

Let $\varepsilon > 0$ and 
let $M = (V, E)$ be standard multigraph on $n$ vertices
such that $\delta(M) \ge \left(\frac{4}{3} + \varepsilon\right)n$.
Note that for every $u \in V$
\begin{equation*}
  |N(u)| \ge \left(\frac{2}{3} + \frac{\varepsilon}{2}\right)n
  \text{ and }
  |N_H(u)| \ge \left(\frac{1}{3} + \varepsilon\right)n.
  \label{eq:relax_degree_cond}
\end{equation*}

For any $U \subseteq V$ and $k \ge 1$ define
$Q_k(U) := \{ v \in V: \|v, U\| \ge k\}$. 
For any $e \in E$,
\begin{equation*}
  2\left(\frac{4}{3} + \varepsilon\right)n \le \|e, V\| 
  \le |Q_4(e)| + 3|Q_3(e)| + 2|\overline{Q_3(e)}|
  = |Q_4(e)| + |Q_3(e)| + 2n.
\end{equation*}
Therefore, 
\begin{equation}
  |Q_4(e)| + |Q_3(e)| \ge \left(\frac{2}{3} + 2\varepsilon\right)n,
  \label{eq:q_4_plus_q_3}
\end{equation}
and since $Q_4(e) \subseteq Q_3(e)$,
\begin{equation}
  |Q_3(e)| \ge \left(\frac{1}{3} + \varepsilon\right)n.
  \label{eq:q_3}
\end{equation}

For any $u \in V$, let 
$F(u) := \{e \in E_H : u \in Q_3(e) \}$.
Note that 
  $$2|F(u)| \ge \sum_{v \in N_H(u)} |N(u) \cap N_H(v)|,$$ 
and for every $v \in N_H(u)$,
$|N(u) \cap N_H(v)| \ge 
\left(\frac{2}{3} + \frac{\varepsilon}{2} \right)n + 
\left(\frac{1}{3} + \varepsilon\right)n - n
= \frac{3 \varepsilon}{2}n$.
So 
\begin{equation}
  |F(u)| \ge 
  \frac{1}{2}
  \left(\frac{1}{3} + \varepsilon\right) \frac{3\varepsilon}{2}n^2
  > \frac{\varepsilon}{4} n^2.
  \label{eq:f_u}
\end{equation}


We are now ready to prove Corollary~\ref{cor:stability}. 
 At the same time we will prove the following corollary showing that Conjecture~\ref{conj:5tri} implies Conjecture~\ref{con1}.
 
 \begin{cor} \label{cor:CiC}
 If every standard multigraph $M$ 
  with  $|M|=3k$, 
  $\delta(M) \ge 4k - 1$, and $H_M$ connected
   has a $T_5$-factor, then every directed graph $\overrightarrow G$ with $|\overrightarrow G|=3k$ and $\delta_0(\overrightarrow G)\ge2k$ has a $\overrightarrow C_3$-factor. 
 \end{cor}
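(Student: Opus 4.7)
Let $M$ be the underlying multigraph of $\overrightarrow G$, so $|M|=3k$ and $\delta(M)\ge 2\delta_0(\overrightarrow G)\ge 4k$. As noted in the introduction, every $5$-triangle in $M$ yields a $\overrightarrow C_3$ on the same three vertices in $\overrightarrow G$, so a $T_5$-factor of $M$ gives a $\overrightarrow C_3$-factor of $\overrightarrow G$. If $H_M$ is connected the hypothesis applies directly, so assume $H_M$ is disconnected.

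For any component $C$ of $H_M$ and any $v\in C$, only light edges can leave $C$, so $4k\le d_M(v)\le 2(|C|-1)+(3k-|C|)=|C|+3k-2$, forcing $|C|\ge k+2$. Since $\sum_C|C|=3k$ there are exactly two components $V_1,V_2$, with $a:=|V_1|$ and $b:=|V_2|$ in $[k+2,2k-2]$. The same count gives $\delta(M[V_i])\ge k+|V_i|$; moreover $|N_H(v)|\ge d_M(v)-|N(v)|\ge k+1$ for each $v$, and since all heavy neighbors of $v\in V_i$ lie in $V_i$ we get $\delta(H_M[V_i])\ge k+1>|V_i|/2$.

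If $a\equiv 0\pmod 3$, then $b\equiv 0\pmod 3$ as well. Each $M[V_i]$ has $|V_i|$ divisible by $3$, $H_M[V_i]$ connected (being a component of $H_M$), and $\delta(M[V_i])\ge k+|V_i|\ge \frac{4|V_i|}{3}-1$ since $|V_i|\le 2k-2$. Applying the assumed hypothesis to each $M[V_i]$ and taking the union of the two $T_5$-factors finishes the proof.

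Otherwise, by symmetry assume $a\equiv 1$ and $b\equiv 2\pmod 3$. Using $\delta_0(\overrightarrow G)\ge 2k$ and the absence of heavy edges between $V_1$ and $V_2$, for every $u\in V_1$ each of $|N^+(u)\cap V_2|$ and $|N^-(u)\cap V_2|$ is at least $2k-(a-1)\ge 3$, and for every $v\in V_2$ each of $|N^+(v)\cap V_2|$ and $|N^-(v)\cap V_2|$ is at least $k+1$; an inclusion--exclusion computation in $V_2$ then produces a cyclic triangle $u\,v_1\,v_2$ of $\overrightarrow G$ with $u\in V_1$ and distinct $v_1,v_2\in V_2$. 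Removing its three vertices leaves sides of sizes $a-1$ and $b-2$, both divisible by $3$, and a direct calculation using $a,b\le 2k-2$ shows that the residual minimum degrees still satisfy the hypothesis's $\frac{4n-3}{3}$ bound. Since $\delta(H_M[V_i])>|V_i|/2$, Dirac's theorem applied to the residual heavy-edge graphs shows they remain Hamiltonian, hence connected. The hypothesis therefore applies to each residual side, and adjoining $\{u,v_1,v_2\}$ to the resulting $T_5$-factors gives the desired $\overrightarrow C_3$-factor of $\overrightarrow G$. The main obstacle is verifying that the single cross-triangle removal can be made so as to preserve divisibility, minimum degree, and $H$-connectivity on \emph{both} residual sides simultaneously; this is enabled by the narrow window $k+2\le a,b\le 2k-2$ forced by the component-size lower bound.
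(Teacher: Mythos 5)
Your proposal is correct, and it follows the paper's skeleton for most of the way: both arguments dispose of the case $H_M$ connected by applying the hypothesis directly, and in the disconnected case both exploit that every vertex has at least $k+1$ heavy neighbors, all inside its own component, so that there are exactly two components $V_1,V_2$ with $k+2\le|V_1|,|V_2|\le 2k-2$ and no heavy cross edges; both then repair divisibility (when $|V_1|\equiv1$, $|V_2|\equiv2\pmod 3$) by extracting a single cyclic triangle with one vertex in $V_1$ and two in $V_2$, and your inclusion--exclusion indeed produces it: pick $v_1\in N^+(u)\cap V_2$, and since $|N^+(v_1)\cap V_2|\ge k+1$ (heavy neighbors) while $|N^-(u)\cap V_2|\ge 2k-a+1$, the two sets meet inside $V_2$ because $(k+1)+(2k-a+1)>b$. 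Where you genuinely diverge is the finish on the two sides: the paper checks $\delta_t(\overrightarrow G[V_i''])\ge\frac{3}{2}(|V_i''|-1)$ and invokes Wang's theorem (Theorem~\ref{thm:Wang}) to tile each side with cyclic triangles, whereas you re-invoke the assumed multigraph statement on each side after verifying the $\frac{4m-3}{3}$ bound and connectivity of the residual heavy graphs (your degree arithmetic and the Dirac/half-order argument do check out, since at most two vertices are deleted from each side). Your route avoids Wang's theorem altogether, but it uses the hypothesis on multigraphs of order smaller than $3k$; strictly speaking it therefore proves that Conjecture~\ref{conj:5tri}, quantified over all orders divisible by three, implies Conjecture~\ref{con1}, rather than the literal fixed-$k$ implication, which the paper's proof does deliver because in the disconnected case it never touches the hypothesis. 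Since the corollary is introduced exactly as ``Conjecture~\ref{conj:5tri} implies Conjecture~\ref{con1}'', this is a difference in strength rather than an error, but you should say explicitly that you apply the hypothesis at the smaller orders $|V_i|$ and $|V_i''|$.
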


\begin{proof}[Proof of Corollaries~\ref{cor:stability} and \ref{cor:CiC}]
For Corollary~\ref{cor:stability} we are given $\varepsilon$ 
and 
 set $\alpha := \frac{\varepsilon}{4}$; 
 for Corollary~\ref{cor:CiC} set $\alpha, \varepsilon:=0$.
 Let
  $M$ be the underlying multigraph of $\overrightarrow G$.
  Note that $\delta(M) \ge \left(\frac{4}{3} + 2\varepsilon \right)n$, and
  $\delta(H_M)\ge (\frac{1}{3}+2\varepsilon)n$.
  If $H := H_M$ is not $\alpha^2$-splittable then
  Corollary~\ref{cor:stability} follows from Theorem~\ref{thm:5tri_factor}.
  Moreover, if $\alpha=0$ then $H_M$ is connected---not $0$-splittable implies 
  connected---and so 
  Conjecture~\ref{conj:5tri} implies Conjecture~\ref{con1}.
  So assume $H$ is $\alpha^2$-splittable, but
   $\overrightarrow G$ does not have a $\overrightarrow C_3$-factor. 


  Partition 
  $V := V(H)$ as $\{A, B\}$
  so that
  $|A|, |B| \ge \frac{n}{3}$ and $e_H(A, B) \le \alpha^2 n^2$.
  Set 
  \begin{equation*}
    Z := \{v \in V : |E_H(v) \cap E_H(A, B)| > 2 \alpha n\}
  \end{equation*}
  and note that $|Z| \le \alpha n$.
  For every $z \in Z$, $|N_H(z)| \ge \left(\frac{1}{3} + 8\alpha\right)n$ 
  so we can find a matching $K$ in $H$ such
  that $Z \subseteq \bigcup{K}$.
  By \eqref{eq:q_3} each $e \in K$ satisfies 
    $Q_3(e) \ge \left(\frac{1}{3} + \varepsilon\right)n$; 
   so we can select $x_e \in (V \smallsetminus \bigcup{K}) \cap Q_3(e)$ 
   so that $Y := \bigcup_{e \in K} ex_e$ is a collection
  of disjoint $5$-triangles. 
  Let
  $V' := V \smallsetminus Y$, 
  $A' := A \smallsetminus Y$ and $B' := B \smallsetminus Y$.
  Then  $|Y| \le 3 |Z| \le 3\alpha n$ and $|E_H(v) \cap E_H(A', B')| \le 2
  \alpha n$ for every $v \in V'$. So 
  \begin{align}\label{V'}
  \text{(i) } \delta_0(\overrightarrow G[V']) &\ge \left(\frac{2}{3} + \alpha\right)n,\text{~(ii) }  
   \delta(H[V']) \ge \left(\frac{1}{3} + 5\alpha\right) n, \text{~and}\\ 
   \text{ (iii) }\delta(H[C']) &\ge \left(\frac{1}{3} +
   3\alpha\right)n \text{ for }C'\in \{A',B'\}.\notag
   \end{align}
  In particular, (\ref{V'}iii) implies $\frac{n}{3} < |A'|,|B'| < \frac{2n}{3}$.

  If $|A'| \equiv |B'| \equiv 0 \pmod{3}$ then let $A'' := A'$ and $B'' := B'$.
  If not, since $|A' \cup B'|$ is divisible by $3$, 
  without loss of generality we can assume that $|A'| \equiv 1 \pmod{3}$
  and $|B'| \equiv 2 \pmod 3$.  
  Let $v \in B'$.   Since $|B'| < \frac{2n}{3}$ there exists 
  $u \in N^+(v) \cap A'$. By (\ref{V'}i,iii) $$|N^+(u)\cap (N_H(v)\cap B')|\ge
  \left(\frac{2}{3}+\alpha\right)n+\left(\frac{1}{3} + 3 \alpha\right)n-(n-1)>0.$$
  So there exists $w\in B'-v$ with $T:=uvw=\overrightarrow C_3$. Let
  $A'' := A' \smallsetminus V(T)=A'-v$ and $B'' := B' \smallsetminus V(T)=B'-u-v$.
  In either case, 
  $2(|A'| - 2|B' \smallsetminus B''| + 4) > \frac{2n}{3} > |B''| - 1$, so 
  \begin{align*} 
    \delta_{t}\left(\overrightarrow G[B'']\right) &\ge 
    2\delta_0(\overrightarrow G[V'])-(\|v,A'\|_{G_M}+\|v,A'\|_{H_M})-\|v,B'\smallsetminus B''\|_M\\
    &\ge
    2 \left(\frac{2}{3} +  \alpha \right)n - \left(|A'| + 2 \alpha n\right) - 
    2|B' \smallsetminus B''|  
    &&\text{(by (\ref{V'}i)})\\
   &= 
   \frac{4}{3}\left(n - |A'| - |B' \smallsetminus B''| - 1\right) + 
    \frac{1}{3}\left(|A'| - 2|B' \smallsetminus B''| + 4\right)\\
    &> \frac{3}{2}\left(|B''| - 1\right).
  \end{align*}
  A similarly calculation gives that
  $\delta_{t}(\overrightarrow G[A'']) \ge \frac{3}{2}(|A''| - 1)$.
  Hence, by Theorem~\ref{thm:Wang}, 
  both $\overrightarrow G[A'']$ and $\overrightarrow G[B'']$ have cyclic triangle factors.
\end{proof}

\begin{proof}[Proof of Theorem~\ref{thm:5tri_factor}.]

  This proof uses the probabilistic absorbing method 
  as in \cite{levitt2010avoid}.
  Let $0 < \sigma < \min\{\frac{\varepsilon}{12}, \frac{\sqrt{\alpha}}{16}\}$
  and $\tau := \frac{\sigma^{45}}{4}$ and assume throughout
  that $n$ is sufficiently large.  Let $H := H_M$.

  \begin{defn}
    For any disjoint $X,Y \subseteq V$, we will say that 
    $Y$ \emph{absorbs} $X$ if
    $M[Y]$ and $M[Y \cup X]$ both have $5$-triangle factors.
    For any $Z := (z_1, \dotsc, z_{45}) \in V^{45}$
    let $V(Z) := \{z_1, \dotsc, z_{45}\}$.
    For any $X\in\binom{V}{3}$, 
    call $Z$ an $X$-\emph{sponge} 
    when $|V(Z)| = 45$
    and $V(Z)$ absorbs $X$, 
    and let $\mathcal{A}_{X}$ be the set of $X$-sponges.
    Two sponges $Z,Z'$ are \emph{disjoint} if $V(Z)$ and $V(Z')$ are disjoint.
    For any collection of sponges $\mathcal{A}$ let
    $V(\mathcal{A}) := \bigcup_{Z \in \mathcal{A}}V(Z)$.
  \end{defn}

  \begin{defn}
    For $k > 0$ the tuple $(z_1, \dotsc, z_{3k-1}) \in V^{3k-1}$ 
    is a \emph{$k$-chain} if
    \begin{enumerate}[label=(\alph*),nolistsep, ref={\theclaim~(\alph*)}]
      \item $z_1, \dotsc, z_{3k-1}$ are distinct vertices,
      \item $z_{3i-2}z_{3i-1}$ is a heavy edge for $1 \le i \le k$, and
      \item $z_{3i} \in Q_3(z_{3i-2}z_{3i-1}) \cap Q_3(z_{3i+1}z_{3i+2})$ 
	for $1 \le i \le k - 1$.
    \end{enumerate}
    For $u, v \in V$ if $u \in  Q_3(z_1z_2)$ and 
    $v \in Q_3(z_{3i-2}z_{3i-1})$ for some $1 \le i \le k$
    and $u, v \notin \{z_1, \dotsc, z_{3k-1}\}$
    then we say that the $k$-chain \emph{joins $u$ and $v$} 
    (see Figure~\ref{fig:5chain}).
    For $k > 0$, if there are at least $(\sigma n)^{3k-1}$ $k$-chains
    that join $u$ and $v$ we say that
    \emph{$u$ is $k$-joined with $v$}.
  \end{defn}

\begin{figure}[htb]
  \centering 

  \global\long\def\svgwidth{400pt}
  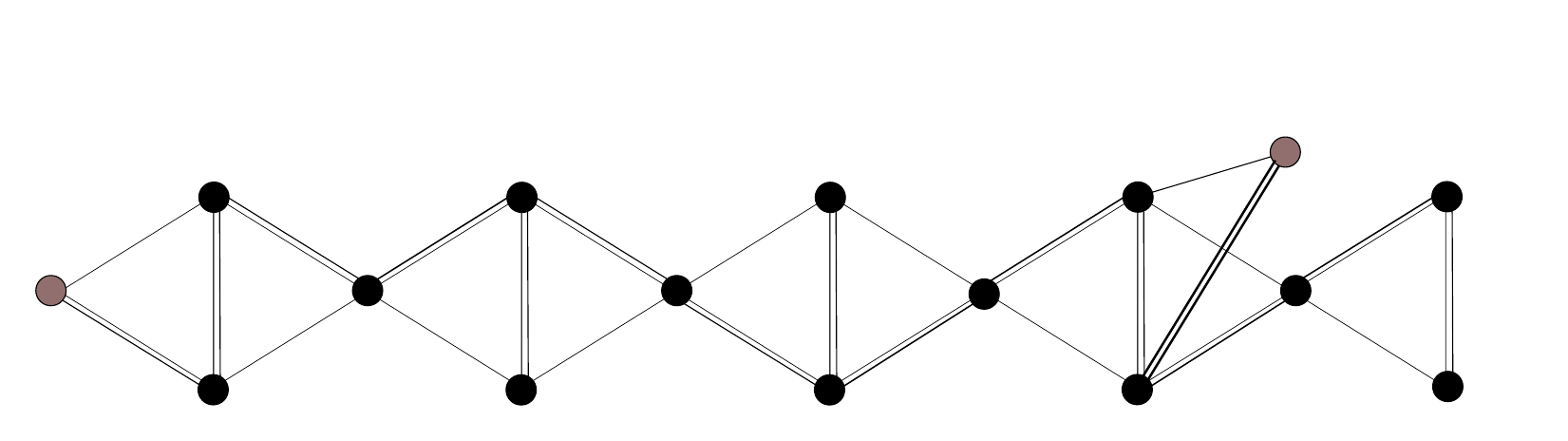 \caption{The $5$-chain $(z_1, \dotsc, z_{14})$ joins $u$ and $v$.}

  \label{fig:5chain} 
\end{figure}
  Note that for $1 \le i < k \le 5$ 
  if $u$ is $i$-joined with $v$ then 
  $u$ is $k$-joined with $v$.
  Indeed, using \eqref{eq:q_3} and \eqref{eq:f_u}, 
  we can extend any $i$-chain that joins $u$ and $v$
  by iteratively picking a vertex $z_{3j} \in Q_3(z_{3j - 2}, z_{3j - 1})$
  and then a heavy edge $z_{3j+1}z_{3j+1} \in F(z_{3j})$
  that avoids the vertices $\{u, v, z_1, \dotsc, z_{3j - 1}\}$
  for $j$ from $i+1$ to $k$ in at least $(\sigma n)^{3j}$ ways.
  For any $u \in V$ define
  \begin{equation*}
    L_k(u) := 
    \{v \in V : \text{$v$ is $k$-joined with $u$ for some $1 \le i \le k$}\}.
  \end{equation*}
  Note that, by the previous comment,
  $L_1(u) \subseteq \dotsm \subseteq L_5(u)$.

  Let $\{x_1, x_2, x_3\} := X \in \binom{V}{3}$,
  $Y := (z_1, \dotsc, z_{45}) \in V^{45}$, and define 
   $m(i) := 15(i-1)$.
  It is not hard to see that $Y \in \mathcal{A}_X$
  if $Y$ satisfies the following for $i \in [3]$:
  \begin{itemize}[nolistsep]
    \item
      the vertices $z_1, \dotsc, z_{45}$ are distinct,
    \item
      $M[\{z_{m(1) + 1}, z_{m(2) + 1}, z_{m(3) + 1}\}]$ is a $5$-triangle, and
    \item  
      $(z_{m(i) + 2}, \dotsc, z_{m(i) + 15})$ is a $5$-chain that
      joins $z_{m(i) + 1}$ and $x_i$.
  \end{itemize}

  Our plan is to show (i) there is a small set $\mathcal{A}$ of disjoint
  sponges such that for all $3$-sets $X\in\binom{V}{3}$ there exists
  an $X$-sponge $Y\in\mathcal{A}$, and (ii) there exists a $3$-set
  $X\subseteq V\smallsetminus V(\mathcal{A})$ such that $M-(X\cup
  V(\mathcal{A}))$
  has a $5$-triangle factor. Since there exists an
  $X$-sponge in $\mathcal{A}$, this will imply that $M$ has a $5$- 
  triangle factor.

  To prove (i) we first show that every $3$-set is absorbed by a positive
  fraction of all $45$-tuples, and then use Chernoff's inequality to find
  \emph{$\mathcal{A}$}. The following claim is our main tool.

  \setcounter{claim}{0}
  
  \begin{claim}
    $L_5(x) = V$ for every vertex $x \in V$.
    \label{lemma:connecting_edges_5_triangles}
  \end{claim}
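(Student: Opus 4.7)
The plan is to prove the claim by induction on $k$: I would show $|L_1(x)| \ge c_1 n$ for some $c_1 = c_1(\varepsilon) > 0$, and then that for each $1 \le k \le 4$, either $L_k(x) = V$ already or $|L_{k+1}(x) \setminus L_k(x)| \ge c_2 n$ for some $c_2 = c_2(\varepsilon, \alpha) > 0$. After four iterations this forces $L_5(x) = V$. The two main tools are the counting estimates \eqref{eq:q_3}, \eqref{eq:q_4_plus_q_3}, \eqref{eq:f_u}, and the hypothesis that $H := H_M$ is not $\alpha$-splittable, which forbids sparse cuts in $H$ when both sides of the cut have size at least $\frac{n}{3}$.

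For the base case I would double-count triples $(v, z_1, z_2)$ where $z_1z_2 \in E_H$, $x \in Q_3(z_1z_2)$, and $v \in Q_3(z_1z_2) \setminus \{x, z_1, z_2\}$. By \eqref{eq:f_u} and \eqref{eq:q_3} the number of such triples is at least $2|F(x)| \cdot ((\tfrac{1}{3}+\varepsilon)n - 3) = \Omega(\varepsilon^2 n^3)$. Each $v \notin L_1(x)$ contributes fewer than $(\sigma n)^2$ triples while each $v \in L_1(x)$ contributes at most $2|F(x)| \le n^2$, so a routine averaging argument yields $|L_1(x)| \ge c_1 n$.

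For the inductive step, fix $k \in [4]$ and suppose $L_k(x) \ne V$. Each $k$-chain from $x$ extends to a $(k+1)$-chain by appending a connector $z_{3k} \in Q_3(z_{3k-2}z_{3k-1})$ and a heavy edge $z_{3k+1}z_{3k+2} \in F(z_{3k})$; the extended chain then joins $x$ with every $v' \in Q_3(z_{3k+1}z_{3k+2})$ disjoint from the earlier chain vertices. Totaling these extensions over all $k$-chains from $x$, I would show that if $|L_{k+1}(x) \setminus L_k(x)| < c_2 n$, then summing the deficient contributions of all $v' \notin L_{k+1}(x)$ bounds the number of heavy edges between a ``reachable'' set $R \subseteq V$ (essentially the vertices appearing as final connectors in many $k$-chains from $x$) and $V \setminus L_{k+1}(x)$ by at most $\alpha n^2$. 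Using $\delta(H) \ge (\tfrac{1}{3}+\varepsilon)n$ and $|L_1(x)| \ge c_1 n$, both sides of the resulting partition can be padded by an $o(n)$ correction to ensure each part has size at least $\frac{n}{3}$, which $\alpha$-splits $H$ and contradicts the hypothesis.

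Iterating the growth step four times from $|L_1(x)| \ge c_1 n$ yields $L_5(x) = V$. The main obstacle is the bookkeeping in the growth step: one must precisely relate the count of valid extensions near each candidate $v'$ to the heavy-edge structure between $v'$ and $R$, handle the lower-order ``forbidden-vertex'' corrections from the requirement that the $3k+2$ chain vertices be distinct, and extract from stagnation an honest partition of $V$ whose two parts both have size at least $\frac{n}{3}$, as required by the definition of $\alpha$-splittable.
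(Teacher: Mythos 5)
Your plan has a genuine gap, and it stems from working one-sided (tracking only $L_k(x)$) instead of symmetrizing as the paper does.

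First, the ``growth or split'' dichotomy cannot be iterated the way you describe. If at each stage you only establish $|L_{k+1}(x)\setminus L_k(x)| \ge c_2 n$ for a constant $c_2 = c_2(\varepsilon,\alpha)$, then four iterations from $|L_1(x)| \ge c_1 n$ give only $|L_5(x)| \ge (c_1 + 4c_2)n$, which need not equal $n$. Nothing in your argument forces $c_1 + 4c_2 \ge 1$; in fact, from the double-counting base case one only gets $c_1$ slightly above $\frac{1}{3}$, so you would need $c_2 \ge \frac{1}{6}$ or so, and nothing suggests the splittability-stagnation step yields growth that large.

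Second, and more fundamentally, the splittability contradiction you invoke has a dead zone. The definition of $\alpha$-splittable requires \emph{both} parts of the partition to have size at least $\frac{n}{3}$. If at some stage $|L_{k}(x)| > \frac{2n}{3}$ (which can certainly happen after one or two iterations), then $|V\setminus L_{k}(x)| < \frac{n}{3}$ and no ``$o(n)$ padding'' can fix that: you simply cannot invoke non-splittability against a cut where one side is below $\frac{n}{3}$. Your proposal offers no alternative mechanism to push $L_k(x)$ past this range up to $V$, and within the dead zone the $k$-chain extension argument alone gives no reason for further growth.

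The paper's proof sidesteps both issues by symmetrizing. It first establishes a composition lemma: if $|L_i(u)\cap L_j(v)|\ge 2\sigma n$ with $i,j\le 2$ then $v\in L_{i+j}(u)$, obtained by splicing an $i$-chain from $u$ and a $j$-chain from $v$ at a common endpoint. Given $y\notin L_5(x)$, it argues that $L_2(x)\cup L_2(y)=V$ --- otherwise a third vertex $z$ outside both would yield three sets $L_1(x), L_1(y), L_1(z)$, each of size more than $\frac{n}{3}$, with all pairwise intersections below $2\sigma n$, a contradiction by inclusion--exclusion. It then sets $X:=L_2(x)$ and $Y:=L_2(y)\setminus L_2(x)$ and shows $|X|,|Y|\ge(\frac{1}{3}+\frac{\varepsilon}{6})n$. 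This is exactly where symmetrization pays off: both $X$ and $Y$ are ``$L_2$-sets'' with the base-case lower bound built in, so the cut $(X,Y)$ genuinely has both sides above $\frac{n}{3}$ no matter how the mass is distributed. Non-$\alpha$-splittability then gives $|E_H(X,Y)|\ge\alpha n^2$; each such heavy edge $e$ is shown to support $\ge(\sigma n)^2$ ``connecting $4$-tuples'' (using $Q_4(e)$, $Q_5$ of a $6$-triangle, or $Q_3(e)\cap X$ vs.\ $Q_3(e)\cap Y$, split into cases), and prepending a $2$-chain from $x$ and appending a $2$-chain from $y$ turns each connecting $4$-tuple into a $5$-chain joining $x$ and $y$, contradicting $y\notin L_5(x)$. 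There is no iteration and no dead zone: the five rounds of the chain are spent as $2+1+2$, two from each endpoint plus a single bridge through a heavy cross-edge. To repair your proposal you would essentially need to import this bilateral structure; the one-sided growth bookkeeping you flag as the ``main obstacle'' is not just tedious but unclosable as stated.
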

  \begin{proof}
    We will first show that, for every $u \in V$,
    \begin{equation*}
      |L_1(u)| \ge 
      \left(\frac{1}{3} + \frac{\varepsilon}{3}\right)n \text{ and }
      u \in L_1(u).
    \end{equation*}
    By \eqref{eq:f_u}, $|F(u)| \ge (\alpha n)^2$, so $u \in L_1(u)$.
    Let 
    $t := \sum_{e \in F(u)} |Q_3(e)|$.
    By \eqref{eq:q_3}, 
    $t \ge |F(u)|\left(\frac{1}{3} + \varepsilon\right)n$.
    If $v \notin L_1(u)$ then
    there are less than 
    $(\sigma n)^2 < \varepsilon (\alpha n)^2 \le \varepsilon |F(u)|$ edges $e \in F(u)$
    for which $v \in Q_3(e)$.
    Therefore,
    \begin{equation*}
      |F(u)|\left(\frac{1}{3} + \varepsilon \right)n \le 
      t < 
      |F(u)||L_1(u)| + \varepsilon|F(u)||\overline{L_1(u)}|
      \le \varepsilon|F(u)| n + 
      \left(1 - \varepsilon\right)|F(u)||L_1(u)|,
    \end{equation*}
    and $|L_1(u)| > 
    \frac{n}{3} \cdot (1 - \varepsilon)^{-1}
    > \left(\frac{1}{3} + \frac{\varepsilon}{3} \right)n$. 

    Note that for any $u, v \in V$ 
    if $|L_i(u) \cap L_j(v)| \ge 2 \sigma n$ and
    $1 \le i,j \le 2$ then $v \in L_{i+j}(u)$.
    Indeed, we can pick $w \in L_i(u) \cap L_j(v)$ in one
    of $2 \sigma n$ ways and 
    we can then pick 
    an $i$-chain $(u_1, \dotsc, u_{3i - 1})$ that joins $u$ and $w$
    and a $j$-chain $(v_1, \dotsc, v_{3i - 1})$ that joins $v$ and $w$
    so that $u, u_1, \dotsc, u_i, w, v_j, \dotsc, v_1$ and $v$
    are all distinct in $\frac{1}{2} (\sigma n)^{3(i + j)- 2}$ ways.
    Since $(u_1, \dotsc, u_i, w, v_j, \dotsc, v_1)$ is a $(i+j)$-chain
    that joins $u$ and $v$ and there are
    $(\sigma n)^{3(i + j)-1}$ such $2$-chains, $v \in L_2(u)$.

    Let $x \in V$ and suppose, by way of contradiction,
    that there exists $y \in V$ such that $y \notin L_5(x)$.
    If there exists $z \notin L_2(x) \cup L_2(y)$,
    from the preceding argument,
    we have $|L_1(u) \cap L_1(v)| < 2\sigma n$ for any 
    distinct $u, v \in \{x, y, z\}$.
    But this is a contradiction, because
    $3\left(\frac{1}{3} + \frac{\varepsilon}{3}\right)n - 3(2 \sigma) n > n$.
    Therefore, if we let $X := L_2(x)$ and $Y := L_2(y) \smallsetminus L_2(x)$,
    $\{X, Y\}$ is a partition of $V$.
    We have that 
    $|X| \ge |L_1(x)| \ge \left( \frac{1}{3} + \frac{\varepsilon}{3}\right)n$
    and, since $y \notin L_4(x)$, 
    $|L_2(y) \cap L_2(x)| < 2\sigma n$
    so 
    $|Y| \ge |L_1(y)| - 2 \sigma n \ge
    \left( \frac{1}{3} + \frac{\varepsilon}{6}\right)n$

    Call a $4$-tuple $(v_1, v_2, v_3, v_4)$ \emph{connecting}
    if $v_1 \in X$ and $v_4 \in Y$, $v_2v_3 \in E_H$
    and $v_1,v_4 \in Q_3(v_2v_3)$.
    Since $M$ is not $\alpha$-splittable, $|E_H(X, Y)| \ge \alpha n^2$.
    Pick some $e := x'y' \in E_H(X, Y)$ where $x' \in X$ and $y' \in Y$.
    We will show that there are at 
    least $(\sigma n)^2$
    connecting $4$-tuples which contain $x'$ and $y'$.
    Since $M[{v_1, v_2, v_3, v_4}]$ can contain at most 
    $4$ edges from $E_H(X, Y)$, this will imply that
    there are at least
    $\frac{1}{4} \cdot \alpha n^2 \cdot (\sigma n)^2 \ge 4(\sigma n)^4$
    connecting $4$-tuples and
    this will prove that $y \in L_5(x)$, a contradiction.
    Indeed, 
    select a connecting $4$-tuple $(v_1, v_2, v_3, v_4)$ in 
    $4 (\sigma n)^4$ ways.
    Since $v_1$ is $2$-joined with $x$ there
    are at least $\frac{1}{2}(\sigma n)^{5}$
    $2$-chains that join $x$ and $v_1$
    and avoid $\{v_1, v_2, v_3, v_4\}$.
    Similarly,
    there are $\frac{1}{2}(\sigma n)^{5}$ $2$-chains
    that join $v_4$ and $y$ and avoid all previously selected vertices. 
    Therefore, there are
    at least $(\sigma n)^{14}$ $5$-chains that join $x$ and $y$.
    So, by way of contradiction, assume there
    are less than $(\sigma n)^2$ connecting $4$-tuples containing $e$. 

    Suppose $|Q_4(e)| \ge \sigma n$ and pick $z \in Q_4(e)$
    and let $T := \{x', y', z\}$.
    Note that $M[T]$ is a $6$-triangle and that
    \begin{equation*}
      3 \left(\frac{4}{3} + \varepsilon\right)n \le \|T, V\|
      \le 6|Q_5(T)| + 4|\overline{Q_5(T)}| = 
      2|Q_5(T)| + 4n
    \end{equation*}
    so $|Q_5(T)| \ge \frac{3}{2} \varepsilon n$.
    Pick $w \in Q_5(T)$.
    Note that there are
    at least  $\sigma n \cdot \frac{3}{2} \varepsilon n \ge (\sigma n)^2$ 
    choices for the pair $(z, w)$
    and that if $w \in X$
    then $(w, x', z, y')$ is a connecting $4$-tuple
    and if $w \in Y$ then $(x', z, y', w)$ is a connecting $4$-tuple.
    Therefore, we can assume $|Q_4(e)| < \sigma n$ which, 
    by \eqref{eq:q_4_plus_q_3},
    implies that $|Q_3(e)| \ge \left(\frac{2}{3} + \varepsilon\right)n$.

    For any $v_1 \in Q_3(e) \cap X$ and $v_4 \in Q_3(e) \cap Y$,
    $(v_1, x', y', v_4)$ is a connecting $4$-tuple.
    Therefore, we cannot have
    $|Q_3(e) \cap X| \ge \sigma n$ and $|Q_3(e) \cap Y| \ge \sigma n$.
    So suppose $|Q_3(e) \cap X| < \sigma n$.
    Then $|Y| \ge |Q_3(e) \cap Y| > \frac{2n}{3}$ 
    which contradicts the fact that $|X| > \frac{n}{3}$.
    Since a similar argument holds when $|Q_3(e) \cap Y| < \sigma n$,
    the proof is complete.
  \end{proof}


  \begin{claim}
    For every $X\in\binom{V}{3}$, $|\mathcal{A}_{X}|\ge 4\tau n^{45}$.
    \label{lemma:absorbing_tuples}
  \end{claim}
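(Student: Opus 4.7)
The plan is to lower-bound $|\mathcal{A}_X|$ by counting only those $45$-tuples $Y = (z_1,\dotsc,z_{45}) \in V^{45}$ that satisfy the three explicit sufficient conditions listed just above the claim statement; any such $Y$ is automatically an $X$-sponge. I will build $Y$ in two stages: first choose the ordered triple of anchors $(z_1, z_{16}, z_{31})$, then greedily pick the three $5$-chains $(z_{m(i)+2}, \dotsc, z_{m(i)+15})$ joining $z_{m(i)+1}$ and $x_i$ for $i = 1, 2, 3$.

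For the anchor count I will use \eqref{eq:f_u}: each vertex $u$ satisfies $|F(u)| \ge \varepsilon n^2/4$, and every $(u,e)$ with $e \in F(u)$ identifies a triangle on $\{u\} \cup e$ of total edge count at least $5$, which in particular contains a $T_5$ on those three vertices. Summing over $u$ and dividing by the (bounded) multiplicity with which any single such triangle is counted gives at least $c_1 n^3$ ordered triples of distinct vertices in $V \smallsetminus X$ whose induced submultigraph has at least five edges, for some constant $c_1 = c_1(\varepsilon) > 0$; all of these are legal choices of anchor triple, since the induced $T_5$ is the only thing needed when the $X$-sponge is later factored into $T_5$'s.

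For the chains I will invoke Claim~\ref{lemma:connecting_edges_5_triangles}: for every ordered pair $(u,v)$, there are at least $(\sigma n)^{14}$ $5$-chains joining $u$ and $v$. When I pick the $i$-th chain, the set of forbidden vertices (the three anchors, $X$, and the chain vertices chosen in earlier stages) has constant size at most $45$, and each forbidden vertex appears in at most $14 n^{13}$ of the joining chains. So for $n$ sufficiently large at least $\tfrac12(\sigma n)^{14}$ valid chains survive at each of the three stages. Multiplying the counts yields
\[
|\mathcal{A}_X| \;\ge\; c_1 n^3 \cdot \Bigl(\tfrac12(\sigma n)^{14}\Bigr)^3 \;=\; \frac{c_1 \sigma^{42}}{8}\, n^{45},
\]
and this exceeds $4\tau n^{45} = \sigma^{45} n^{45}$ provided $\sigma^3 \le c_1/32$, which is guaranteed by the standing choice $\sigma < \min\{\varepsilon/12, \sqrt{\alpha}/16\}$ together with $n$ being large. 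The only real concern is bookkeeping in the anchor count, but this is a routine application of \eqref{eq:f_u}; the deeper long-range connectivity of $M$ through $5$-chains has already been packaged in Claim~\ref{lemma:connecting_edges_5_triangles}, so no further structural work is needed.
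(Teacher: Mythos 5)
Your proposal is correct and follows essentially the same route as the paper: both lower-bound $|\mathcal{A}_X|$ by counting tuples built from an anchor $5$-triangle (a heavy edge together with a vertex of $Q_3$ of that edge, which is exactly what your $\sum_u|F(u)|$ count via \eqref{eq:f_u} produces) and then three $5$-chains supplied by Claim~\ref{lemma:connecting_edges_5_triangles}, discarding the $O(n^{13})$ chains per stage that hit forbidden vertices. The only difference is cosmetic bookkeeping: the paper counts the anchors directly as $\tfrac13 n^2$ heavy edges times $\tfrac13 n$ vertices of $Q_3(e)\smallsetminus X$, giving the explicit constant $\tfrac1{72}\sigma^{42}\ge 4\tau$, whereas you leave an unspecified $c_1(\varepsilon)$, which still comfortably dominates $\sigma^{45}$ under the standing choice of $\sigma$.
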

  \begin{proof}
    Recall that $m(i) := 15(i - 1)$ and
    let $\{x_1, x_2, x_3\} := X$.
    Pick $v_{m(1) + 1}v_{m(2) + 1} := e$ 
    from one of the at least $\frac{1}{3}n^2$ edges in $H - X$.
    By \eqref{eq:q_3}, we can pick $v_{m(3) + 1}$ 
    from one of the more than $\frac{1}{3}n$ 
    vertices in $Q_3(e) \smallsetminus X$. 
    We have that $M[\{v_{m(1) + 1},v_{m(2) + 1},v_{m(3) + 1}]$
    is a $5$-triangle.
    For $i \in [3]$, pick
    a $5$-chain $(v_{m(i) + 2}, \dotsc, v_{m(i)+15})$ that joins
    $v_{m(i) + 1}$ and $x_i$ in one of 
    $\frac{1}{2}(\sigma n)^{14}$ ways.
    Note that $(v_1, \dotsc, v_{45}) \in \mathcal{A}_X$ and
    there are at least
    $\frac{1}{72} \sigma^{42} n^{45} \ge 4 \tau n^{45}$ such tuples.
  \end{proof}

  The next part of the proof is probabilistic. The tools we require
  are the union bound, the linearity of expectation, Markov's inequality
  and the following theorem of Chernoff:
  \begin{thm}
    Let $X$ be a random variable with binomial distribution. Then the
    following hold for any $t\ge0$: 
    \begin{enumerate}[label=(\alph*),nolistsep, ref={\theclaim~(\alph*)}]
      \item $\Pr[X\ge\E[X]+t]\le\exp\left(-\frac{t^{2}}{2(\E[X]+t/3)}\right)$; 
      \item $\Pr[X\leq\E[X]-t]\le\exp\left(-\frac{t^{2}}{2\E[X]}\right)$. 
    \end{enumerate}
    \label{chernoff} 
  \end{thm}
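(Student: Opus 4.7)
The plan is to apply Chernoff's classical exponential moment method. Write $X = \sum_{i=1}^{n} X_i$ as a sum of independent Bernoulli variables with common mean $p$, so $\mu := \E[X] = np$; if $\mu = 0$ both bounds are trivial, so assume $\mu > 0$. For any $\lambda > 0$, Markov's inequality applied to the monotone function $e^{\lambda x}$ gives
\[
\Pr[X \ge \mu + t] \;=\; \Pr\bigl[e^{\lambda X} \ge e^{\lambda(\mu + t)}\bigr] \;\le\; e^{-\lambda(\mu + t)}\,\E[e^{\lambda X}],
\]
and analogously $\Pr[X \le \mu - t] \le e^{\lambda(\mu - t)}\,\E[e^{-\lambda X}]$ (and for (b) I may assume $0 \le t \le \mu$, since otherwise the left-hand side is already $0$). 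By independence, $\E[e^{\lambda X}] = (1 - p + p e^{\lambda})^{n}$, and the elementary estimate $1 + y \le e^{y}$ applied with $y = p(e^{\pm \lambda} - 1)$ gives $\E[e^{\pm \lambda X}] \le \exp(\mu(e^{\pm \lambda} - 1))$.

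For (a) this reduces to $\Pr[X \ge \mu + t] \le \exp\bigl(\mu(e^{\lambda} - 1) - \lambda(\mu + t)\bigr)$; the optimal choice $\lambda := \ln(1 + t/\mu)$ yields the classical form $\Pr[X \ge \mu + t] \le \exp(-\mu\,h(t/\mu))$, where $h(x) := (1 + x)\ln(1 + x) - x$. The stated bound now reduces to the analytic inequality $h(x) \ge x^{2}/(2 + 2x/3)$ for $x \ge 0$: setting $x = t/\mu$ and multiplying the exponent by $\mu$ recovers exactly $-t^{2}/(2(\mu + t/3))$. For (b), the analogous optimization $\lambda := -\ln(1 - t/\mu)$ gives $\Pr[X \le \mu - t] \le \exp(-\mu\,g(t/\mu))$ with $g(x) := (1 - x)\ln(1 - x) + x$, and the statement follows from $g(x) \ge x^{2}/2$ on $[0,1]$.

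The remaining substance of the proof is the two one-variable calculus lemmas. The bound $g(x) \ge x^{2}/2$ is immediate because $g(0) = g'(0) = 0$ and $g''(x) = 1/(1 - x) \ge 1$ on $[0, 1)$. The bound $h(x) \ge x^{2}/(2 + 2x/3)$ is more delicate and is the main obstacle in an otherwise mechanical derivation; a clean route is to define $\phi(x) := (2 + 2x/3)h(x) - x^{2}$ and check $\phi(0) = \phi'(0) = 0$ together with $\phi''(x) \ge 0$ for $x \ge 0$ by direct differentiation, or equivalently to integrate the pointwise inequality $\ln(1 + u) \ge 2u/(2 + u)$ against the representation $h(x) = \int_{0}^{x}\ln(1 + u)\,du$. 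Neither part of the argument uses any property of $X$ beyond the factorization of the moment generating function, so the same proof applies more generally to sums of independent $[0,1]$-valued random variables, which is the form actually needed in the sequel.
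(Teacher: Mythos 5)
Your proof is correct, but note that the paper does not prove this statement at all: it is quoted as ``the following theorem of Chernoff'' and used as a black-box tool, with only a citation to Chernoff's 1952 paper. Your argument is the standard exponential-moment derivation: Markov's inequality applied to $e^{\pm\lambda X}$, the bound $\E[e^{\pm\lambda X}]\le\exp\bigl(\mu(e^{\pm\lambda}-1)\bigr)$, optimization of $\lambda$ to get the Bennett-type exponents $\mu h(t/\mu)$ and $\mu g(t/\mu)$, and then the two calculus facts $h(x)\ge x^{2}/(2+2x/3)$ and $g(x)\ge x^{2}/2$. I checked the delicate one: with $\phi(x):=(2+2x/3)h(x)-x^{2}$ one has $\phi(0)=\phi'(0)=\phi''(0)=0$ and $\phi'''(x)=\frac{4x}{3(1+x)^{2}}\ge0$, so $\phi\ge0$ as you claim; your parenthetical alternative via $\ln(1+u)\ge\frac{2u}{2+u}$ also works, though it yields $h(x)\ge 2x-4\ln(1+x/2)$ and still needs the (easy, it reduces to $x^{2}\ge0$ after differentiating) comparison of that quantity with $x^{2}/(2+2x/3)$, so it is not quite a one-liner. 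Your closing remark that the argument extends to sums of independent $[0,1]$-valued variables is true but not needed here, since the paper only applies the inequality to genuinely binomial random variables ($|\mathcal F|$ and $|\mathcal A_X\cap\mathcal F|$ in Lemma~\ref{lemma:existence_of_the_absorbing_structure}).
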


  The next lemma completes step (i) of the proof.
  \begin{lem}
    There exists a set $\mathcal{A}$
    of disjoint sponges such that 
    $|\mathcal{A}|\leq \frac{\varepsilon}{90} n$
    and for every $3$-set $X$ there exists an $X$-sponge in $\mathcal{A}.$
    \label{lemma:existence_of_the_absorbing_structure}
  \end{lem}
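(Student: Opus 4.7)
The plan is the standard probabilistic absorbing method. First, I would sample a random collection of candidate sponges from $V^{45}$; second, I would use concentration to ensure every $3$-set $X$ is absorbed by many of them while the total number of pairwise overlaps is small; third, I would delete one sponge from each overlapping pair, leaving a pairwise disjoint family $\mathcal{A}$ that still absorbs every $X$.

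Concretely, fix a large constant $C = C(\varepsilon, \alpha)$ and set $p := \tau/(C n^{44})$. Let $\mathcal{A}_0$ be obtained by independently including each $45$-tuple $Z \in V^{45}$ with $|V(Z)| = 45$ with probability $p$, and let $I$ count the unordered pairs $\{Z, Z'\} \subseteq \mathcal{A}_0$ with $V(Z) \cap V(Z') \neq \emptyset$. By linearity of expectation and Claim~\ref{lemma:absorbing_tuples},
\[
\E[|\mathcal{A}_0|] \le \tau n / C, \qquad \E[|\mathcal{A}_0 \cap \mathcal{A}_X|] \ge 4 \tau^2 n / C \text{ for every } X \in \binom{V}{3}, \qquad \E[I] = O(\tau^2 n / C^2),
\]
the bound on $\E[I]$ coming from first choosing a sponge ($n^{45}$ ways), then a shared vertex ($45$ ways), then a position in and the remaining coordinates of a second sponge ($45 \cdot n^{44}$ ways). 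The essential feature is that coverage is linear in $p$ while overlaps are quadratic in $p$, so enlarging $C$ kills the overlaps without losing coverage.

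Next, I would apply Theorem~\ref{chernoff}(a) to $|\mathcal{A}_0|$ and Theorem~\ref{chernoff}(b) to each $|\mathcal{A}_0 \cap \mathcal{A}_X|$, taking a union bound over the $\binom{n}{3}$ choices of $X$, together with Markov's inequality for $I$. Since $\tau > 0$ depends only on $\varepsilon$ and $\alpha$, each Chernoff tail is of order $\exp(-\Omega(n))$, which easily defeats the $n^3$ union-bound factor once $n \ge n_0$. For $C$ sufficiently large, this yields, with positive probability, a realization in which simultaneously $|\mathcal{A}_0| \le \frac{\varepsilon}{90} n$, every $|\mathcal{A}_0 \cap \mathcal{A}_X| \ge 2 \tau^2 n / C$, and $I \le \tau^2 n / C$. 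I would then form $\mathcal{A}$ by deleting one sponge from each overlapping pair, removing at most $I$ sponges; the resulting family is pairwise disjoint, satisfies $|\mathcal{A}| \le \frac{\varepsilon}{90} n$, and for every $X$ retains at least $2 \tau^2 n / C - I > 0$ $X$-sponges.

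The main obstacle is the simultaneous calibration of $C$: it must be large enough that (i) the Chernoff tails beat the $n^3$ union bound, (ii) Markov's bound on $I$ leaves room for $I \ll \E[|\mathcal{A}_0 \cap \mathcal{A}_X|]$, and (iii) the cardinality bound $|\mathcal{A}| \le \frac{\varepsilon}{90} n$ is met. All three are achievable by a single constant $C$ precisely because of the linear-versus-quadratic scaling of coverage and overlap in $p$ noted above, so no further structural information about $M$ beyond Claim~\ref{lemma:absorbing_tuples} is needed.
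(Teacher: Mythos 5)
Your proposal follows essentially the same route as the paper: sample random $45$-tuples with a suitably small probability (the paper uses $\rho n^{-44}$ with $\rho=\tau/4000$, you use $\tau/(Cn^{44})$ for a large constant $C$ — the same thing parameterized), apply Chernoff to lower-bound every $|\mathcal{A}_X\cap\mathcal{F}|$ and upper-bound $|\mathcal{F}|$, apply Markov to the number of overlapping pairs, and then repair by deletion; the linear-versus-quadratic scaling in $p$ you highlight is exactly what makes the paper's choice of $\rho$ work. The only cosmetic discrepancy is that, after deleting overlaps, you should also discard any surviving $45$-tuple that is not a sponge for some $X$ (the paper does this explicitly in its definition of $\mathcal{A}$), since the lemma requires $\mathcal{A}$ to consist of sponges and the downstream argument uses that $M[V(Z')]$ has a $5$-triangle factor for every $Z'\in\mathcal{A}$.
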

  \begin{proof}
    Let $\mathcal{F}$ be a set of $45$-tuples chosen by picking each
    $45$-tuple $Y\in V^{45}$ randomly and independently with probability
    $\rho n^{-44}$ where 
    $\rho := \frac{\tau}{4 \cdot 10^3}$. 
    Let $\varepsilon' := \varepsilon/90$ and note that $\rho < \varepsilon'$.
    Clearly $\E[|\mathcal{F}|]=\rho n$, and, by Claim
    \ref{lemma:absorbing_tuples}, 
    $\E[|\mathcal{F}\cap\mathcal{A}_{X}|]\ge 4\tau\rho n$ 
    for each $X\in\binom{V}{3}$. 
    Let $\mathcal{O}$ be the set of pairs
    of overlapping tuples in $V^{45}$, that is 
    \begin{equation*}
      \mathcal{O}=\left\{
	\{\left(x_{1},\dotsc,x_{45}\right),\left(y_{1},\dotsc,y_{45}\right)\}\in\binom{V^{45}}{2}:x_{i}=y_{j}\text{
	for some }i,j\in\{1,\dotsc,45\}\right\} ;
      \end{equation*}
      note that 
      $|\mathcal{O}|\le\frac{1}{2}(n\cdot45\cdot45\cdot n^{88}) < 2000 n^{89}$.
      For any $P\in\mathcal{O}$, $\Pr[P\subseteq\mathcal{F}]=(\rho n^{-44})^{2},$
      so if 
      $\mathcal{O}_{\mathcal{F}} :=
      \{P\in\mathcal{O}:P\subseteq\mathcal{F}\}$
      then, by the linearity of expectation,  
      \begin{equation*}
	\E[|\mathcal{O}_{\mathcal{F}}|]\le
	\sum_{P\in\mathcal{O}}\Pr[P\subseteq\mathcal{F}] <
	2000 n^{89}\cdot\rho^{2}n^{-88} = \frac{\tau \rho}{2} n.
      \end{equation*}

      By Theorem~\ref{chernoff}, 
      \begin{equation*}
	\Pr[|\mathcal{F}|\ge\varepsilon' n]=\Pr[|\mathcal{F}|\ge\E[|\mathcal{F}|]+(\varepsilon'-\rho)n]\le\exp\left(-\frac{3(\varepsilon'-\rho)^{2}n}{2(2\rho+\varepsilon')}\right)
      \end{equation*}
      and, for every $X\in\binom{V}{3}$, 
      \begin{equation*}
	\Pr[|\mathcal{A}_{X}\cap\mathcal{F}|\le 3 \tau \rho n]=
	\Pr[|\mathcal{A}_{X}\cap\mathcal{F}|\le\E[X] - \tau \rho
      n]\le\exp\left(-\frac{(\tau \rho )^2n}{8 \tau\rho}\right)
    \end{equation*}
    Since by Markov's inequality we have that 
    $\Pr[|\mathcal{O}_{\mathcal{F}}|\ge \tau \rho n]\le\frac{1}{2}$,
    if $n$ is large enough
    \begin{equation*}
      \Pr[|\mathcal{F}|\ge\varepsilon' n]+
      \sum_{X\in\binom{V}{3}}{\Pr[|\mathcal{A}_{X}\cap\mathcal{F}|\le 3 \tau\rho
      n]}+\Pr[|\mathcal{O}_{\mathcal{F}}|\ge \tau \rho n]<1.
    \end{equation*}
    Therefore, by the union bound, we can fix $\mathcal{F} \subseteq V^{45}$
    so that, $|\mathcal{F}|<\varepsilon' n$, $|\mathcal{A}_{X}\cap\mathcal{F}|> 3 \tau \rho n$
    for every $X\in\binom{V}{3}$, and $|\mathcal{O}_{\mathcal{F}}|<\tau \rho n$.

    Note that 
    $|\bigcup\mathcal{O}_{\mathcal{F}}| \le 
    2|\mathcal{O}_{\mathcal{F}}| \le 2 \tau \rho n$ and let 
    \begin{equation*}
      \mathcal{A}:=\left\{ 
	T\in\mathcal{F}\smallsetminus\left(\bigcup\mathcal{O}_{\mathcal{F}}\right)
	:T\in\mathcal{A}_{X}\text{ for some }
	X\in\binom{V}{3}\right\}.
    \end{equation*}
      For every $Z\in\mathcal{A}$ 
      there exists $X\in\binom{V}{3}$, 
      such that $Z \in \mathcal{A}_X$,
      so $\mathcal{A}$ is a collection of sponges.
      Also, note that for distinct 
      $Z,Z'\in\mathcal{A}$, $\{Z,Z'\}\notin\mathcal{O}$,
      that is, the sponges in $\mathcal{A}$ are disjoint. Furthermore, 
      for any $X \in \binom{V}{3}$,
      $|\mathcal{A}_{X}\cap\mathcal{A}|\ge
      |\mathcal{A}_{X}\cap\mathcal{F}|-|\bigcup \mathcal{O}_{\mathcal{F}}|\ge
      \left \lceil \tau \rho n \right \rceil \ge1$.
    \end{proof}

    Let $\mathcal{A}$ be the set of $45$-tuples guaranteed by
    Lemma~\ref{lemma:existence_of_the_absorbing_structure}
    and let $A:=V(\mathcal{A})$.
    Let $M' = M - A$. 
    By Theorem~\ref{th:main},
    because $|A| \le 45 \varepsilon'n \le \frac{\varepsilon}{2} n$,
    there is a $5$-triangle packing of $M' - X$ 
    where $X$ is a $3$-set of $V \smallsetminus A$.
    There exists
    $Z \in \mathcal{A}_{X}\cap\mathcal{A}$.
    By the definition of an $X$-sponge there is a $5$-triangle
    factor of $M[X \cup V(Z)]$ and, 
    since every tuple in $\mathcal{A}$ is a sponge,
    there is a $5$-triangle factor
    of $M[A \smallsetminus V(Z)]$. This completes the proof.
  \end{proof}

  \hbadness 10000\relax

  \end{document}